\documentclass[12pt]{amsart}
\usepackage[utf8]{inputenc}

\usepackage{graphicx}

\usepackage{amsmath}
\usepackage{amsthm}
\usepackage{amssymb}
\usepackage{amsfonts}

\usepackage{tikz}
\usetikzlibrary{decorations.markings}
\usepackage{ifthen}


\newtheorem{theorem}{Theorem}[section]
\newtheorem{proposition}[theorem]{Proposition}
\newtheorem{lemma}[theorem]{Lemma}
\newtheorem{corollary}[theorem]{Corollary}

\theoremstyle{definition}
\newtheorem{definition}[theorem]{Definition}

\newtheorem{remark}[theorem]{Remark}

\DeclareMathOperator{\Star}{Star}
\DeclareMathOperator{\Aut}{Aut}
\DeclareMathOperator{\Epi}{Epi}
\DeclareMathOperator{\Stab}{Stab}
\DeclareMathOperator{\Ker}{Ker}
\DeclareMathOperator{\Sch}{Sch}
\DeclareMathOperator{\Cay}{Cay}
\DeclareMathOperator{\diam}{diam}

\DeclareMathOperator{\Cof}{Cof}
\DeclareMathOperator{\Comp}{Comp}

\newcommand{\N}{\mathbb{N}}
\newcommand{\Z}{\mathbb{Z}}
\newcommand{\Zd}{\Z/d\Z}

\newcommand{\B}{\mathcal{B}}

\title{SCHREIER GRAPHS OF SPINAL GROUPS}
\author{Tatiana Nagnibeda, Aitor Pérez}
\date{}

\setlength{\emergencystretch}{10pt}

\begin{document}

\begin{abstract}
We study Schreier dynamical systems associated with a vast family of groups that hosts many known examples of groups of intermediate growth. We are interested in the orbital graphs for the actions of these groups on $d-$regular rooted trees and on their boundaries, viewed as topological spaces or as spaces with measure. They form interesting families of finitely ramified graphs, and we study their combinatorics, their isomorphism classes and their geometric properties, such as growth and the number of ends.
\end{abstract}

\maketitle

\tikzset{midarrow>/.style={decoration={markings, mark=at position 0.5 with {\arrow{>}}}, postaction={decorate}}}

\tikzset{
	pics/Loop/.style n args={4}{
		code = {
			\begin{scope}[shift={#1}, rotate=#2]
				\draw[color=#4] (0,0) .. controls (#3/2, #3) and (-#3/2, #3) .. (0,0);
				\draw[fill=black] (0,0) circle (#3/32);
			\end{scope}
		}	
	}
}

\tikzset{
	pics/Line/.style n args={4}{
		code = {
			\begin{scope}[shift={#1}, rotate=#2]
				\draw[color=#4] (0,0) -- (0: #3);
			\end{scope}
		}	
	}
}

\tikzset{
	pics/Triangle/.style n args={4}{
		code = {
			\begin{scope}[shift={#1}, rotate=#2]
				\draw[color=#4] (0,0) -- (0: #3) -- (60: #3) -- cycle;
				\draw[fill=black] (0,0) circle (#3/32);
				\draw[fill=black] (0: #3) circle (#3/32);
				\draw[fill=black] (60: #3) circle (#3/32);
			\end{scope}
		}	
	}
}

\tikzset{
	pics/Pentagon/.style n args={4}{
		code = {
			\begin{scope}[shift={#1}, rotate=#2]
				\draw[color=#4] (0,0) -- (#3, 0) -- (1.2*#3, -0.9*#3) -- (0.5*#3, -1.5*#3) -- (-0.2*#3, -0.9*#3) -- cycle;
				\draw[fill=black] (0,0) circle (#3/32);
				\draw[fill=black] (0: #3) circle (#3/32);
				\draw[fill=black] (1.2*#3, -0.9*#3) circle (#3/32);
				\draw[fill=black] (0.5*#3, -1.5*#3) circle (#3/32);
				\draw[fill=black] (-0.2*#3, -0.9*#3) circle (#3/32);
			\end{scope}
		}	
	}
}

\tikzset{
	pics/Gamma/.style ={
		code = {
			\draw (0,0) .. controls (\r, -\r) and (2*\r, -\r) .. (3*\r,0);
			\draw (0,0) .. controls (\r, \r) and (2*\r, \r) .. (3*\r,0);
			\draw (1.5*\r,0) node {\tiny $\Gamma_{#1}$};
		}
	}
}

\section{Introduction}
\label{sec:introduction}

To a finitely generated group $G$ and a finite generating set $S$ one naturally associates the Cayley graph $\Cay(G,S)$. Cayley graphs of infinite finitely generated groups have long been employed as powerful tools in geometric group theory and also studied in their own right as interesting examples of vertex-transitive graphs. One can think about Cayley graphs as representing the action of $G$ on itself by left multiplication. More recently, similar graphical representations of other actions of discrete groups, on a topological space, or on a space with measure, have gained attention. Given a dynamical system $(G, \mathcal X)$ where $\mathcal X$ is a topological space and $G$ acts on $\mathcal X$ by homeomorphisms, or $(G,\mathcal X,\mu)$ where $(\mathcal X,\mu)$ is a space with measure and $G$ acts by measure-preserving transformations, and a finite generating set $S$ in $G$, one considers the family $(\Gamma_x)_{x\in \mathcal X}$ of Schreier graphs  associated with the orbits of the action. More precisely, each orbit $G\cdot x$ gives rise to a graph $\Gamma_x$ with vertices the points of the orbit and edges of the form $(g\cdot x, sg \cdot x)$, with $g\in G, s\in S$.  By construction, Schreier graphs are directed, edge-labelled, rooted graphs, but depending on the context it may be useful to forget about labeling, orientation and, sometimes the rooting. They are regular of degree $|S|$. If the action is (essentially) free, then (almost) all Schreier graphs $(\Gamma_x)_{x\in \mathcal X}$ are isomorphic to the Cayley graph, so it is particularly interesting to consider Schreier graphs for non-free actions. If the action is minimal or ergodic, then the different orbital graphs $\Gamma_x$ are generically locally isomorphic, but don't have to be isomorphic in general.

The family of graphs $(\Gamma_x)_{x\in \mathcal X}$ lies naturally in the space $\mathcal G_{*,S}$ of rooted edge-labeled graphs endowed with the pointed Gromov-Hausdorff topology. The map $\Sch: \mathcal X \rightarrow \mathcal G_{*,S}$ that maps $x$ to the rooted graph $\Gamma_x$ allows to push the dynamics from the space $\mathcal X$ to the space of graphs, and gives rise to the so-called {\it Schreier dynamical system} in the space of rooted graphs. The group acts on its Schreier graphs by changing the root. A closely related object is the space of subgroups $Sub(G)$ of the group equipped with Tychonoff topology and with the action of $G$ by conjugation, together with the map $\Stab: \mathcal X \rightarrow Sub(G)$ that maps $x$ to the stabilizer subgroup $\Stab_G(x)$. It is easy to check that stabilizer subgroups are in one-to-one correspondence with the rooted directed and labeled Schreier graphs.

An important class of groups posessing non-trivial non-free minimal ergodic actions are groups of automorphisms of rooted spherically homogeneous trees. This class of groups came into prominence starting with the first discovery of groups of intermediate growth: the first example found by Grigorchuk \cite{Gri84} is a group of automorphisms of the rooted binary tree. An action by autormorphisms on a rooted spherically homogeneous tree $T$ that is transitive on the levels of the tree extends to a minimal action by homeomorphisms on the boundary of the tree $\partial T$ that is ergodic with respect to the Bernoulli measure. Here the boundary of the tree is as usual considered to be the space of infinite rays in the tree issued from the root. Dynamical properties of such boundary actions were first investigated in \cite{GNS00} and first examples of their Schreier graphs were studied in detail in \cite{BG00}. 

Since then  Schreier graphs have occupied an important place in the study of self-similar groups and beyond. Schreier dynamical systems were described for a number of important self-similar examples: the first Grigorchuk group \cite{Vor12}, the Basilica group \cite{DDMN10}, the Hanoi Towers group \cite{DGKPR}, groups generated by bounded automata \cite{BDN16} as well as for Thompson's group $F$ acting by homeomorphisms on the interval $(0,1)$ \cite{Sav15}. We refer to the survey \cite{Gri11} for more information on different applications and directions of research on Schreier dynamical systems. Here we would like to mention only that Schreier graphs of these and other groups found applications in the study of amenability \cite{JM13, JMMS18, Nek18}, random walks and other probabilistic models \cite{AAV13, DDN11, DDN12, JZ18}, automata \cite{Bon07}, Laplacian spectra \cite{GLN18, GLNS19, GNP19, BGJRT19}, subgroup structure of branch groups \cite{FG18}, unitary representations \cite{DG17}, IRS \cite{AGV14, HY18} and URS \cite{Fra18}, and this list is far from being complete.

In this paper we are interested in the class of spinal groups, introduced in~\cite{BS01} as a generalization of the first Grigorchuk group. It consists of countably many uncountable families $\mathcal M_{d,m}$ of finitely generated groups of automorphisms of regular rooted trees, where $d\geq 2$ denotes the degree of the rooted regular tree on which the groups act, and $m\geq 1$ is an additional integer parameter. Each $\mathcal M_{d,m}$ is what is called a self-similar family of groups (see Section 2 below for the precise definition of the groups). 
The class of spinal groups contains many exotic examples such as infinite torsion groups or groups of intermediate growth. The purpose of this paper is to describe the Schreier graphs $\Gamma_n, n\geq 0$ of spinal groups with respect to the natural spinal generating set for the action on the levels of the tree (finite Schreier graphs), and the Schreier graphs $\Gamma_\xi, \xi\in \partial T_d$  for the action on the boundary of the tree.
 
The paper is organized as follows. Spinal groups are defined in Section 2. In Section 3 we study their Schreier graphs for the action on the levels of the tree and show that they can be constructed recursively. A precise description is given in  
Proposition~\ref{prop:description_Gamma_n}. Infinite Schreier graphs $(\Gamma_\xi)_{\xi\in \partial T}$ are studied in Section 4, see in particular Proposition\ref{prop:description_Gamma_xi}. In Section \ref{sec:space_schreier_graphs} we study the map $\Sch: \partial T \to \mathcal{G}_{*, S}$, which assigns to each $\xi \in \partial T$ its rooted Schreier graph $(\Gamma_\xi, \xi)$. The closure of $\Sch(\partial T)$ in the space of rooted labeled graphs is described in Theorem~\ref{thm:description_space_graphs}.  Section~\ref{sec:number_of_ends}, and more precisely, Theorem~\ref{thm:number_ends} gives a characterization of which boundary points have one or two-ended graphs. In Theorem~\ref{thm:measures_ends} we compute the measure of each of these sets, and see that a typical Schreier graph is two-ended for a spinal group acting on the binary tree, and one-ended for a spinal group acting on a tree of degree $d\geq 3$. In Section~\ref{sec:isomorphisms} we investigate isomorphism classes of the Schreier graphs. As labeled graphs, they are never isomorphic. However, as unlabeled graphs, there are some nontrivial isomorphisms among them. 
In particular, Theorem~\ref{thm:rooted_isomorphism} shows that, for $d \ge 3$, two boundary points have isomorphic graphs iff they satisfy the compatibility condition given in Definition~\ref{def:compatibility}, namely, if they have zeros at the same positions and the words in between have suffixes $(d-1)^r$ with the same $r$. Moreover, the isomorphism classes have measure zero (Theorem~\ref{thm:measure_isomorphism_classes}). This gives us the Benjamini-Schramm limit of the sequence of graphs $\Gamma_n$. Finally, we illustrate our results with some examples in Section~\ref{sec:examples}.

\section{Spinal groups}
\label{sec:spinal_groups}

Let $d \ge 2$ be an integer and let $T_d$ be the $d$-ary infinite rooted tree. If we consider the alphabet $X = \{0, \dots, d-1\}$, vertices in $T_d$ are in bijection with the set $X^*$ of fininte words in $X$: the root is represented by the empty word $\emptyset$ and, if $v$ represents a vertex, $vi$ represents its $i$-th child, for $i \in X$. For $n \ge 0$, $X^n$ denotes the $n$-th level of $T_d$, whose vertices are at distance $n$ of the root and hence are represented by words in $X$ of length $n$. Any automorphism of $T_d$ must fix the root and hence maps $X^n$ to itself, for every $n \ge 0$. Any automorphism $g \in \Aut(T_d)$ can be described inductively by a permutation $\tau\in Sym(X)$ it induces on the vertices of the first level and its projections 
$g_i$, $i = 0, \dots, d-1$, to the $d$ subtrees attached at the root. Symbolically it can be written as $$g = \tau (g_0,...,g_{d-1}).$$
A group $G$ acting on a rooted tree $T_d$ is self-similar if for every $g\in G$ and for every $i \in X$, its projections $g|_i$ belong to $G$. 

Consider the automorphism $a \in \Aut(T_d)$ defined by $a(v_0 \dots v_n) = (v_0 + 1) v_1 \dots v_n$, for any $v_0 \dots v_n \in X^*$, where the sum is taken modulo $d$. This automorphism $a$ cyclically permutes the subtrees at the root, so that the corresponding $\tau=(01 \dots d-1)$ and all $g_i$'s are trivial automorphisms. We set $A = \langle a \rangle \le \Aut(T_d)$, and we have $A \cong \Zd$.

Let now $m \ge 1$ and $B = (\Zd)^m$, and let $\omega = (\omega_n)_n \in \Epi(B, A)^\N$ be a sequence of epimorphisms from $B$ to $A$. For every $b \in B$, we define the automorphism $b_\omega$ as follows

\footnotesize
\[
b_\omega(v_0\dots v_n) = \left\{ \begin{array}{cc}
(d-1)^r0\omega_{r}(b)(v_{r+1})v_{r+2}\dots v_n & \quad \text{if } v_0\dots v_r = (d-1)^r0\\
v & \text{otherwise}
\end{array}\right..
\]
\normalsize
These automorphisms are often called \emph{spinal} automorphisms, as they fix the rightmost ray of the tree (the \emph{spine}). They act as $a^j$ on the subtree rooted at $(d-1)^r0$, if $\omega_r(b) = a^j$.

If we let $\Omega = \Omega_{d,m} \subset \Epi(B, A)^\N$ be the set of sequences satisfying the condition
\[
\forall i \ge 0, \quad \bigcap\limits_{j\ge i} \Ker(\omega_j) = 1,
\]
then, for every $\omega \in \Omega$, the action of $B_\omega = \langle b_\omega \mid b \in B \rangle$ on $T_d$ is faithful. The set $\Omega$ is preserved by the shift $\sigma: \Omega \to \Omega$ defined by deleting the first symbol in the sequence.

Notice that all $b_\omega \in B_\omega$ stabilize all vertices of the first level, and hence decompose as
\[
b_\omega = (\omega_0(b_\omega), 1, \dots, 1, b_{\sigma\omega})
\]

Hence, for any $\omega \in \Omega$, we can consider the group defined as $G_\omega = \langle A, B_\omega \rangle$. We will abuse notation and write $B = B_\omega$. Such groups were first considered in~\cite{BS01} under the name of \emph{spinal groups}. In order to be precise, the definition in \cite{BS01} and in the subsequent \cite{BGS03} allows for more general $A$'s and $B$'s, however in the same time always assuming an additional condition on the sequence $\omega$, namely $\bigcup\limits_{j \ge i} \Ker(\omega_j) = B$ for all $i\ge 0$. This condition guarantees that the group $G_\omega$ is torsion. In this paper, we will use the term \emph{spinal groups} for the groups $G_\omega$ with $A=\Zd$, $B=(\Zd)^m$ and $\omega\in \Omega$ as defined above, but we will not assume the additional kernel condition and will consider both torsion and non-torsion groups. We will denote by $\mathcal M_{d,m}$ the family of groups $(G_\omega)_{\omega\in\Omega_{d,m}}$, for all $d\geq 2$, $m\geq 1$. Each spinal group $G_\omega$ comes naturally equipped with the spinal generating set $S_\omega = A \cup B_\omega \setminus \{1\}$.

Let us mention some particular examples. For and $d\geq 2$ and $m=1$ there is only one sequence in $\Omega_{d,m}$, the constant one. For $d=2$ the corresponding spinal group is the infinite dihedral group. For $d=3$ it is the so-called Fabrykowski-Gupta group, one of the early examples of groups of intermediate growth \cite{FG85, BP09}. The family $\mathcal M_{2,2}$ is the uncountable family of groups of intermediate growth constructed by Grigorchuk in \cite{Gri84}. 
In particular the first Grigorchuk group corresponds to the periodic sequence $(\pi_{01}\pi_{10}\pi_{11})^\infty$, where $\pi_{01},\pi_{10},\pi_{11}$ are the three non-trivial epimorphisms from $\Z_2^2$ to $\Z_2$.  It was shown in \cite{BS01} that all torsion spinal groups are of intermediate growth. It is open for non-torsion ones, except for the cases $d=2, m\geq 2$ (Grigorchuk's proof for $d=2,m=2$ \cite{Gri84} can be generalized for bigger $m$) and, recently, $d=3$ \cite{Fra19}.

\section{Schreier graphs on $X^n$}
\label{sec:schreier_finite}

Let $G$ be a group generated by a finite set $S$. If $G$ acts transitively on a set $Y$, its orbital Schreier graph $\Sch(G, Y, S)$ is the graph with vertex set $Y$ and with directed edges $(y, sy)$ for every $s \in S$ and $y \in Y$, labeled by $s$. Equivalently, for any subgroup $H \le G$, we define its associated Schreier graph $\Sch(G, H, S)$ as the graph with vertices elements of $G/H$ and directed edges $(gH, sgH)$ for every $s \in S$ and $gH \in G/H$, labeled by $s$. The orbital Schreier graph $\Sch(G, Y, S)$ is isomorphic to $\Sch(G, \Stab_G(y), S)$ for every $y \in Y$.

Accordingly, any finitely generated group $G\leq Aut(T_d)$ with a transivite action on every level $X^n$ gives rise to the graphs $\Gamma_n = \Sch(G, X^n, S)$, for $n \ge 0$. 

\begin{definition}
	Let $S$ be a finite set and $X = \{1, \dots, d\}$, and let $n \ge 0$. Let $\Gamma$ be a graph with edge labels in $S$ and vertex labels in $X^n$, and let $\Lambda$ be a finite graph on $d$ vertices, which we call $\lambda_1, \dots, \lambda_d$.

	Let also $v$ be a vertex of	$\Gamma$. We define $\tilde{\Gamma}$ to be the same graph as $\Gamma$ but removing all the loops on the vertex $v$. If $\tilde{\Gamma}_1, \dots, \tilde{\Gamma}_d$ are $d$ disjoint copies of $\tilde{\Gamma}$, with $v_1, \dots, v_d$ being the vertices corresponding to $v$ in $\Gamma$, we define the graph $\Star(\Lambda, \Gamma, v)$ as
	\[
	\Star(\Lambda, \Gamma, v) = \left( \Lambda \sqcup \bigsqcup_{i \in X} \tilde{\Gamma}_i \right) \big/ \{ \lambda_i = v_i \mid i \in X \}.
	\]

	Observe that no edges are identified in the process. All edges keep their labels. We will refer to $\tilde{\Gamma}_i$ as the $i$-th copy of $\Gamma$, slightly abusing notation. Every vertex in the $i$-th copy of $\Gamma$ is labeled by $wi$, if it corresponds to the vertex labeled by $w$ in $\Gamma$.
\end{definition}

This operation $\Star$ is a special case of the inflation of graphs used in~\cite{Bon07} and~\cite{BDN16}, but taking loops and labels into account.

Let now $G=G_\omega$, the spinal group defined by $d\ge 2$, $m\ge 1$ and $\omega\in \Omega_{d,m}$, as defined in Section 2. Recall that we consider it with spinal generating set  $S=S_\omega=A \cup B_\omega \setminus \{1\}$, and that we write $B$ for the spinal elements $B_\omega$.

\begin{proposition}
	\label{prop:description_Gamma_n}
	
	Let $\Gamma_n$ be the Schreier graph of a spinal group $G$ on the $n$-th level of the tree $T_d$, with respect to the spinal generating set $S$ . Then,
	\[
	\Gamma_1 = \Star(\Theta, \Xi, \emptyset), \qquad \Gamma_n = \Star(\Lambda_{\omega_{n-2}}, \Gamma_{n-1}, (d-1)^{n-2}0) \quad \forall n\ge 2, 
	\]
	where $\Xi$, $\Theta$ and $\Lambda_\pi$ are the following:
	
	\begin{itemize}
		\item $\Xi$ is a graph with one vertex, labeled $\emptyset$, which has $d^m-1$ loops, each labeled with a different, non-trivial element in $B$.
		\item $\Theta$ is a graph with $d$ vertices, labeled $0, \dots, d-1$. There is an edge labeled by $a^j$ from $i$ to $i + j$ (mod $d$), for every $i \in X$ and for every $1 \le j \le d-1$.
		\item If $\pi \in \Epi(B, A)$, $\Lambda_\pi$ is a graph with $d$ vertices, labeled $0, \dots, d-1$. For every $b \in B$, let $j$ be such that $\pi(b) = a^j$. Then, for every vertex $i$, there is an edge from $i$ to $i+j$ (mod $d$). Notice that this implies adding loops for all $b\in \Ker(\pi)$.
	\end{itemize}
	\input{img/blocks}
\end{proposition}

\begin{proof}
	First, recall from Section 2 that all elements in $B$ stabilize all vertices of the first level. This means that for every vertex in $\Gamma_1$, there are as many loops as elements in $B$. However, for every $1 \le j \le d-1$, $a^j$ maps vertex $i$ to $i+j$, for every $i \in X$. This proves the construction of $\Gamma_1$.
	
	Now assume $n \ge 2$, and let $\Gamma_n' = \Star(\Lambda_{\omega_{n-2}}, \Gamma_{n-1}, (d-1)^{n-2}0)$. By construction, we have $V(\Gamma_n) = V(\Gamma_n') = X^n$, so we only have to prove that the edges are the same. Let $v = v_0 \dots v_{n-1} \in X^n$, we will prove that its set of outgoing edges is the same in both $\Gamma_n$ and $\Gamma_n'$. Let us call $w = v_0 \dots v_{n-2} \in V(\Gamma_{n-1})$, so that $v = wv_{n-1}$.
	
	In $\Gamma_n$, it is clear that there is precisely one outgoing edge for every generator $s \in S$, going to $s(v)$. So, for every $s \in S$, we must prove that in $\Gamma_n'$ there is exactly one outgoing $s$-edge from $v$ to $s(v)$. Notice that $s(v) = s(w v_{n-1}) = s(w)v_{n-1}$ unless $w = (d-1)^{n-2}0$ and $s \in B$.

	Suppose first that $w \neq (d-1)^{n-2}0$. In that case, in $\Gamma_{n-1}$, $w$ has an outgoing $s$-edge towards $s(w)$ by hypothesis. Moreover, by construction, if $w \neq (d-1)^{n-2}0$, the outgoing $s$-edge from $v$ cannot go outside its copy of $\Gamma_{n-1}$. Therefore, $v$ must have an outgoing $s$-edge to $s(w)v_{n-1}$, and in fact $s(w)v_{n-1} = s(v)$, again because $w \neq (d-1)^{n-2}0$.

	Assume now that $w = (d-1)^{n-2}0$. If $s \in A$, $s(w) \neq w$, so the outgoing $s$-edge is not a loop, and so it is not removed in the construction of $\Gamma_n'$, hence there is an outgoing $s$-edge from $v$ to $s(w)v_{n-1}$ in $\Gamma_n'$. Because $s \in A$ changes only the first digit of any vertex, we have $s(v) = s(w)v_{n-1}$.

	Finally, suppose $w = (d-1)^{n-2}0$ and $s \in B$. In this case $s(w) = w$, so the edge is a loop and is indeed removed in the construction of $\Gamma_n'$. But the vertex $v$ is identified with the vertex labeled by $v_{n-1}$ in $\Lambda_{\omega_{n-2}}$, which has an outgoing $s$-edge towards the vertex labeled by $v_{n-1} + j$ in $\Lambda_{\omega_{n-2}}$, where $\omega_{n-2}(s) = a^j$. Therefore, in $\Gamma_n'$, the vertex $v$ has an outgoing $s$-edge towards the vertex $w(v_{n-1} + j)$. As it turns out, $s(v) = s((d-1)^{n-2}0v_{n-1}) = (d-1)^{n-2}0\omega_{n-2}(s)(v_{n-1}) = w(v_{n-1} + j)$, which completes the proof.
\end{proof}

\begin{remark}
\label{rem:diam}
	With this characterization, notice that $\diam(\Gamma_n) = 2^n - 1$.
\end{remark}

\section{Schreier graphs on $X^\N$}
\label{sec:schreier_infinite}


The action of a group $G$ on a rooted tree $T$ by automorphisms is naturally extended to the an action by homeomorphisms on the boundary $\partial T$ of the tree. In the case $T=T_d$, the boundary is in bijection with the set $X^\N$ of infinite words in the alphabet $X$. Hence we have infinite Schreier graphs $\Gamma_\xi = \Sch(G, G\cdot\xi, S) = \Sch(G, Stab_G(\xi), S)$, for $\xi \in X^\N$. 
In the case of spinal groups the extended action become, for any $\xi = \xi_0\xi_1\dots \in X^\N$, $a^j(\xi) = (\xi_0 + 1)\xi_1\dots$, and for every $b \in B$, $b(\xi)$ is the only word whose $n$-prefix is $b(\xi_0\dots\xi_{n-1})$ for every $n \ge 0$. 

Notice that $X^\N$ is endowed with the shift operator $\sigma: X^\N \to X^\N$, which removes the first letter of a sequence. We say that two points $\xi, \eta \in X^\N$ are \emph{cofinal} if they differ in finitely many letters, that is, if there exists some $r\ge0$ such that $\sigma^r(\xi) = \sigma^r(\eta)$. Cofinality is an equivalence relation, and we call the equivalence class of $\xi$ its \emph{cofinality class}.

\begin{proposition}
	Let $G$ be a spinal group and $\xi\in X^\N$. Then, $G\cdot \xi = \Cof(\xi)$.
\end{proposition}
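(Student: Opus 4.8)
The plan is to prove the two inclusions $G\cdot\xi\subseteq\Cof(\xi)$ and $\Cof(\xi)\subseteq G\cdot\xi$ separately, the first being routine and the second requiring the bulk of the work.

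For $G\cdot\xi\subseteq\Cof(\xi)$, I would observe that each generator in $S$ alters at most one letter of any infinite word: the element $a$ and its powers only change the letter in position $0$, while the defining formula for a spinal element $b_\omega$ shows that it changes the letter in position $r+1$ exactly when the prefix equals $(d-1)^r0$, and fixes $\xi$ otherwise. Hence an arbitrary $g\in G$, being a product of $k$ generators, moves at most $k$ letters, so $g\cdot\xi$ and $\xi$ differ in finitely many positions; that is, $g\cdot\xi\in\Cof(\xi)$.

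For the converse, the key observation is that ``$\eta\in G\cdot\xi$'' is an equivalence relation (the orbit relation), and that any two cofinal words can be joined by a finite chain of words in which consecutive terms differ in a single letter. Thus it suffices to prove the following single-letter statement: for every $\xi\in X^\N$, every position $i\ge 0$, and every $c\in X$, the word $\xi'$ obtained from $\xi$ by replacing $\xi_i$ with $c$ lies in $G\cdot\xi$. I would prove this by strong induction on $i$. For $i=0$ it is immediate, since $a^{c-\xi_0}$ sends $\xi$ to $\xi'$. For the inductive step ($i\ge1$), the difficulty is that a spinal generator can only modify the letter in position $i$ when the length-$i$ prefix is exactly $(d-1)^{i-1}0$, which is generally not the prefix of $\xi$. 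The idea is to maneuver into that situation and back: using the inductive hypothesis for positions $0,\dots,i-1$ I would first change the prefix of $\xi$ to $(d-1)^{i-1}0$, obtaining a word $\zeta$ in the same orbit as $\xi$ and still having $\zeta_i=\xi_i$; since $\omega_{i-1}$ is an epimorphism onto $A$, I can choose $b\in B$ with $\omega_{i-1}(b)=a^{c-\xi_i}$, and the action formula then shows that $b_\omega\cdot\zeta$ differs from $\zeta$ only in position $i$, which becomes $c$; finally I change the prefix back to $\xi_0\dots\xi_{i-1}$, again by the inductive hypothesis, landing on $\xi'$. Since each of these moves stays within $G\cdot\xi$ and orbit membership is transitive, $\xi'\in G\cdot\xi$, completing the induction.

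The main obstacle is precisely this inductive step: justifying that one may freely rewrite the prefix (which itself requires changing letters, and hence a priori the same problem) and verifying that, once the prefix has the special form $(d-1)^{i-1}0$, the chosen spinal element touches position $i$ alone. The strong induction on $i$ is what makes the prefix manipulation legitimate, so I would be careful to invoke the hypothesis only for positions strictly smaller than $i$, and to check via the explicit formula for $b_\omega$ that no shorter prefix $(d-1)^{r}0$ with $r<i-1$ also matches, so that the action on $\zeta$ is unambiguous.
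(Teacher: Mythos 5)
Your proof is correct and follows essentially the same route as the paper: the forward inclusion via the observation that each generator alters at most one letter, and the converse by realizing any single-letter change through prefix manipulation and the surjectivity of $\omega_{i-1}$. The paper in fact only sketches the converse in one sentence ("we can check that\dots"), so your strong induction on the position, with the check that the prefix $(d-1)^{i-1}0$ matches no shorter pattern, supplies exactly the details the authors omit.
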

\begin{proof}
	Because $a$ only changes the first digit of the sequence and any $b\in B$ either fixes the sequence or changes the first digit after a prefix $(d-1)^n0$, any generator changes at most one digit of the sequence. Let $\eta\in G\cdot \xi$, so there is a $g\in G$ such that $\eta = g\xi$. Then, $g$ changes at most $|g|_S$ digits in $\xi$, which implies that $\eta$ and $\xi$ are cofinal.
	
	Conversely, we can check that starting from $\xi$ and performing transformations corresponding to the generators (changing the first letter and changing the first letter after a specific pattern), we can obtain any point $\eta$ cofinal with $\xi$.
\end{proof}

\begin{proposition}
	\label{prop:description_Gamma_xi}
	Let $G$ be a spinal group and let $\xi\in X^\N$. The $\Gamma_\xi$ has vertex set $V(\Gamma_\xi) = \Cof(\xi)$ and the following outgoing edges for every $\eta = \eta_0\eta_1\dots \in Cof(\xi)$:
	
	\begin{itemize}
		\item $\forall j \in X\setminus\{0\}$, there is an edge to $(\eta_0 + j)\eta_1\dots$ labeled by $a^j$.
		
		\item $\forall b\in B$, if $\eta = (d-1)^n0\eta_{n+1}\sigma^{n+2}(\eta)$ for some $n \ge 0$, then there is an edge to $(d-1)^n0(\eta_{n+1} + j)\sigma^{n+2}(\eta)$ labeled by $b$, where $j$ is such that $\omega_n(b) = a^j$. Otherwise there is a loop at $\eta$ labeled by $b$.
	\end{itemize}
\end{proposition}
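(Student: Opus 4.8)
The plan is to unwind the definition of the Schreier graph together with the formula for the boundary action; essentially everything here is a direct verification once these are combined. The vertex set is supplied by the previous proposition, which identifies $G\cdot\xi$ with $\Cof(\xi)$. For the edges, recall that in $\Sch(G, G\cdot\xi, S)$ every vertex $\eta$ has exactly one outgoing edge labelled $s$ for each generator $s\in S = A\cup B\setminus\{1\}$, and that edge points to $s(\eta)$. Hence it suffices to compute $s(\eta)$ for each generator and each $\eta\in\Cof(\xi)$, and to read off the two families of edges.

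The generators of the form $a^j$ with $j\in X\setminus\{0\}$ are immediate: by the description of the extended action, $a^j(\eta) = (\eta_0+j)\eta_1\dots$, which is exactly the first family of edges.

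The substance lies in the elements $b\in B$. Here I use that on the boundary $b(\eta)$ is the unique infinite word whose length-$N$ prefix equals $b(\eta_0\dots\eta_{N-1})$ for every $N$, and I evaluate the finite formula defining $b_\omega$ on long enough prefixes. First I would observe that $\eta$ can match at most one block pattern $(d-1)^r0$: since $d\ge 2$ we have $0\ne d-1$, so the exponent is forced to be the number of leading letters of $\eta$ equal to $d-1$, and a match occurs precisely when the next letter is $0$. If $\eta = (d-1)^n0\,\eta_{n+1}\sigma^{n+2}(\eta)$, then for every $N\ge n+2$ the finite formula yields $b(\eta_0\dots\eta_{N-1}) = (d-1)^n0\,\omega_n(b)(\eta_{n+1})\,\eta_{n+2}\dots\eta_{N-1}$; these prefixes are consistent as $N$ grows, so writing $a^j = \omega_n(b)$ we obtain $b(\eta) = (d-1)^n0(\eta_{n+1}+j)\sigma^{n+2}(\eta)$, which is precisely the claimed edge (and it is a loop exactly when $b\in\Ker(\omega_n)$, i.e. when $j=0$).

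It remains to treat the words $\eta$ matching no such pattern, namely those whose first letter distinct from $d-1$ is not $0$, together with the spine $\eta=(d-1)^\infty$. For these, every sufficiently long prefix likewise fails to begin with a block $(d-1)^r0$, so the ``otherwise'' clause of the definition of $b_\omega$ fixes each prefix, whence $b(\eta)=\eta$ and we get the loop asserted in the statement. The only point requiring genuine care is this passage from the finite automorphism to the boundary action, combined with the uniqueness of the block decomposition and the correct handling of the two degenerate boundary cases just described; once these are settled, the proposition follows simply by recording $s(\eta)$ for each generator $s$.
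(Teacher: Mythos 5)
Your proposal is correct and follows the same route as the paper, which simply observes that the vertex set is given by the preceding proposition and that the edges are "easily checked using the definition of the action of $A$ and $B$ on $T_d$"; you have carried out that check explicitly, including the uniqueness of the matching prefix $(d-1)^n0$ and the degenerate cases where $b$ acts trivially.
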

\begin{proof}
	We already know that the orbit of $\xi$ is its cofinality class, and the edges are easily checked using the definition of the action of $A$ and $B$ on $T_d$.
\end{proof}



\begin{definition}
	Let $\xi\in X^\N$ and $\eta \in \Cof(\xi)$. We define the following subgraphs of $\Gamma_\xi$:
	
	\begin{itemize}
		\item $\Delta_\eta^n = X^n\sigma^n(\eta) = \{w\sigma(\eta) \mid w \in X^n\}$, for $n \ge 0$, after removing all loops at $(d-1)^{n-1}0 \sigma^n(\eta)$, and also at $(d-1)^n \sigma^n(\eta)$ unless $\sigma^n(\eta)$ is fixed by $B$.
		\item $\Lambda_\eta^n = (d-1)^n0X\sigma^{n+2}(\eta) = \{(d-1)^n0i\sigma^{n+2}(\eta) \mid i \in X\}$, for $n \ge 0$.
	\end{itemize}
\end{definition}

Notice that $\eta \in \Delta_\eta^n$ for every $n \ge 0$, and $\Delta_\eta^n$ does not depend on $\eta_0\dots\eta_{n-1}$. For every $n \ge 0$, the subgraphs $\Delta_\eta^n$ cover all vertices of $\Gamma_\xi$ and have $d^n$ vertices. Moreover, no vertex in  $\Delta_\eta^n$ has outgoing edges to the rest of $\Gamma_\xi$ except for $(d-1)^{n-1}0 \sigma^n(\eta)$ and possibly $(d-1)^n \sigma^n(\eta)$, depending on whether $\sigma^n(\eta)$ is fixed or not by $B$.

On the other hand, $\eta$ does not belong to $\Lambda_\eta^n$ for any $n \ge 0$ if it is fixed by $B$, and belongs to exactly one $\Lambda_\eta^n$ if it is not. Moreover, $\Lambda_\eta^n$ does not depend on $\eta_0\dots\eta_{n+1}$, and is composed of $d$ vertices connected by only $B$-edges. If we ignore the vertex labels, $\Lambda_\eta^n$ is the same graph as $\Lambda_{\omega_n}$ in Proposition~\ref{prop:description_Gamma_n} and Fig.~\ref{fig:blocks}.

\begin{proposition}
	\label{prop:copy}
	
	Let $\xi \in X^\N$ and $\eta \in \Cof(\xi)$. Then, for every $n \ge 1$, the subgraph $\Delta_\eta^n$ of $\Gamma_\xi$ is isomorphic to the graph $\Gamma_n'$ obtained from $\Gamma_n$ by removing the following loops:
	\begin{itemize}
		\item All loops at the vertex $(d-1)^{n-1}0$.
		
		\item All loops at the vertex $(d-1)^n$  if $\sigma^n(\eta)$ is not fixed by $B$ .
	\end{itemize}

	In particular, the graph $\Delta_\eta^n$ is isomorphic to one of two graphs, only depending on whether $\sigma^n(\eta)$ is fixed by $B$ or not. The position of $\eta$ in this graph depends only on $\eta_0 \dots \eta_{n-1}$.
\end{proposition}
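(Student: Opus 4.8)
The plan is to construct an explicit, label- and orientation-preserving isomorphism of directed graphs and verify it edge by edge. Define
\[
\phi \colon X^n \longrightarrow V(\Delta_\eta^n), \qquad \phi(w) = w\sigma^n(\eta),
\]
which prepends the length-$n$ word $w$ to the fixed suffix $\sigma^n(\eta)$. This is a bijection between the vertex set $X^n$ of $\Gamma_n$ and the $d^n$ vertices of $\Delta_\eta^n$, and I would show that it carries the edges of $\Gamma_n'$ exactly onto the edges of $\Delta_\eta^n$. The verification compares, for each generator $s \in S$, the action of $s$ on the level-$n$ vertex $w$ (which governs $\Gamma_n$, via the proof of Proposition~\ref{prop:description_Gamma_n}) with its action on $w\sigma^n(\eta)$ in $\Gamma_\xi$ (described by Proposition~\ref{prop:description_Gamma_xi}).

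The routine part is the interior. For $s = a^j$ with $j\neq 0$, the action only alters the first letter, so in both graphs the $a^j$-edge replaces $w_0$ by $w_0+j$ and leaves all remaining coordinates fixed; these edges stay inside $\Delta_\eta^n$, carry no loops, and match under $\phi$. For $s=b\in B$, let $k$ be the length of the leading block of $(d-1)$'s in $w$. If $k<n$ and $w_k\neq 0$, no prefix $(d-1)^j0$ exists and $b$ fixes the point in both graphs, producing loops that survive in both $\Gamma_n'$ and $\Delta_\eta^n$. If $k\le n-2$ and $w_k=0$, then $b$ modifies the coordinate $w_{k+1}$, which lies among the first $n$ letters, so $b_\omega(w)$ and $b(w\sigma^n(\eta))=b_\omega(w)\sigma^n(\eta)$ change the same coordinate identically; the resulting edges (including the loops coming from $\Ker\omega_k$) correspond under $\phi$ and remain in $\Delta_\eta^n$. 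Since these two subcases exhaust exactly the vertices $w\neq (d-1)^{n-1}0,\ (d-1)^n$, the interior matches perfectly.

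The heart of the argument, and the step I expect to be the main obstacle, is the two boundary vertices where the $B$-action escapes the first $n$ coordinates into the suffix. At $w=(d-1)^{n-1}0$, every $b$ acts on the suffix letter $\eta_n$ (with $\omega_{n-1}(b)=a^j$): in $\Gamma_\xi$ the corresponding edge either leaves $\Delta_\eta^n$ when $\omega_{n-1}(b)\neq 1$, or is a loop deleted by definition of $\Delta_\eta^n$ when $b\in\Ker\omega_{n-1}$; meanwhile at level $n$ the same $b$ fix $(d-1)^{n-1}0$, giving precisely the loops deleted when passing to $\Gamma_n'$. At $w=(d-1)^n$ the behaviour is governed by whether $\sigma^n(\eta)$ is fixed by $B$: if it is fixed, the point $(d-1)^n\sigma^n(\eta)$ has no prefix $(d-1)^\ell0$, so each $b$ fixes it and contributes a loop, matching the loops of $\Gamma_n$ that $\Gamma_n'$ retains; if it is not fixed, $\sigma^n(\eta)$ begins with $(d-1)^\ell0$ for some $\ell$, so $b$ acts on a suffix coordinate and every $b$-edge either leaves $\Delta_\eta^n$ or is a deleted loop, matching the removal of the loops at $(d-1)^n$ in $\Gamma_n'$. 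This is exactly the dichotomy recorded in the statement and in the definition of $\Delta_\eta^n$, and getting the indexing of $\omega$ and the loop bookkeeping right here is where the care is needed.

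Assembling the cases shows $\phi$ is the desired isomorphism $\Gamma_n'\to\Delta_\eta^n$. For the final assertions, note that the only freedom in forming $\Gamma_n'$ from the fixed graph $\Gamma_n$ is whether the loops at $(d-1)^n$ are removed, and this is decided solely by whether $\sigma^n(\eta)$ is fixed by $B$; hence $\Delta_\eta^n$ is isomorphic to one of exactly two graphs. Finally, writing $\eta=(\eta_0\cdots\eta_{n-1})\,\sigma^n(\eta)$ gives $\eta=\phi(\eta_0\cdots\eta_{n-1})$, so the position of $\eta$ in $\Gamma_n'$ is the vertex $\eta_0\cdots\eta_{n-1}$, depending only on the first $n$ letters of $\eta$.
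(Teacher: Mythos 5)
Your proof is correct and follows essentially the same route as the paper: the same vertex bijection $w \mapsto w\sigma^n(\eta)$, the same split into interior vertices versus the two special vertices $(d-1)^{n-1}0$ and $(d-1)^n$, and the same loop bookkeeping at those vertices according to whether $\sigma^n(\eta)$ is fixed by $B$. Your subdivision of the interior $B$-edge case by the length of the leading $(d-1)$-block is slightly more explicit than the paper's, but the content is identical.
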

\begin{proof}
	Consider the bijection between the vertex sets given by $\varphi: v\sigma^n(\eta) \mapsto v$. Let $v \in X^n$ and $s \in S$, and let us prove that $v\sigma^n(\eta)$ has an outgoing $s$-edge to $s(v)\sigma^n(\eta)$ in $\Delta_\eta^n$ iff $v$ has an outgoing $s$-edge to $s(v)$ in $\Gamma_n'$.

	Suppose first that $v \neq (d-1)^{n-1}0, (d-1)^n$, so no loops are removed on $v$ in $\Gamma_n'$, and therefore $v$ has an outgoing $s$-edge to $s(v)$. For such $v$, we have $s(v \sigma^n(\eta)) = s(v) \sigma^n(\eta)$, and so $\varphi(s(v \sigma^n(\eta))) = \varphi(s(v) \sigma^n(\eta)) = s(v)$.

	Suppose now that $v = (d-1)^{n-1}0, (d-1)^n$. If $s \in A$, again $s(v \sigma^n(\eta)) = s(v) \sigma^n(\eta)$, and so $\varphi(s(v \sigma^n(\eta))) = \varphi(s(v) \sigma^n(\eta)) = s(v)$. Because $s(v) \neq v$, the edge is not a loop and hence is not removed in $\Gamma_n'$.

	Assume $s \in B$. If $v = (d-1)^{n-1}0$ or $v = (d-1)^n$, $v \sigma^n(\eta)$ has an outgoing $s$-edge to $s(v s|_v(\sigma^n(\eta)))$. This is either a loop or an edge to a vertex not in $\Delta_\eta^n$. If it is a loop, it is removed in both $\Delta_\eta^n$ and $\Gamma_n'$ iff $v = (d-1)^{n-1}0$ or $\sigma^n(\eta)$ is not fixed by $B$, and is kept in both otherwise. If it is an edge to the rest of $\Gamma_\xi$, in $\Gamma_n'$ it is a loop, which is removed as $\sigma^n(\eta)$ is not fixed by $B$. Hence there is no outgoing $s$-edge from $v$ or $v \sigma^n(\eta)$ in $\Gamma_n'$ or $\Delta_\eta^n$.
\end{proof}

We will say that these subgraphs $\Delta_\eta^n$ isomorphic to $\Gamma_n'$ are copies of $\Gamma_n$ in $\Gamma_\xi$. They are isomorphic up to some loops in only two vertices. 
We can regard $\Gamma_\xi$ as the disjoint union of copies of $\Delta_\eta^n$, glued together through copies of $\Lambda_\eta^r$, $r \ge n-1$.

\begin{remark}
	\label{rem:balls}
	Let $\xi \in X^\N$, $\eta \in \Cof(\xi)$ and $n \ge 0$. We denote by $\B_v(r)$ the ball centered at a vertex $v$ of radius $r \ge 0$. Combining Proposition~\ref{prop:copy} and Proposition~\ref{prop:description_Gamma_n}, we can check the following properties.

	\begin{equation}
	\bigcup_{v \in \Lambda_\eta^n} \B_v(2^{n+1} - 1) = \Delta_\eta^{n+2}.
	\end{equation}

	\begin{equation}
	\bigcup_{v \in \Lambda_\eta^n} \B_v(2^k - 1) = X^k(d-1)^{n-k}0X\sigma^{n+2}(\eta), \quad 0 \le k \le n.
	\end{equation}
\end{remark}

\section{Space of Schreier graphs}
\label{sec:space_schreier_graphs}

Recall that if $G$ is a group with a finite generating set $S$ acting on a topological space $\mathcal X$,  we can associate to the action the family of the rooted, directed, labeled orbital Schreier graph $(\Gamma_x, x)_{ x\in \mathcal X}$.  This defines a map $\Sch$ from $\mathcal X$ to the space of rooted, directed, labeled graphs $\mathcal{G}_{*, S}$:
\[
\begin{array}{llll}
\Sch: & \mathcal X & \to & \mathcal{G}_{*, S} \\
& x & \mapsto & (\Gamma_x,x)
\end{array}
\]
Note that we write $(\Gamma, v)$ for a graph $\Gamma$ rooted at the vertex $v$.
The space $\mathcal{G}_{*, S}$ is equipped with the pointed Gromov-Hausdorff topology. A basis for this topology is formed by the so-called cylinder sets, each of which contains all graphs with isomorphic $r$-balls around the root, for $r \ge 0$. If $G$ is a finitely generated group of automorphisms of a rooted spherically homogeneous tree $T$ that acts transitively of all levels of the tree  then, for each infinite ray $\xi\in\partial T$, the sequence of finite Schreier graphs $(\Gamma_n, \xi_n)$ converges to $(\Gamma_\xi, \xi)$, where $x_n$ is the vertex of the ray $\xi$ on the $n$-th level of the tree. 

Let now $G$ be a spinal group with the spinal generating set $S$, namely, $G=G_\omega$, $\omega\in \Omega_{d,m}$ with $d\geq 2$, $m\geq 1$, and $S=S_\omega$.

\begin{proposition}
	\label{prop:Sch_continuous}
	$\Sch$ is continuous everywhere except for points in $\Cof((d-1)^\N)$.
\end{proposition}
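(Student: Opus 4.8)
The plan is to reduce the continuity of $\Sch$ at a point $\xi$ to a purely metric statement about $\Gamma_\xi$, and then to read off the answer from the recursive structure of the copies $\Delta_\xi^n$. Continuity at $\xi$ means precisely that, for every $r$, the rooted labelled ball $\B_\xi(r)$ in $\Gamma_\xi$ is isomorphic to $\B_\eta(r)$ in $\Gamma_\eta$ for all $\eta$ sharing a long enough prefix with $\xi$. By Proposition~\ref{prop:copy} the subgraph $\Delta_\xi^n \cong \Gamma_n'$ meets the rest of $\Gamma_\xi$ only at the two distinguished vertices $e_1^{(n)} = (d-1)^{n-1}0\,\sigma^n(\xi)$ and $e_2^{(n)} = (d-1)^n \sigma^n(\xi)$ (the latter only when $\sigma^n(\xi)$ is not fixed by $B$). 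Hence if both lie at distance $>r$ from $\xi$, then $\B_\xi(r) \subseteq \Delta_\xi^n$ and, avoiding the exit vertices, it coincides with the $r$-ball of $\Gamma_n$ around $\xi_0\dots\xi_{n-1}$; this depends only on the prefix $\xi_0\dots\xi_{n-1}$ and on the fixed group data, hence is shared by every $\eta$ with the same prefix (whose distances $\delta_i^{(n)}$ are the same functions of the prefix). Thus continuity at $\xi$ will follow once I show $d(\xi, e_1^{(n)}) \to \infty$ and $d(\xi, e_2^{(n)}) \to \infty$.

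To control these I would set $\delta_1^{(n)} = d(\xi, e_1^{(n)})$, $\delta_2^{(n)} = d(\xi, e_2^{(n)})$ and $D^{(n)} = d(e_1^{(n)}, e_2^{(n)})$, and exploit $\Gamma_{n+1} = \Star(\Lambda_{\omega_{n-1}}, \Gamma_n, (d-1)^{n-1}0)$. Two observations make this tractable: each $\Lambda_\pi$ is complete (every vertex joined to every other, since $\pi$ is onto), so the $d$ glued copies of $\Gamma_n$ are pairwise at distance $1$ through their gluing vertices; and that gluing vertex is exactly $e_1^{(n)}$, while $e_1^{(n+1)}, e_2^{(n+1)}$ are the images of $e_2^{(n)}$ in copies $0$ and $d-1$. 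From this I get $D^{(n+1)} = 2D^{(n)}+1$, so $D^{(n)} = 2^n-1 = \diam(\Gamma_n)$, together with
\[
\delta_1^{(n+1)} = \begin{cases} \delta_2^{(n)} & \text{if } \xi_n = 0,\\ \delta_1^{(n)} + 2^n & \text{if } \xi_n \neq 0,\end{cases}
\qquad
\delta_2^{(n+1)} = \begin{cases} \delta_2^{(n)} & \text{if } \xi_n = d-1,\\ \delta_1^{(n)} + 2^n & \text{if } \xi_n \neq d-1.\end{cases}
\]
The key consequence is that $\delta_2^{(n)}$ stays bounded exactly when $\xi_n = d-1$ for all large $n$, i.e. when $\xi \in \Cof((d-1)^\N)$; otherwise a letter $\xi_n \neq d-1$ occurs infinitely often and resets $\delta_2^{(n)}$ to a value $\geq 2^n$, so $\delta_2^{(n)} \to \infty$. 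A short split (whether $\xi_n \neq 0$ infinitely often, else use $\delta_1^{(n+1)} = \delta_2^{(n)}$ and cofinality with $0^\N$) shows $\delta_1^{(n)} \to \infty$ too whenever $\xi \notin \Cof((d-1)^\N)$. By the first paragraph this proves continuity off $\Cof((d-1)^\N)$.

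For the discontinuity at $\xi = w(d-1)^\N \in \Cof((d-1)^\N)$, the recursion shows $\delta_2^{(n)}$ is eventually a constant $\delta$, so the vertex $v^* = (d-1)^\N = e_2^{(n)} \in \Cof(\xi)$ sits at fixed distance $\delta$ from the root and, being fixed by $B$, carries all $d^m-1$ nontrivial $B$-loops. I would perturb $\xi$ by $\eta^{(k)} = \xi_0\dots\xi_{k-1}0(d-1)^\N \to \xi$. For $n \leq k$ the prefixes agree, so the root sits at the same distance $\delta$ from $e_2^{(n)}$ in $\Gamma_{\eta^{(k)}}$; but now $\sigma^n(\eta^{(k)})$ begins with a $0$, hence is not fixed by $B$, so the corresponding vertex $v^{**}$ loses its $B$-loops, which become edges leaving the $\delta$-ball. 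Matching the two balls through their common identification with $\Gamma_n$ (Proposition~\ref{prop:copy}), they agree everywhere except at this single vertex, where $\B_\xi(\delta)$ has $d^m-1$ more loops than $\B_{\eta^{(k)}}(\delta)$; hence they are non-isomorphic for all large $k$, and $\Sch(\eta^{(k)}) \not\to \Sch(\xi)$.

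The main obstacle I anticipate is justifying the distance recursion rigorously: I must check that each copy of $\Gamma_n$ is isometrically embedded in $\Gamma_{n+1}$ (and in $\Gamma_\xi$), i.e. that the routed paths through the $\Lambda$-ports are genuinely geodesic and that no shortcut leaves and re-enters a copy. This rests on the finite ramification already isolated after the definition of $\Delta_\eta^n$: the copies are joined only at the single vertex $e_1^{(n)}$ each, so any inter-copy path must traverse these ports, and a detour out and back can never shorten a path. Once this is pinned down the recursions and both halves of the argument follow, the only remaining care being the bookkeeping in the discontinuity step, for which the clean isomorphism invariant is simply the number of $B$-loops contained in the two $\delta$-balls.
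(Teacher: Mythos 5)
Your proof is correct, and its two halves line up with the paper's: the discontinuity argument (a vertex at bounded distance from the root that is fixed by $B$ for $\xi$ but not for the perturbations, detected by counting $B$-loops) is essentially the argument the paper gives. For the continuity half you do noticeably more work than necessary. The paper simply uses the exhaustion $\Gamma_\xi=\bigcup_R \Delta_\xi^R$ to choose $R$ with $\B_\xi(r)\subset\Delta_\xi^R$, and then only needs one distance estimate: that $(d-1)^R\sigma^R(\xi)$ can be pushed out of the ball, which is possible precisely when $\xi\notin\Cof((d-1)^\N)$. Since $\Delta_\xi^R\cong\Gamma_R'$ depends only on the prefix $\xi_0\dots\xi_{R-1}$ except for loops at that single vertex, the balls over the cylinder agree. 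Your recursion for $\delta_1^{(n)}=d(\xi,(d-1)^{n-1}0\sigma^n(\xi))$ is correct but superfluous: you never need $e_1^{(n)}$ to leave the ball, only that the ball is contained in some $\Delta_\xi^n$, which exhaustion gives for free. This also dissolves the ``main obstacle'' you flag: once $\B_\xi(r)\subseteq\Delta_\xi^R$ as a vertex set, every path of length at most $r$ from $\xi$ automatically stays inside $\Delta_\xi^R$ (any vertex it visits lies in the ball), so no isometric-embedding or no-shortcut lemma is needed to identify the ball with a ball of $\Gamma_R'$. What your quantitative route buys is explicit control of $d(\xi,e_1^{(n)})$ and $d(\xi,e_2^{(n)})$ and the identity $D^{(n)}=2^n-1$, which is more than the proposition requires but is not wrong; if you keep it, you should still justify the recursion's geodesic claims (single-port copies inside $\Gamma_{n+1}$ make this routine, as you indicate).
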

\begin{proof}
	Let $\xi$ be a point in the boundary, and let $\Sch(\xi)$ be its image. A neighborhood of $\Sch(\xi)$ is a set of rooted graphs such that, for some $r \ge 0$, their balls of radius $r$ are isomorphic to $\B = \B_\xi(r)$. Fix $r \ge 1$ and let $U$ be the corresponding neighborhood of $\Sch(\xi)$.
	
	Let now $R \ge 0$ be such that $\B \subset \Delta_\xi^R$ and such that $(d-1)^R\sigma^R(\xi) \not\in \B$. There is always an $R$ guaranteeing the first condition, but to ensure the second we need $\xi \not\in \Cof((d-1)^\N)$.
	
	Now consider the neighborhood $\xi_0 \dots \xi_{R-1}X^\N$ of $\xi$, and let $\eta$ be a point of this neighborhood. We need to show that $\Sch(\eta) \in U$, so that $\B' = \B_\eta(r)$ is isomorphic to $\B$. Using Proposition~\ref{prop:copy}, $\Delta_\xi^R$ and $\Delta_\eta^R$ are isomorphic to two possible graphs $\Gamma_R'$, which only differ in some loops at $(d-1)^R$. Since $(d-1)^R\sigma^R(\xi) \not \in \B$, and hence $(d-1)^R\sigma^R(\eta) \not \in \B'$ because the root is at the same position, $\B$ and $\B'$ must be isomorphic, and so $\Sch$ is continuous everywhere outside $\Cof((d-1)^\N)$.

	For points $\xi = w(d-1)^\N \in \Cof((d-1)^\N)$, consider the sequence $(w(d-1)^n0^\N)_n$, which converges to $\xi$. Any ball centered at $w(d-1)^n0^\N$ big enough will contain $(d-1)^{|w|+n}0^\N$, which is not fixed by $B$, and hence outgoing edges which are not loops for some $b \in B$, on that vertex, while $(d-1)^\N$ is fixed by $B$, and so all its outgoing $B$-edges are loops. The balls will not be isomorphic, and so $\Sch$ is not continuous on $\Cof((d-1)^\N)$.
\end{proof}


In order to transport the action of $G$ on $\mathcal X$ to the space of Schreier graphs, we thus have to consider $\overline{\Sch(X^\N \setminus \Cof((d-1)^\N))}$, as it is the closure of the image of the continuity points of $\Sch$. This will be done in Theorem~\ref{thm:description_space_graphs}, for which we will need the following Lemma.

\begin{lemma}
	\label{lem:common_prefix}
	Let $\xi, \eta \in X^\N$. If $B_\xi(r)$ and $B_\eta(r)$ are isomorphic, then $\xi$ and $\eta$ share a prefix of length $\lfloor\log_2(r)\rfloor$.
\end{lemma}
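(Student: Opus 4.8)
The plan is to set $n = \lfloor \log_2 r\rfloor$, so that $r \ge 2^n$, and to combine this with Remark~\ref{rem:diam}, which gives $\diam(\Gamma_n) = 2^n - 1 < r$. By Proposition~\ref{prop:copy} the subgraph $\Delta_\xi^n \subseteq \Gamma_\xi$ is a copy of $\Gamma_n$ containing the root $\xi$, and removing loops does not change distances, so its intrinsic diameter is still $2^n - 1$. Hence every vertex of $\Delta_\xi^n$ lies within distance $2^n - 1 \le r$ of $\xi$ in $\Gamma_\xi$, so $\Delta_\xi^n \subseteq B_\xi(r)$, and likewise $\Delta_\eta^n \subseteq B_\eta(r)$. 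Thus both balls contain a full rooted copy of $\Gamma_n$ in which, again by Proposition~\ref{prop:copy}, the root occupies the position coded by the prefix $\xi_0\dots\xi_{n-1}$, respectively $\eta_0\dots\eta_{n-1}$. The lemma reduces to showing that an isomorphism $B_\xi(r) \cong B_\eta(r)$ forces these two codes to coincide.

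The main lever is that these are Schreier graphs: each generator acts as a permutation, so every vertex has exactly one outgoing $s$-edge for each $s \in S$. Consequently a rooted, direction- and label-preserving isomorphism $\phi \colon B_\xi(r) \to B_\eta(r)$ is unique if it exists and is the ``same-word'' map $\phi(w\cdot\xi) = w\cdot\eta$ for every word $w$ in $S$ reaching inside the ball; in particular it carries loops to loops, so for every vertex $w\cdot\xi$ in the ball and every $s$, the generator $s$ fixes $w\cdot\xi$ if and only if it fixes $w\cdot\eta$. I would first use this to show that $\phi$ respects the combinatorial scaffolding at scale $n$. The $A$-edges partition each ball into clusters that are single $A$-orbits of size $d$ (one for each tail $v_1\dots v_{n-1}$), and $\phi$ maps clusters to clusters; moreover the two attachment vertices $(d-1)^{n-1}0\,\sigma^n(\xi)$ and $(d-1)^n\sigma^n(\xi)$ through which $\Delta_\xi^n$ meets the rest of $\Gamma_\xi$ are recognizable intrinsically from their $B$-loop pattern (Proposition~\ref{prop:description_Gamma_xi}), so $\phi$ preserves them and therefore maps the scale-$n$ copy $\Delta_\xi^n$ onto $\Delta_\eta^n$ as rooted labeled graphs.

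It then remains to prove the rigidity statement: a rooted, label-preserving isomorphism between the copies $\Delta_\xi^n \cong \Gamma_n'$ and $\Delta_\eta^n \cong \Gamma_n'$ must send the vertex coded by $\xi_0\dots\xi_{n-1}$ to the one coded by $\eta_0\dots\eta_{n-1}$ with equal codes. I expect this to be the main obstacle: such a map amounts to a label-preserving automorphism of $\Gamma_n'$ moving one code to another, i.e. an element of the centralizer of the $G$-action on $X^n$, and within a single $A$-cluster the labeled structure is only a $\Zd$-torsor, so a priori a cyclic shift of the first coordinate is not excluded. The symmetry is broken by the $B$-loop data: inside a cluster the vertices carry different loop patterns according to the length $k$ of their leading $(d-1)^k0$ run and the kernel $\Ker(\omega_k)$ that this position reads off (Proposition~\ref{prop:description_Gamma_xi}), and the two marked attachment vertices further pin the coordinates. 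I would run this as an induction on $n$ along the recursion $\Gamma_n = \Star(\Lambda_{\omega_{n-2}}, \Gamma_{n-1}, (d-1)^{n-2}0)$ of Proposition~\ref{prop:description_Gamma_n}: the base case $n=1$ recovers the first digit from the loop pattern together with the $\Theta$-structure, and peeling off one $\Star$-layer recovers one further digit while preserving the marked attachment vertices needed to continue. Concluding that $\phi$ fixes all codes yields $\xi_0\dots\xi_{n-1} = \eta_0\dots\eta_{n-1}$, the claimed common prefix of length $\lfloor\log_2(r)\rfloor$.
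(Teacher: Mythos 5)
Your proposal follows essentially the same route as the paper: take $n=\lfloor\log_2 r\rfloor$, use Remark~\ref{rem:diam} and Proposition~\ref{prop:copy} to see that $\Delta_\xi^n$ and $\Delta_\eta^n$ are copies of $\Gamma_n$ contained in the respective balls, argue that the (rooted, labeled) isomorphism restricts to $\Delta_\xi^n\to\Delta_\eta^n$, and conclude that the roots occupy the same vertex of $\Gamma_n$, i.e.\ have equal length-$n$ prefixes. The only difference is that you explicitly isolate and sketch the rigidity step (that the induced label-preserving automorphism of $\Gamma_n'$ cannot move the root's code), which the paper asserts in a single sentence; your loop-pattern/centralizer argument is a correct way to justify it.
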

\begin{proof}
	Let $k = \lfloor\log_2(r)\rfloor$, so we have $r \ge 2^k$. By Proposition~\ref{prop:copy} we know that the subgraphs $\Delta_\xi^k$ and $\Delta_\eta^k$ are copies of $\Gamma_k$. Because the diameter of $\Gamma_k$ is $2^k - 1$ (see Remark~\ref{rem:diam}, they must be fully contained in $B_\xi(r)$ and $B_\eta(r)$, respectively. The isomorphism between the balls must then restrict to an isomorphism between $\Delta_\xi^k$ and $\Delta_\eta^k$, which maps $\xi$ to $\eta$. Both are mapped to the same vertex of $\Gamma_k$, which means that their prefixes of length $k$ coincide.
\end{proof}

\begin{theorem}
	\label{thm:description_space_graphs} Let $G = G_\omega$ be a spinal group with $\omega \in \Omega_{m,d}$ with $d=2,m\geq 2$ or $d\geq 3, m\geq 1$. Let $\Sch: X^\N \to \mathcal{G}_{*, S}$ be as above. Then
	\begin{enumerate}
		\item The map $\Sch$ is injective. It is continuous everywhere except in $\Cof((d-1)^\N)$.
		\item The set $\overline{\Sch(X^\N)}$ does not have isolated points, if $d > 2$. When $d=2$, the set of isolated points is $\Sch(\Cof((d-1)^\N))$.
		\item The set $\overline{\Sch(X^\N)}$ is the disjoint union of $\Sch(X^\N)$ and countably many points, which are obtained from finitely many $d$-ended graphs by choosing the root arbitrarily. These graphs are $\tilde{\Gamma}_\pi = \Star(\Lambda_\pi, \Gamma_{(d-1)^\N}, (d-1)^\N)$, for every $\pi \in \Epi(B, A)$ repeating infinitely often in $\omega$.
	\end{enumerate}
\end{theorem}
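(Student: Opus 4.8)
The plan is to prove the three parts in sequence, using the structural description of the infinite Schreier graphs from Section 4 as the main engine. I will first establish injectivity, then characterize the isolated points, and finally identify the boundary of the image. Throughout, the key technical tool will be Proposition~\ref{prop:copy} together with Lemma~\ref{lem:common_prefix}, which together let me recover the full ray $\xi$ from arbitrarily large balls in $(\Gamma_\xi, \xi)$.

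For part (1), injectivity means that distinct rays give non-isomorphic \emph{rooted} graphs. The plan is to show that $\xi$ is determined by $(\Gamma_\xi, \xi)$. Given two rays $\xi \neq \eta$, they differ in some digit, say at position $k$. Taking $r$ large enough that $2^{\lfloor \log_2 r\rfloor} > k$, Lemma~\ref{lem:common_prefix} forces any isomorphism of $r$-balls to respect prefixes of length exceeding $k$; but $\xi$ and $\eta$ already disagree before that length, contradicting the existence of a root-preserving isomorphism. Hence $\Sch(\xi) \neq \Sch(\eta)$. The continuity statement is exactly Proposition~\ref{prop:Sch_continuous}, so part (1) requires essentially no new work beyond citing it. The reason the hypotheses $d=2, m\geq 2$ or $d\geq 3$ enter here is to guarantee $d^m > 2$, ruling out the infinite dihedral degenerate case where the spinal structure collapses; I would remark on why this ensures the subgraphs $\Delta_\xi^n$ are genuinely distinguishable.

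For part (2), I would analyze isolated points directly from the Gromov--Hausdorff topology: a point $\Sch(\xi)$ is isolated iff some ball $\B_\xi(r)$ already determines the isomorphism type of the whole rooted graph. Using the decomposition of $\Gamma_\xi$ into copies of $\Delta_\eta^n$ glued through the $\Lambda_\eta^r$ blocks, I would argue that for $d > 2$ one can always perturb $\xi$ far out along the ray (changing a digit past radius $r$) to produce a distinct but $r$-locally-isomorphic graph, since the gluing blocks $\Lambda_\pi$ offer genuine branching choices when $d\geq 3$; hence no point is isolated. For $d=2$, the blocks $\Lambda_\pi$ are single edges, so the only freedom is whether the far tail is eventually fixed by $B$, i.e. whether $\sigma^n(\xi)$ lies in $\Cof((d-1)^\N)$. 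For $\xi \in \Cof((d-1)^\N)$ the spine tail is rigidly $B$-fixed, and a suitable ball pins down the entire two-ended graph, making $\Sch(\xi)$ isolated; conversely outside this set the tail admits non-loop $B$-edges arbitrarily far out, preventing isolation.

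Part (3) is the heart of the theorem and where I expect the \textbf{main obstacle}. The strategy is to take a sequence $\Sch(\xi^{(k)})$ converging in $\mathcal{G}_{*,S}$ and determine its limit. By Lemma~\ref{lem:common_prefix}, convergence of the balls forces the roots $\xi^{(k)}$ to stabilize on longer and longer prefixes, so either they converge to a single ray (giving a point of $\Sch(X^\N)$, since $\Sch$ is continuous off the bad set) or the limiting behavior escapes to a new graph. The delicate case is exactly when the roots march out toward the spine: here the limiting rooted graph is one in which the distinguished gluing vertex $(d-1)^\N$ appears as a genuine branch point carrying a copy of some $\Lambda_\pi$, where $\pi$ must be a value $\omega_n$ occurring infinitely often (otherwise the limit would not be attained). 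This is precisely $\tilde\Gamma_\pi = \Star(\Lambda_\pi, \Gamma_{(d-1)^\N}, (d-1)^\N)$. I would verify that each such $\tilde\Gamma_\pi$ is $d$-ended by exhibiting $d$ distinct one-ended rays emanating through the $\Lambda_\pi$ block, and that it is a genuine limit but not itself of the form $\Gamma_\xi$. The main difficulty is the bookkeeping showing these are the \emph{only} new points: I must prove that any limit graph not in $\Sch(X^\N)$ arises from the spine degeneration, that the choice of root on $\tilde\Gamma_\pi$ accounts for the ``countably many points'' (countability coming from the countable vertex set of each $\tilde\Gamma_\pi$ and finitely many admissible $\pi$), and that the only surviving recurrence condition is ``$\pi$ repeats infinitely often in $\omega$,'' which is what makes the limit actually achievable along a subsequence.
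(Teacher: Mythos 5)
Your overall strategy for parts (2) and (3) is essentially the paper's: pull back convergence of rooted balls to convergence of the roots via Lemma~\ref{lem:common_prefix}, and identify the only non-trivial degeneration as the roots escaping along the spine, which forces $\omega_{k_n}$ to stabilize to a single $\pi$ occurring infinitely often and yields $\tilde\Gamma_\pi$. Where you genuinely diverge is part (1): the paper obtains injectivity by quoting that these groups are branch (for the allowed $(d,m)$) and that stabilizers of distinct boundary points of a branch group differ, whereas you derive it directly from Lemma~\ref{lem:common_prefix} applied to all radii. Your route is more self-contained and uses only the combinatorial structure already developed; note, however, that it silently relies on Lemma~\ref{lem:common_prefix} failing for $d=2,m=1$ (for the infinite dihedral group all rooted labeled balls of the line $\Gamma_\xi$ are isomorphic, so the lemma's conclusion is false there), so the correct reason the hypothesis enters your argument is that the proof of Lemma~\ref{lem:common_prefix} needs the copies $\Delta_\xi^k$ to be recoverable from the labeled ball, not merely that $d^m>2$ as you state.

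Two smaller points to repair. First, for $d=2$ and $\xi\in\Cof(1^\N)$ the graph $\Gamma_\xi$ is \emph{one}-ended (Theorem~\ref{thm:number_ends} gives $E_1=\Cof(1^\N)$ when $d=2$), not two-ended as you write; and ``a suitable ball pins down the entire graph'' is not yet an argument. The actual mechanism for isolation is that for $d=2$ the unique $B$-fixed ray is $1^\N$, so the all-loops vertex sits at a fixed finite distance from the root of $\Gamma_\xi$, a local feature shared by no other $\Sch(\eta)$ and by no $(\tilde\Gamma_\pi,v)$ (whose copy of $1^\N$ carries non-loop $B$-edges); equivalently, in the paper's case analysis the ``$\xi^{(n)}$ eventually $B$-fixed'' branch is empty when $d=2$. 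Second, for $d\ge 3$ your perturbation argument handles continuity points, but non-isolation of $\Sch(\Cof((d-1)^\N))$ needs the specific approximation by $B$-fixed points $(d-1)^nj\ldots$ with $j\notin\{0,d-1\}$ — which is exactly where $d\ge 3$ is used — so spell that case out rather than leaving it at ``genuine branching choices.'' With these repairs the proposal matches the paper's proof in substance.
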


\begin{proof}
	Injectivity of $\Sch$ follows from the fact that spinal groups except $d=2,m=1$ are  branch groups \cite{BGS03}, and the stabilizers of boundary points are all different for the action of a branch group on the boundary of the tree (see Proposition 2.2. in \cite{Gri11}). In Proposition~\ref{prop:Sch_continuous} we found the continuity points of $\Sch$.
	
	Let $(\xi^{(n)})_n$ be a sequence of continuity points of $\Sch$ in $X^\N$ such that $(\Gamma_{\xi^{(n)}}, \xi^{(n)})$ converges to some rooted graph $(\Gamma, \xi)$. For any $r \ge 0$, there exists some $N \ge 0$ such that, for every $n \ge N$, the balls $\B_{\xi^{(n)}}(r)$ are all isomorphic. By Lemma~\ref{lem:common_prefix}, for every $n \ge N$, all $\xi^{(n)}$ share the same prefix of length $\lfloor \log_2(r) \rfloor$. Hence, $(\xi^{(n)})_n$ converges to a point $\xi^{(\infty)} \in X^\N$.
	
	If $\xi^{(\infty)}$ is a continuity point, then by continuity $(\Gamma, \xi) = (\Gamma_{\xi^{(\infty)}}, \xi^{(\infty)})$, so assume the opposite, which means that $\xi^{(\infty)} \in \Cof((d-1)^\N)$. By continuity of the action, we can assume without loss of generality that $\xi^{(\infty)} = (d-1)^\N$. Now two things can happen: either $\xi^{(n)}$ is fixed by $B$ for all large enough $n$ or not. If that is the case, then $(\Gamma, \xi) = (\Gamma_{(d-1)^\N}, (d-1)^\N)$, so graphs corresponding to points in $\Cof((d-1)^\N)$ are not isolated. Notice that this cannot happen if $d=2$, as all continuity points are not fixed by $B$, and so they cannot approximate $(d-1)^\N$.

	Suppose now that $\xi^{(n)}$ is not fixed by $B$ for all large enough $n$. This means that $\xi^{(n)}$ has a prefix $(d-1)^{k_n}0$, for all $n$ large enough. Because the balls $\B_{\xi^{(n)}}(1)$ must all be isomorphic, this means that every $b \in B$ must act in the same way for all points, so $\omega_{k_n}$ must be the same epimorphism $\pi \in \Epi(B, A)$ for all $n$ large enough. Therefore, the graph $(\Gamma, \xi)$ coincides with $(\tilde{\Gamma}_\pi, \eta)$, with $\eta$ being any vertex of $\Lambda_\pi$, as in the statement.
\end{proof}

\begin{remark}
	Notice that the graphs $\tilde{\Gamma}_\pi$ arising in Theorem~\ref{thm:description_space_graphs} have the product $\Cof((d-1)^\N) \times X$ as vertex set and the following edges:
	\begin{itemize}
		\item For every $s \in S$, $\xi \in \Cof((d-1)^\N)\setminus\{(d-1)^\N\}$ and $i \in X$, an $s$-edge from $(\xi, i)$ to $(s\xi, i)$.

		\item For every $1 \le j \le d-1$ and $i \in X$, an $a^j$-edge from $((d-1)^\N, i)$ to $(a^j(d-1)^\N, i)$.

		\item For every $b \in B$ and $i \in X$, a $b$-edge from $((d-1)^\N, i)$ to $((d-1)^\N, \pi(b)i)$.
	\end{itemize}

	If we consider the map $(\xi, i) \mapsto \xi$, it becomes clear that these graphs are $d$-coverings of $\Gamma_{(d-1)^\N}$. 
	While the graphs $\Gamma_\xi$ are Schreier graphs of subgroups $\Stab_G(\xi)$, for $m=1$ the graphs $\tilde{\Gamma}_\pi$ are Schreier graphs of $\Stab_G(N(\xi))$, the pointwise stabilizer of a neighborhood $N(\xi)$ of a point $\xi \in \Cof((d-1)^\N)$. More generally, for $m \ge 1$, the Schreier graphs associated to $\Stab_G(N(\xi))$ are $d^{m-1}$-coverings of the graphs $\tilde{\Gamma}_\pi$.
	
\end{remark}

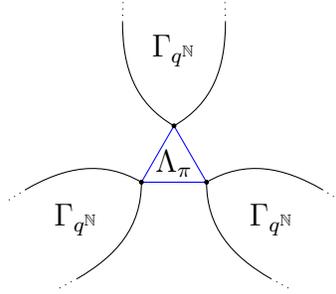
\begin{figure}[h]
	\centering
	\begin{tikzpicture}
		\pgfmathsetmacro{\r}{0.5}
		
		\draw[color=blue] (90:\r) -- (90 + 120:\r) -- (90 + 240:\r) -- cycle;
		\draw[fill=black] (90:\r) circle (0.05*\r);
		\draw[fill=black] (90 + 120:\r) circle (0.05*\r);
		\draw[fill=black] (90 + 240:\r) circle (0.05*\r);
		\draw (0,0) node {$\Lambda_\pi$};
		
		\draw (90:\r) to[out = 150, in=-90] (110:4*\r);
		\draw[dotted] (110:4*\r) -- (107.5:4.5*\r);
		\draw (90:\r) to[out = 30, in=-90] (70:4*\r);
		\draw[dotted] (70:4*\r) -- (72.5:4.5*\r);
		\draw (90:3*\r) node {$\Gamma_{q^{\N}}$};
		
		\draw (90 + 120 :\r) to[out = 150 + 120, in=-90 + 120] (110 + 120 :4*\r);
		\draw[dotted] (110 + 120:4*\r) -- (107.5 + 120:4.5*\r);
		\draw (90 + 120:\r) to[out = 30 + 120, in=-90 + 120] (70 + 120 :4*\r);
		\draw[dotted] (70 + 120:4*\r) -- (72.5 + 120:4.5*\r);
		\draw (90 + 120:3*\r) node {$\Gamma_{q^{\N}}$};
		
		\draw (90 + 240 :\r) to[out = 150 + 240, in=-90 + 240] (110 + 240 :4*\r);
		\draw[dotted] (110 + 240:4*\r) -- (107.5 + 240:4.5*\r);
		\draw (90 + 240:\r) to[out = 30 + 240, in=-90 + 240] (70 + 240 :4*\r);
		\draw[dotted] (70 + 240:4*\r) -- (72.5 + 240:4.5*\r);
		\draw (90 + 240:3*\r) node {$\Gamma_{q^{\N}}$};
	\end{tikzpicture}
  
	\caption{Graph $\tilde{\Gamma}_\pi$ for $d=3$.}
  
	\label{fig:Gamma_tilde_FG}
\end{figure}

\section{Number of ends of Schreier graphs}
\label{sec:number_of_ends}

When studying infinite graphs, an important invariant is the  number of ends. 

We say that a graph $\Gamma$ is $k$-ended if, for every vertex $v$, $\Gamma\setminus\{v\}$ has not more than $k$ infinite components, and this $k$ is minimal. 
The number of ends is a property of unlabeled, unrooted graphs, and loops or multiple edges do not play any role. Therefore, the number of ends of a Schreier graph $\Gamma_\xi$ of a spinal group depends only on $d$ and $\xi \in X^\N$, but not on $m$ or the sequence $\omega \in \Omega$. We can therefore count the number of ends for Schreier graphs of the spinal groups $G_d$ defined, for each $d\geq 2$, by $m = 1$ and $\omega$ the constant sequence. The groups $G_d$ happen to be automata groups, and we can apply the results on the ends of Schreier graphs of automata groups from \cite{BDN16}.

\begin{theorem}
\label{thm:number_ends}
Let $G_\omega$ be a spinal group and $\xi \in X^\N$. We partition $X^\N$ into $E_2 = X^*\{0,d-1\}^\N \setminus \Cof((d-1)^\N)$ and $E_1 = X^\N \setminus E_2$. Then $\Gamma_\xi$ is $2$-ended iff $\xi \in E_2$ and $\Gamma_\xi$ is $1$-ended iff $\xi \in E_1$.
\end{theorem}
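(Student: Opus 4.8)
The plan is to pass to \emph{unlabelled} graphs and to the simplest group in each family. Since the number of ends ignores labels, loops and edge multiplicities, and since for surjective $\pi$ the underlying simple graph of every block $\Lambda_\pi$ is the complete graph $K_d$ (each $j\in\{1,\dots,d-1\}$ is realised by some $b\in B$, giving every edge $i\to i+j$), the unlabelled graph $\Gamma_\xi$ depends only on $d$ and $\xi$, not on $m$ or $\omega$. I may therefore take $G=G_d$ ($m=1$, $\omega$ constant), as already observed before the statement; one could then simply invoke the ends computation for bounded automata groups of \cite{BDN16}, but I prefer a self-contained argument based on the recursive picture of Proposition~\ref{prop:copy}. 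I will also note once and for all that for these graphs the one-vertex count used in the statement agrees with the usual number of ends: a graph that is one-ended in the usual sense stays so after deleting a single vertex, and a bi-infinite line carrying only finite branches splits into exactly two infinite pieces.

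First I would fix the exhaustion of $\Gamma_\xi$ by the finite subgraphs $\Delta_\xi^n\cong\Gamma_n$ (Proposition~\ref{prop:copy}), each attached to $C_n:=\Gamma_\xi\setminus\Delta_\xi^n$ only through $u_n:=(d-1)^{n-1}0\,\sigma^n(\xi)$ (always) and $w_n:=(d-1)^n\sigma^n(\xi)$, the latter carrying outgoing non-loop edges iff $\sigma^n(\xi)$ is not fixed by $B$, i.e.\ iff $\sigma^n(\xi)=(d-1)^r0\cdots$. As $\Gamma_\xi$ is connected, every component of $C_n$ touches $\{u_n,w_n\}$; the $d-1$ edges leaving $u_n$ all land in the single connected piece $\Delta_\xi^{n+1}\setminus\Delta_\xi^n$ (the remaining $d-1$ copies joined through $\Lambda$, still connected after deleting one vertex since its simple graph is $K_d$); and the number of infinite components of $\Gamma_\xi\setminus K$ is nondecreasing in the finite set $K$, so the number of ends is $\lim_n(\#\text{ infinite components of }C_n)$. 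Now if $\xi_n\in\{1,\dots,d-2\}$ then $\sigma^n(\xi)$ begins with such a letter and is fixed by $B$, so $w_n$ is inactive and $C_n$ is connected; the same happens once $\sigma^n(\xi)=(d-1)^\N$, i.e.\ when $\xi\in\Cof((d-1)^\N)$. In either situation $C_n$ is connected for infinitely many $n$, hence $\Gamma_\xi$ is one-ended; these two cases are precisely $E_1$, which settles the ``$1$-ended'' half.

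For $\xi\in E_2$ there is an $N$ with $\xi_n\in\{0,d-1\}$ and $\sigma^n(\xi)\neq(d-1)^\N$ for all $n\ge N$, so both $u_n$ and $w_n$ are active. Here I track the backbone $P_n$, the geodesic of length $2^n-1$ from $u_n$ to $w_n$ (Remark~\ref{rem:diam}). Reading the $\Star$-decomposition of Proposition~\ref{prop:description_Gamma_n}, the two special vertices of $\Gamma_{n+1}$ lie in its $0$- and $(d-1)$-copies, so for $\xi_n\in\{0,d-1\}$ the block $\Delta_\xi^n$ sits on the backbone of $\Delta_\xi^{n+1}$: one checks $w_n=u_{n+1}$ if $\xi_n=0$ and $w_n=w_{n+1}$ if $\xi_n=d-1$, and in both cases $P_{n+1}\supset P_n$ shares the endpoint $w_n$ and extends past $u_n$ by a segment of length $2^n$. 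Tracking which physical side grows, the growth side switches exactly when $\xi_n=0$; since $\xi\in E_2\setminus\Cof((d-1)^\N)$ forces $\xi_n=0$ infinitely often, both sides grow without bound and $P_\infty=\bigcup_n P_n$ is a bi-infinite line. Finally, for $d\ge3$ the middle copies ($i\in\{1,\dots,d-2\}$) created at each level have an inactive $w$-vertex and are glued only at a single $\Lambda$-vertex of the backbone, so $\Gamma_\xi$ is the line $P_\infty$ decorated by \emph{finite} branches; deleting any vertex leaves at most the two tails of $P_\infty$ as infinite components, whence $\Gamma_\xi$ is exactly two-ended.

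The delicate point, and the step I expect to be the main obstacle, is the bookkeeping of the last paragraph: verifying the coincidences $w_n=u_{n+1}$ / $w_n=w_{n+1}$ together with the resulting switching rule, and proving that every vertex off the backbone lies in a finite branch hanging from a single cut vertex, so that the branches—although infinitely many and of unbounded size—never merge the two tails nor manufacture a third end. This is where the asymmetry between the letters $0$ and $d-1$ and the precise location of the special vertices inside the $\Star$-construction must be handled carefully; once this is in place the remainder is the routine monotonicity/exhaustion argument of the second paragraph.
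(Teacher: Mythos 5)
Your argument is correct in substance but takes a genuinely different route from the paper. The paper's proof consists only of the reduction you make in your first paragraph (ends are insensitive to labels, loops and multiplicities, and since every $\Lambda_\pi$ with $\pi$ surjective has $K_d$ as underlying simple graph, the unlabelled $\Gamma_\xi$ depends only on $d$ and $\xi$, so one may pass to $G_d$), followed by: the $d=2$ case via the infinite dihedral group, a citation of Section~5.2 of \cite{BDN16} for $d=3$, and the remark that the same proof works for $d>3$ after replacing $2$ by $d-1$ and $1$ by $\{1,\dots,d-2\}$. You instead supply a self-contained proof via the exhaustion by $\Delta_\xi^n$ and the backbone geodesic; in effect you are reconstructing the content of the cited argument inside the framework of Propositions~\ref{prop:description_Gamma_n} and~\ref{prop:copy}, which makes the theorem independent of \cite{BDN16} at the cost of the bookkeeping you flag at the end. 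Your verifications all check out: $w_n=u_{n+1}$ when $\xi_n=0$ and $w_n=w_{n+1}$ when $\xi_n=d-1$ are immediate string identities, and the switching rule is as you describe. Two remarks that would streamline the delicate last step. First, the upper bound of two ends holds for \emph{every} $\xi$ without any branch analysis: every component of $C_n=\Gamma_\xi\setminus\Delta_\xi^n$ meets the neighbourhood of $u_n$ or of $w_n$, and the neighbours of $u_n$ (resp.\ $w_n$) in $C_n$ all lie in a single $\Lambda$-block minus one vertex, whose simple graph $K_{d-1}$ is connected; hence $C_n$ has at most two components, and by monotonicity at most two ends. For $\xi\in E_2$ it then only remains to show that these two components are distinct and both infinite for large $n$, which is exactly what your bi-infinite backbone (both sides growing because $\xi_n=0$ infinitely often) delivers. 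Second, a small imprecision: a middle copy at level $n+1$ is glued at its own $u$-vertex, which is \emph{not} a backbone vertex but a vertex of $\Lambda_\xi^{n-1}$ adjacent to the two (consecutive) backbone vertices of that block; the branches therefore hang off a single $\Lambda$-block rather than off a single cut vertex of $P_\infty$. This does not affect the conclusion, since such a branch can only reconnect two adjacent backbone vertices and so can neither bridge the two tails after a vertex deletion nor create a third end.
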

\begin{proof}
As mentioned, it suffices to show the claim for the groups $G_d$, $d \ge 2$. $G_2$ is the infinite dihedral group, for which $X^\N$ not in $\Cof(1^\N)$ has trivial stabilizer, so all the corresponding Schreier graphs are isomorphic to the Cayley graph, a two-ended line alternating $a$ and $b$-edges.  Section 5.2 in~\cite{BDN16} shows it for the Fabrykowski-Gupta group $G_3$. The same proof applies for $d > 3$, by replacing $2$ by $d-1$ and $1$ by $\{1, \dots, d-2\}$.
\end{proof}

The set $X^\N$ is naturally equipped with the Bernoulli measure $\mu$. As the action of $G$ is ergodic with respect to $\mu$, we can speak about the typical number of ends.  

\begin{lemma}
	\label{lem:measures}
	If $E \subset X^\N$ such that $\mu(E) = 0$, then $\mu(X^*E) = 0$.
\end{lemma}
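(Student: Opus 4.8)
The plan is to prove that $E \subseteq X^\N$ with $\mu(E) = 0$ implies $\mu(X^*E) = 0$, where $X^*E = \bigcup_{w \in X^*} wE$ is the set of all sequences obtained by prepending an arbitrary finite word to a point of $E$. The key observation is that $X^*E$ is a countable union, indexed by the finite words $w \in X^*$ (of which there are countably many), of the sets $wE = \{w\eta \mid \eta \in E\}$. Since a countable union of null sets is null, it suffices to show that $\mu(wE) = 0$ for every fixed $w \in X^*$.

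For a fixed word $w \in X^n$, the set $wE$ lives inside the cylinder $wX^\N = \{\zeta \in X^\N \mid \zeta_0\dots\zeta_{n-1} = w\}$. The first step is to recall that the Bernoulli measure $\mu$ factors as a product: a point $\zeta \in X^\N$ is determined by its prefix of length $n$ together with its shift $\sigma^n(\zeta) \in X^\N$, and under $\mu$ these are independent with $\mu(wX^\N) = d^{-n}$. Concretely, the map $\eta \mapsto w\eta$ is a measurable bijection from $X^\N$ onto the cylinder $wX^\N$, and it pushes $\mu$ forward to the conditional measure $d^{-n}\mu$ restricted to that cylinder; equivalently, $\mu(wA) = d^{-n}\mu(A)$ for every measurable $A \subseteq X^\N$. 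This scaling relation is the heart of the argument and follows directly from the product structure of the Bernoulli measure.

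Applying this with $A = E$ gives $\mu(wE) = d^{-n}\mu(E) = 0$ since $\mu(E) = 0$. The main step I would carry out is therefore: first verify the scaling identity $\mu(wA) = d^{-|w|}\mu(A)$ from the definition of the Bernoulli measure on cylinders (this is where any real content lies, though it is routine), and then conclude by the countable subadditivity of $\mu$:
\[
\mu(X^*E) = \mu\Big( \bigcup_{w \in X^*} wE \Big) \le \sum_{w \in X^*} \mu(wE) = \sum_{w \in X^*} d^{-|w|}\mu(E) = 0.
\]

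I do not expect any serious obstacle here: the statement is a soft measure-theoretic fact, and the only point requiring care is the measurability of $wE$, which holds because $\eta \mapsto w\eta$ is continuous (hence Borel measurable) and $E$ is assumed measurable, so its image under this embedding is a Borel subset of the compact metrizable space $X^\N$. The one thing worth stating cleanly is that $X^*$ is countable, so that countable subadditivity genuinely applies; everything else is a direct consequence of the product form of $\mu$.
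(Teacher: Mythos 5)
Your proposal is correct and follows essentially the same route as the paper: decompose $X^*E$ as a countable union of the sets $wE$ over $w \in X^*$, use the Bernoulli scaling identity $\mu(wE) = d^{-|w|}\mu(E)$, and conclude by countable subadditivity. The extra remarks on measurability are fine but not needed beyond what the paper already assumes.
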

\begin{proof}
	We can decompose $X^*E = \underset{n \ge 0}{\bigsqcup} \: \underset{w \in X^n}{\bigsqcup} wE$, so
	\[
	\mu(X^*E) = \underset{n \ge 0}{\sum} \: \underset{w \in X^n}{\sum} \mu(wE) = \underset{n \ge 0}{\sum} \: \underset{w \in X^n}{\sum} \frac{\mu(E)}{d^n} = \underset{n \ge 0}{\sum} \mu(E) = 0.
	\]
\end{proof}

\begin{theorem}
	\label{thm:measures_ends}
	If $d = 2$, then $\mu(E_1) = 0$ and $\mu(E_2) = 1$. If $d \ge 3$, then $\mu(E_1) = 1$ and $\mu(E_2) = 0$.
\end{theorem}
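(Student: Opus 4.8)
The plan is to compute the measure of the generating set $\{0,d-1\}^\N$ directly and then propagate it through the prefix operation using Lemma~\ref{lem:measures}, treating the degenerate case $d=2$ separately since there the formula for $E_2$ collapses.

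For $d \ge 3$ I would first observe that $\{0,d-1\}^\N = \bigcap_{n\ge 0} \{0,d-1\}^n X^\N$, a nested intersection of cylinders, so that
\[
\mu(\{0,d-1\}^\N) = \lim_{n\to\infty}\left(\frac{2}{d}\right)^n = 0,
\]
where the equality $\mu(\{0,d-1\}^n X^\N) = (2/d)^n$ holds because the Bernoulli measure assigns mass $2/d$ to each coordinate landing in $\{0,d-1\}$, and $2/d < 1$ precisely when $d \ge 3$. Applying Lemma~\ref{lem:measures} with $E = \{0,d-1\}^\N$ gives $\mu(X^*\{0,d-1\}^\N) = 0$. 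Since $E_2 = X^*\{0,d-1\}^\N \setminus \Cof((d-1)^\N) \subset X^*\{0,d-1\}^\N$, monotonicity yields $\mu(E_2) = 0$, and hence $\mu(E_1) = \mu(X^\N) - \mu(E_2) = 1$.

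For $d = 2$ the set $\{0,d-1\} = \{0,1\} = X$ is the whole alphabet, so $\{0,d-1\}^\N = X^\N$ and $X^*\{0,d-1\}^\N = X^\N$ (the empty prefix already suffices). Thus $E_2 = X^\N \setminus \Cof((d-1)^\N)$. It remains to note that the cofinality class $\Cof((d-1)^\N)$ is countable: it is the increasing union over $r \ge 0$ of the finite sets $\{\eta : \sigma^r(\eta) = \sigma^r((d-1)^\N)\}$, each of cardinality $d^r$. A countable set is $\mu$-null, so $\mu(E_2) = 1$ and $\mu(E_1) = 0$.

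I do not expect a genuine obstacle here, as the argument reduces to an elementary Bernoulli computation plus Lemma~\ref{lem:measures}. The only point requiring care is the bookkeeping at $d=2$, where the defining expression for $E_2$ degenerates so that the measure flips compared to $d \ge 3$; one must not blindly reuse the $d\ge 3$ estimate, since $2/d = 1$ makes $\{0,d-1\}^\N$ full rather than null, and the null contribution instead comes entirely from the countable cofinality class.
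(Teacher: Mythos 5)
Your proposal is correct and follows essentially the same route as the paper: for $d\ge 3$ it shows $\mu(\{0,d-1\}^\N)=0$ (you via a limit of cylinder measures, the paper via the self-similarity identity $\mu(\{0,d-1\}^\N)=\tfrac{2}{d}\mu(\{0,d-1\}^\N)$) and then applies Lemma~\ref{lem:measures}, while for $d=2$ it reduces to the nullity of the countable set $\Cof(1^\N)$, which the paper obtains by applying the same lemma to the singleton $\{1^\N\}$. These are only cosmetic differences.
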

\begin{proof}
	For $d=2$, we have $E_1 = \Cof(1^\N)$ and $E_2 = X^\N \setminus \Cof(1^\N)$. We use Lemma~\ref{lem:measures} with $E = \{1^\N\}$ provided that $E_1 = X^*E$.

	For $d \ge 3$, we have $\mu(E_2) \le \mu(X^*\{0, d-1\}^\N)$. Decomposing after the first digit, we obtain $\mu(\{0, d-1\}^\N) = \frac{2}{d} \mu(\{0, d-1\}^\N)$, so $\mu(\{0, d-1\}^\N) = 0$. Again by Lemma~\ref{lem:measures}, $\mu(E_2) = 0$.
\end{proof}

Theorem~\ref{thm:measures_ends} shows a difference between binary spinal groups and the rest. For spinal groups with $d = 2$, all Schreier graphs are two-ended except for one orbit, while for $d \ge 3$ the set of boundary points whose Schreier graph is one-ended has measure one, even though there are infinitely many orbits with two-ended graphs.

\section{Isomorphism classes of unlabeled Scheier graphs}
\label{sec:isomorphisms}

The goal of this section is to determine when two Schreier graphs $\Gamma_\xi$ and $\Gamma_\eta$, for $\xi, \eta \in X^\N$ are isomorphic. If we consider $(\Gamma_\xi, \xi)$ as rooted, directed, labeled graphs, the answer is immediate: if and only if 
$\xi=\eta$. For any branch group, Proposition 2.2 in~\cite{Gri11} implies that the stabilizers of different points of $X^\N$ are different. Since the graph $\Gamma_\xi$ is the Schreier graph of the group with respect to the stabilizer of the point $\xi$ for the action of $G_\omega$ on the boundary of tree, $(\Gamma_\xi, \xi)$ and $(\Gamma_\eta, \eta)$ will not be isomorphic as rooted, directed, labeled graphs, unless $\xi = \eta$.

The only exception to the previous statement is the infinite dihedral group, which is the spinal group with $d=2, m=1$ and the only spinal group which is not branch. As mentioned above, all points of $X^\N$ not in $\Cof(1^\N)$ give rise to Schreier graphs isomorphic to the Cayley graph, so in this case we do have isomorphic rooted, directed, labeled Schreier graphs. Some arguments we are going to use later do not apply to this case, so we will exclude this example in the forthcoming statements.

For the rest of the section, we will explore under which conditions two Schreier graphs can be isomorphic as rooted, undirected, unlabeled graphs.

\begin{proposition}
\label{prop:isomorphisms_d2}
Let $d = 2$, $m\geq 2$ and $\xi, \eta \in X^\N$. If $\xi \neq \eta$, $(\Gamma_\xi, \xi)$ and $(\Gamma_\eta, \eta)$ are isomorphic as rooted, unlabeled graphs iff $\xi, \eta \not \in \Cof(1^\N)$.
\end{proposition}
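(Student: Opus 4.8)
The plan is to read the precise local structure of $\Gamma_\xi$ off Proposition~\ref{prop:description_Gamma_xi} (specialised to $d=2$) and to observe that, for $\xi \notin \Cof(1^\N)$, this structure is the \emph{same} decorated bi-infinite line for all such $\xi$, while for $\xi \in \Cof(1^\N)$ one gets a rigid ray. Recall that for $d=2$ the only $B$-fixed point is $1^\N$, and a vertex $\eta$ has a unique neighbour if and only if it is $B$-fixed: indeed $\eta$ always has the $a$-neighbour $a\eta$ (first letter flipped), and, writing $\eta = 1^n0\dots$ with $n$ leading $1$'s, the $2^{m-1}$ generators $b \in B$ with $\omega_n(b)=a$ all send $\eta$ to the single vertex $\eta'$ obtained by flipping the letter in position $n+1$, while the remaining $2^{m-1}-1$ nontrivial $b \in \Ker(\omega_n)$ give loops. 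Since every $\omega_n$ is an epimorphism onto $\Z/2\Z$, the counts $2^{m-1}$ and $2^{m-1}-1$ depend neither on $n$ nor on $\omega$.

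First I would treat $\xi \notin \Cof(1^\N)$. Then $1^\N \notin \Cof(\xi)$, so no vertex is $B$-fixed, and every vertex has exactly two distinct neighbours $a\eta$ and $\eta'$, distinct from one another (position $0$ versus position $n+1\ge 1$). Hence the underlying simple graph is $2$-regular, connected and infinite, i.e.\ a bi-infinite line; walking along it, the two edges at each vertex are one single $a$-edge and one $2^{m-1}$-fold $B$-bundle, so the two edge-types must strictly alternate, and every vertex carries $2^{m-1}-1$ loops. This determines a single decorated line $L$, independent of $\xi$. It then remains to prove that $L$ is vertex-transitive. Modelling $L$ on $\Z$ with the $a$-edge on $\{i,i+1\}$ when $i$ is even and the bundle when $i$ is odd, I would check that the translation $i \mapsto i+2$ and the reflection $i \mapsto 1-i$ through the midpoint of an $a$-edge both preserve edge-multiplicities (here $m\ge 2$ is essential: the single $a$-edge and the $2^{m-1}\ge 2$-fold bundle are distinguishable, which makes $L$ nontrivial, yet the reflection still matches $a$-edges to $a$-edges); these two maps generate a group acting transitively on $\Z$. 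Consequently $(\Gamma_\xi,\xi)\cong (L,\cdot)\cong(\Gamma_\eta,\eta)$ whenever $\xi,\eta \notin \Cof(1^\N)$, which is the ``if'' direction.

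For the ``only if'' direction I would argue by contrapositive, assuming that some endpoint lies in $\Cof(1^\N)$. As $\Cof(1^\N)$ is a single orbit, for $\xi\in\Cof(1^\N)$ the graph $\Gamma_\xi$ equals the fixed graph $\Gamma_{1^\N}$ on vertex set $\Cof(1^\N)$; by the local analysis its $B$-fixed point $1^\N$ is the only vertex with a single neighbour, all others having two, so $\Gamma_{1^\N}$ is a one-ended ray with endpoint $1^\N$. If exactly one of $\xi,\eta$ lies in $\Cof(1^\N)$, then one graph has a degree-one vertex and the other (a bi-infinite line) has none, so their underlying simple graphs --- hence the decorated graphs --- are not isomorphic. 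If both lie in $\Cof(1^\N)$ with $\xi\neq\eta$, any rooted isomorphism $(\Gamma_{1^\N},\xi)\to(\Gamma_{1^\N},\eta)$ would induce a simple-graph automorphism of the ray sending $\xi$ to $\eta$; but the endpoint $1^\N$ is the unique degree-one vertex and so is fixed, and on a ray the distance to the endpoint is a complete invariant of a vertex, so every vertex is fixed, forcing $\xi=\eta$, a contradiction.

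The main obstacle is the first paragraph's claim that all two-ended Schreier graphs coincide with one universal decorated line $L$: one must verify carefully that the local data at every non-endpoint vertex is literally identical (constant loop count $2^{m-1}-1$ and constant bundle size $2^{m-1}$, relying only on each $\omega_n$ being an epimorphism), and that the global alternation of single edges and bundles is forced rather than merely possible. After that the delicate point is the vertex-transitivity of $L$: the reflection through the midpoint of an $a$-edge is the move transporting ``$a$-on-the-right'' vertices to ``$a$-on-the-left'' vertices, and it works precisely because $m\ge 2$ keeps the two edge-types distinct while still symmetric under this reflection. The collapse of everything to a line is special to $d=2$, where each vertex admits only two possible neighbours.
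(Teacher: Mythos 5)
Your proof is correct and follows the same route as the paper's: for $\xi \notin \Cof(1^\N)$ the graph is the universal decorated bi-infinite line (one $a$-edge, one $2^{m-1}$-fold $B$-bundle and $2^{m-1}-1$ loops at each vertex), and for $\xi \in \Cof(1^\N)$ it is a ray on which the distance to the unique degree-one vertex $1^\N$ rigidifies everything. You are in fact more careful than the paper on one point: the published proof only says that the line ``does not depend on $\xi$'', whereas you explicitly verify vertex-transitivity of the decorated line, which is what is actually needed to get the \emph{rooted} isomorphism for arbitrarily placed roots.
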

\begin{proof}
For $\xi \not \in \Cof(1^\N)$, $\Gamma_\xi$ is a two-ended line. Each vertex has $2^{m-1} - 1$ loops, one edge to one neighbor and $2^{m-1}$ edges to the other neighbor. The graph does not depend on $\xi$, so they are all isomorphic. If $\xi, \eta \in \Cof(1^\N)$, the distance to $1^\N$ is different for each of them, so no isomorphism is possible.
\end{proof}

Let us now desbribe the possible isomorphisms of rooted, undirected, unlabeled Schreier graphs for spinal groups with $d\geq 3$. 
 Let us start by giving some necessary conditions.

\begin{lemma}
	\label{lem:Delta_to_Delta}
	
	Let $\xi, \eta \in X^\N$, and let $\varphi: (\Gamma_\xi, \xi) \to (\Gamma_\eta, \eta)$ be an isomorphism of rooted, undirected, unlabeled graphs. Let also $\xi' \in \Cof(\xi)$, $\eta' = \varphi(\xi')$ and $n \ge 0$. Then, $\varphi(\Lambda_{\xi'}^n) = \Lambda_{\eta'}^n$ and $\varphi(\Delta_{\xi'}^n) = \Delta_{\eta'}^n$.
\end{lemma}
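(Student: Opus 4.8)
<br>

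The plan is to show that the isomorphism $\varphi$ must respect the structural decomposition of $\Gamma_\xi$ into its $\Delta$-blocks glued along $\Lambda$-blocks, described after Proposition~\ref{prop:copy}. The key combinatorial feature I would exploit is that the $\Lambda_{\xi'}^n$ subgraphs are precisely the maximal subgraphs consisting of $d$ vertices joined only by multi-edges (the $B$-edges), while the $\Delta_{\xi'}^n$ are the unique connected components that remain after one deletes, at each relevant vertex, the cut-edges leaving the block. Since $\varphi$ is a graph isomorphism, it preserves vertex degrees, the local multiplicity of parallel edges, and the global end-structure; my strategy is to characterize both $\Lambda_{\xi'}^n$ and $\Delta_{\xi'}^n$ purely in terms of such isomorphism-invariant data, so that $\varphi$ is forced to carry one to the other.

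First I would set up an induction on $n$. The base case $n=0$ identifies $\Lambda_{\xi'}^0$ as the unique $\Lambda$-block containing $\xi'$ (when $\xi'$ is not fixed by $B$) or handles the fixed case separately, and $\Delta_{\xi'}^0 = \{\xi'\}$ trivially maps to $\Delta_{\eta'}^0 = \{\eta'\}$ since $\varphi(\xi')=\eta'$. For the inductive step I would use Remark~\ref{rem:balls}: the block $\Delta_{\xi'}^{n+2}$ is exactly the union of the balls of radius $2^{n+1}-1$ centered at the vertices of $\Lambda_{\xi'}^n$, and these radii are determined by the diameter bound $\diam(\Gamma_k)=2^k-1$ from Remark~\ref{rem:diam}. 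Because $\varphi$ is an isometry for the graph metric (isomorphisms preserve distances) and preserves balls, knowing $\varphi(\Lambda_{\xi'}^n)=\Lambda_{\eta'}^n$ lets me conclude $\varphi(\Delta_{\xi'}^{n+2})=\Delta_{\eta'}^{n+2}$ by applying $\varphi$ to the ball decomposition. The $\Lambda$-blocks themselves I would pin down by their intrinsic signature: within a copy $\Delta^n$, a $\Lambda^n$-block is the distinguished set of $d$ mutually adjacent vertices sitting at the gluing locus, recognizable because it is the unique place where the block $\Delta^{n}$ attaches to the next block $\Delta^{n+1}$ through $d$-fold parallel edges.

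The main obstacle I anticipate is making rigorous the claim that $\varphi$ cannot shuffle a $\Lambda$-block of one level to a $\Lambda$-block of a different level, nor split a $\Delta$-block across the image. The danger is that the abstract graph has enough symmetry that, a priori, an isomorphism could map, say, $\Lambda_{\xi'}^n$ onto part of $\Lambda_{\eta'}^{n'}$ with $n\neq n'$. To rule this out I would argue by the sizes of the attached blocks: the $\Lambda^n$-block separates $\Gamma_\xi$ into components whose finite side has exactly $d^{n+2}-$many (up to the removed loops) vertices forming $\Delta^{n+2}$, while the infinite side is the complement; since $\varphi$ preserves the cardinalities of finite components obtained by deleting a separating $\Lambda$-block, the level index $n$ is an isomorphism invariant, and the matching of levels is forced. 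Once the levels are matched, the fact that $\Delta^n$ is connected and that $\varphi$ is a bijection carrying its boundary cut-vertices correctly gives $\varphi(\Delta_{\xi'}^n)=\Delta_{\eta'}^n$, and by the shift-independence noted after the definition (that $\Delta_\eta^n$ and $\Lambda_\eta^n$ depend only on the relevant finite prefix of $\eta$), the conclusion holds for every $\xi'\in\Cof(\xi)$ uniformly.
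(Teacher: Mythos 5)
Your overall architecture --- recognize the $\Lambda$-blocks by an isomorphism-invariant signature, match their levels by a counting argument, then recover $\varphi(\Delta_{\xi'}^{n+2})=\Delta_{\eta'}^{n+2}$ from the ball decomposition of Remark~\ref{rem:balls} --- is the same as the paper's, and that last step is carried out exactly as you describe. But the two claims on which your argument actually rests are both incorrect as stated, and they are precisely where the work lies. First, ``maximal sets of $d$ vertices joined only by multi-edges'' does not single out the $\Lambda$-blocks: in the undirected unlabeled graph, two vertices of the $A$-clique $\Delta_{\xi'}^1$ at cyclic distance $j$ are joined by the pair $a^j,a^{d-j}$, i.e.\ by a double edge, and for $m=1$ the $B$-cliques have exactly the same multiplicity~$2$, so edge multiplicities alone cannot tell an $A$-clique from a $B$-clique. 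The paper gets around this by first proving $\varphi(\Delta_{\xi'}^1)=\Delta_{\eta'}^1$ --- using that $1\sigma(\xi')$ is fixed by $B$, that being fixed by $B$ is detected by the loop count ($d^m-1$ versus $d^{m-1}-1$), and that $\Delta_{\xi'}^1=\B_{1\sigma(\xi')}(1)$ --- and only then deducing from the decomposition $\B_{\xi''}(1)=\Delta_{\xi''}^1\sqcup(\Lambda_{\xi'}^n\setminus\{\xi''\})$ that $\Lambda_{\xi'}^n$ must land in some $\Lambda_{\eta'}^k$. You need a substitute for this step; note also that $\Delta^1$ is not covered by your inductive step $\Lambda^n\Rightarrow\Delta^{n+2}$, so it requires separate treatment anyway.

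Second, your level-matching invariant rests on a false structural claim: deleting $\Lambda_{\xi'}^n$ does \emph{not} leave a finite side ``forming $\Delta^{n+2}$'' of size $d^{n+2}$. The sub-block $X^{n+1}0\sigma^{n+2}(\xi')$ of $\Delta_{\xi'}^{n+2}$ contains the vertex $(d-1)^{n+1}0\sigma^{n+2}(\xi')$, which belongs to $\Lambda^{n+1}$ and therefore keeps that sub-block attached to the infinite part of the graph after the deletion; the finite components account for at most $(d-1)(d^{n+1}-1)$ vertices, and establishing even that requires checking connectivity of $\Gamma_{n+1}$ minus a vertex. The idea can probably be repaired (the total size of the finite components still determines $n$), but the paper's argument is more local and cleaner: assuming $\varphi(\Lambda_{\xi'}^n)=\Lambda_{\eta'}^k$ with $k<n$, it compares $\bigl|\bigcup_{v}\B_v(2^{k+1})\bigr|$ over the two blocks and finds that passing from radius $2^{k+1}-1$ to $2^{k+1}$ adds $d(d-1)$ vertices around $\Lambda_{\xi'}^n$ but only $p(d-1)$ with $p\le 2<d$ around $\Lambda_{\eta'}^k$, a contradiction. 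I would adopt that count in place of the separation argument.
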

\begin{proof}
	We will abuse notation and denote subgraphs and their vertex sets in the same way. Notice that, even though we consider isomorphisms between unlabeled graphs, vertices that are fixed by $B$ must be mapped to vertices that are fixed by $B$, otherwise they have a different number of loops. The second statement is trivial for $n=0$, as $\xi'$ is mapped to $\eta' = \varphi(\xi')$. Let us show it for $n=1$. 

	We know that $\Delta_{\xi'}^1 = \{0\sigma(\xi'), \dots, (d-1)\sigma(\xi')\}$. Since $1\sigma(\xi')$ is fixed by $B$,  $\B_{1\sigma(\xi')}(1) = \Delta_{\xi'}^1$. Hence
	\[
	\varphi(\Delta_{\xi'}^1) = \varphi(\B_{1\sigma(\xi')}(1)) = \B_{\varphi(1\sigma(\xi'))}(1) \supset \Delta_{\varphi(1\sigma(\xi'))}^1 = \Delta_{\eta'}^1.
	\]
	The last equality comes from the fact that $\varphi(1\sigma(\xi'))$ is also fixed by $B$, and so can only be joined by an $A$-edge to $\eta'$. The other inclusion then follows from the cardinality of the sets.

	Let us now prove the first statement for all $n \ge 0$. Let $\xi'' \in \Lambda_{\xi'}^n$ and $\eta'' = \varphi(\xi'')$, therefore $\xi'' = (d-1)^n0i\sigma^{n+2}(\xi')$. Since it is not fixed by $B$, $\eta''$ also is not fixed by $B$, so $\eta'' \in \Lambda_{\eta'}^k$ for some $k \ge 0$. We decompose $\B_{\xi''}(1) = \Delta_{\xi''}^1 \sqcup \left( \Lambda_{\xi'}^n \setminus \{\xi''\} \right)$ and similarly $\B_{\eta''}(1) = \Delta_{\eta''}^1 \sqcup \left( \Lambda_{\eta'}^k \setminus \{\eta''\} \right)$. The isomorphism $\varphi$ maps one ball onto the other, but by the previous case, it maps $\Delta_{\xi''}^1$ to $\Delta_{\eta''}^1$. Hence, it must map $\Lambda_{\xi'}^n$ to $\Lambda_{\eta'}^k$. We can suppose without loss of generality that $k \le n$. To show $k = n$, let us suppose $k+1 \le n$ for a contradiction. Using Remark~\ref{rem:balls}, notice that
	\[
	\left| \bigcup_{v \in \Lambda_{\xi'}^n} \B_v (2^{k+1}) \right| = \left| \bigcup_{v \in \Lambda_{\xi'}^n} \B_v (2^{k+1} - 1) \right| + d(d-1) =
	\]\[
	= \left| X^{k+1}(d-1)^{n-k-1}0X\sigma^{n+2}(\xi') \right| + d(d-1) = d^{k+2} + d(d-1),
	\]
	while
	\[
	\left| \bigcup_{v \in \Lambda_{\eta'}^k} \B_v (2^{k+1}) \right| = \left| \bigcup_{v \in \Lambda_{\eta'}^k} \B_v (2^{k+1} - 1) \right| + p(d-1) =
	\]\[
	= \left| \Delta_{\eta'}^{k+2} \right| + p(d-1) = d^{k+2} + p(d-1),
	\]
	with $p = 1$ or $p = 2$ depending on whether $\sigma(\eta')$ is or is not fixed by $B$, respectively. In any case, one ball must be mapped onto the other, so $p = d \ge 3$, which is a contradiction. Therefore $\Lambda_{\xi'}^n$ must be mapped to $\Lambda_{\eta'}^n$.
	
	Finally, for $n \ge 2$ and again by Remark~\ref{rem:balls} we have
	\[
	\varphi(\Delta_{\xi'}^n) = \varphi\left( \bigcup_{v \in \Lambda_{\xi'}^{n-2}} \B_v (2^{n-1} - 1) \right) = \bigcup_{v \in \Lambda_{\xi'}^{n-2}} \varphi\left( \B_v (2^{n-1} - 1) \right) =
	\]\[
	= \bigcup_{v \in \Lambda_{\xi'}^{n-2}} \B_{\varphi(v)} (2^{n-1} - 1) = \bigcup_{v \in \Lambda_{\eta'}^{n-2}} \B_v (2^{n-1} - 1) = \Delta_{\eta'}^n.
	\]
\end{proof}

The important consequence of this Lemma is that, even if we look at unlabeled graphs, isomorphisms are not allowed to map $A$-edges to $B$-edges or viceversa. They are not allowed to map copies of $\Gamma_n$ to anything which is not a copy of $\Gamma_n$, which is quite restrictive.

\begin{lemma}
	\label{lem:same_zeros}
	Let $\xi, \eta \in X^\N$, and let $\varphi: (\Gamma_\xi, \xi) \to (\Gamma_\eta, \eta)$ be an isomorphism of rooted, unlabeled graphs. For every $n \ge 0$, $\xi_n = 0 \Longleftrightarrow \eta_n = 0$.
\end{lemma}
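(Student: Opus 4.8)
The plan is to reduce the statement to a single membership question that the isomorphism $\varphi$ is forced to respect, using the two families of subgraphs $\Delta_\xi^n$ and $\Lambda_\xi^n$ together with Lemma~\ref{lem:Delta_to_Delta}. Fix $n \ge 0$ and set $v = (d-1)^n0\sigma^{n+1}(\xi)$. First I would record the purely combinatorial equivalence
\[
\xi_n = 0 \iff v \in \Delta_\xi^n.
\]
Indeed, as vertex sets $\Delta_\xi^n = X^n\sigma^n(\xi) = X^n\xi_n\sigma^{n+1}(\xi)$, so $v = (d-1)^n0\sigma^{n+1}(\xi)$ lies in $\Delta_\xi^n$ precisely when $(d-1)^n0 = w\xi_n$ for some $w \in X^n$, i.e.\ when $\xi_n = 0$ (and then $w = (d-1)^n$). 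The analogous equivalence holds on the $\eta$ side with $v' = (d-1)^n0\sigma^{n+1}(\eta)$ and $\Delta_\eta^n$.

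Next I would show that $\varphi(v) = v'$. The key point is that $v$ is the \emph{unique} common vertex of $\Lambda_\xi^n$ and $\Delta_\xi^{n+1}$. A direct computation with the defining words gives $\Lambda_\xi^n \cap \Delta_\xi^{n+1} = \{v\}$: a vertex $(d-1)^n0i\sigma^{n+2}(\xi)$ of $\Lambda_\xi^n$ lies in $\Delta_\xi^{n+1} = X^{n+1}\sigma^{n+1}(\xi)$ if and only if its tail after the first $n+1$ letters equals $\sigma^{n+1}(\xi) = \xi_{n+1}\sigma^{n+2}(\xi)$, which forces $i = \xi_{n+1}$ and hence the single vertex $(d-1)^n0\xi_{n+1}\sigma^{n+2}(\xi) = v$. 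Applying Lemma~\ref{lem:Delta_to_Delta} with $\xi' = \xi$ and $\eta' = \varphi(\xi) = \eta$ gives $\varphi(\Lambda_\xi^n) = \Lambda_\eta^n$ and $\varphi(\Delta_\xi^{n+1}) = \Delta_\eta^{n+1}$, so since $\varphi$ is a bijection
\[
\varphi(v) \in \varphi\left(\Lambda_\xi^n \cap \Delta_\xi^{n+1}\right) = \Lambda_\eta^n \cap \Delta_\eta^{n+1} = \{v'\},
\]
whence $\varphi(v) = v'$.

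Finally I would combine the two steps. Lemma~\ref{lem:Delta_to_Delta} also gives $\varphi(\Delta_\xi^n) = \Delta_\eta^n$, so using $\varphi(v) = v'$ and injectivity of $\varphi$ membership is preserved:
\[
v \in \Delta_\xi^n \iff \varphi(v) \in \varphi(\Delta_\xi^n) \iff v' \in \Delta_\eta^n.
\]
Chaining this with the first equivalence on both sides yields $\xi_n = 0 \iff v \in \Delta_\xi^n \iff v' \in \Delta_\eta^n \iff \eta_n = 0$, which is the claim. The argument is uniform in $n \ge 0$ (for $n = 0$ one has $\Delta_\xi^0 = \{\xi\}$ and $v = 0\sigma(\xi)$, and the same word computations apply verbatim), so no induction is required.

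The only genuine work is locating the correct combinatorial invariant; once found, everything is formal. The content is the observation that the single vertex $(d-1)^n0\sigma^{n+1}(\xi)$ is simultaneously \emph{pinned down} by the two subgraph families (it is the unique point of $\Lambda_\xi^n \cap \Delta_\xi^{n+1}$, hence its image is forced) and \emph{detects} the value $\xi_n$ (through its membership in $\Delta_\xi^n$). I expect the main thing to verify carefully is the two vertex-word computations above, namely $v \in \Delta_\xi^n \Leftrightarrow \xi_n = 0$ and $\Lambda_\xi^n \cap \Delta_\xi^{n+1} = \{v\}$; both are routine once the subgraphs are written out as sets of infinite words.
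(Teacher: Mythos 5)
Your proof is correct. Both word computations check out: $\Delta_\xi^n = X^n\xi_n\sigma^{n+1}(\xi)$ contains $v=(d-1)^n0\sigma^{n+1}(\xi)$ exactly when $\xi_n=0$, and a vertex $(d-1)^n0i\sigma^{n+2}(\xi)$ of $\Lambda_\xi^n$ lies in $\Delta_\xi^{n+1}$ exactly when $i=\xi_{n+1}$, so $\Lambda_\xi^n\cap\Delta_\xi^{n+1}=\{v\}$; since $\varphi$ is injective it preserves intersections, and Lemma~\ref{lem:Delta_to_Delta} (applied with $\xi'=\xi$, $\eta'=\eta$) then forces $\varphi(v)=v'$ and $\varphi(\Delta_\xi^n)=\Delta_\eta^n$, which closes the chain. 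The edge case $n=0$ also works as you indicate.

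Your route is genuinely different from the paper's, although both hinge on Lemma~\ref{lem:Delta_to_Delta}. The paper argues by contradiction with a case split on the value of $\eta_n$: if $\xi_n=0$ but $\eta_n=d-1$, then $\Delta_\eta^n$ contains a vertex of some $\Lambda_{\eta'}^k$ with $k\ge n+1$ while $\Delta_\xi^n$ does not; if $\eta_n\neq 0,d-1$, it compares the cardinalities of the balls $\bigcup_{v\in\Lambda^{n-2}}\B_v(2^{n-1})$ on the two sides using Remark~\ref{rem:balls}, exploiting that $(d-1)^n0\sigma^{n+1}(\xi)$ is fixed by $B$ while $(d-1)^n\eta_n\sigma^{n+1}(\eta)$ is not. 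Your argument replaces both the case analysis and the ball counting by the single observation that the vertex $v$ is pinned down as the unique point of $\Lambda_\xi^n\cap\Delta_\xi^{n+1}$, so its image under $\varphi$ is forced, and its membership in $\Delta_\xi^n$ detects the digit. This is shorter and avoids any quantitative input from Remark~\ref{rem:balls}; the paper's version has the mild advantage of reusing the same ball-counting template that already appears in the proof of Lemma~\ref{lem:Delta_to_Delta}, but your proof is a clean and arguably preferable alternative.
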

\begin{proof}
	The case $n=0$ is a consequence of the first statement in Lemma~\ref{lem:Delta_to_Delta}. Let $n \ge 1$ and let $\xi_n = 0$.
	
	The second statement in the same Lemma implies that $\Delta_\xi^n = X^n0\sigma^{n+1}(\xi)$ is mapped to $\Delta_\eta^n = X^n\eta_n\sigma^{n+1}(\eta)$. If $\eta_n = d-1$, then the latter has a vertex in $\Lambda_{\eta'}^k$, for some $\eta' \in \Cof(\eta)$ and $k \ge n+1$, while the former does not have any vertex in $\Lambda_{\xi'}^k$ for any $\xi' \in \Cof(\xi)$ and $k \ge n+1$. By Lemma~\ref{lem:Delta_to_Delta}, this is a contradiction. Suppose $\eta_n \neq 0, d-1$. Because vertices $(d-1)^n0\sigma^{n+1}(\xi)$ and $(d-1)^n\eta_n\sigma^{n+1}(\eta)$ is and is not fixed by $B$, respectively, by Remark~\ref{rem:balls} we have
	\[
	\left| \bigcup_{v \in \Lambda_\xi^{n-2}} \B_v (2^{n-1}) \right| = \left| \bigcup_{v \in \Lambda_\xi^{n-2}} \B_v (2^{n-1} - 1) \right| + 2(d-1) =
	\]\[
	= \left| \Delta_\xi^n \right| + 2(d-1) = d^n + 2(d-1),
	\]

	\[
	\left| \bigcup_{v \in \Lambda_\eta^{n-2}} \B_v (2^{n-1}) \right| = \left| \bigcup_{v \in \Lambda_\eta^{n-2}} \B_v (2^{n-1} - 1) \right| + d - 1 =
	\]\[
	= \left| \Delta_\xi^n \right| + d - 1 = d^n + d - 1,
	\]
	which is also a contradiction. Hence $\eta_n = 0$.
\end{proof}

Lemma~\ref{lem:same_zeros} states that for two infinite rays to have isomorphic rooted, undirected, unlabeled Schreier graphs they must have  zeros in the same positions. This excludes any isomorphism between graphs of points with finitely and infinitely many zeros. Moreover, it allows us to write the image of $\xi = w_0 0 w_1 0 \dots$, with $w_k \in (X\setminus\{0\})^*$ as $\varphi(\xi) = \tilde{w}_0 0 \tilde{w}_1 0 \dots$, with $|w_k| = |\tilde{w}_k|$ for every $k \ge 0$.

Let $n \ge 0$ and $0 \le m \le n-1$. The sets
\[
Y_m = (X\setminus\{0\})^{n-m-1}(X\setminus \{0,d-1\})(d-1)^m, \quad Y_n = \{(d-1)^n\}
\]
define a partition of the set $(X\setminus\{0\})^n = \sqcup_{m=0}^n Y_m$.

\begin{definition}
	\label{def:compatibility}
	We call $\xi, \eta \in X^\N$ \emph{compatible} (and we denote this by $\xi \sim \eta$) if they are of the form $\xi = w_0 0 w_1 0 \dots$, $\eta = \tilde{w}_0 0 \tilde{w}_1 0 \dots$, with $w_k, \tilde{w}_k \in (X\setminus\{0\})^*\sqcup (X\setminus\{0\})^\N$, $|w_k| = |\tilde{w}_k|$, and $w_k \in Y_{m_k}$ iff $\tilde{w}_k \in Y_{m_k}$, for every $k \ge 0$ such that $|w_k| < \infty$. Notice that compatibility is an equivalence relation on $X^\N$.
\end{definition}

\begin{proposition}
	\label{prop:isomorphism_necessary}
	Let $\xi, \eta \in X^\N$, and let $\varphi: (\Gamma_\xi, \xi) \to (\Gamma_{\eta}, \eta)$ be an isomorphism of rooted unlabeled graphs. Then, $\xi \sim \eta$.
\end{proposition}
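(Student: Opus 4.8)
The plan is to upgrade the two necessary conditions already isolated---that $\varphi$ preserves the copies $\Delta^n$ together with their levels (Lemma~\ref{lem:Delta_to_Delta}), and that $\xi$ and $\eta$ carry their zeros in the same positions (Lemma~\ref{lem:same_zeros})---by adding one more invariant that sees the trailing runs of $(d-1)$'s. By Lemma~\ref{lem:same_zeros} I may write $\xi = w_0 0 w_1 0 \cdots$ and $\eta = \tilde w_0 0 \tilde w_1 0 \cdots$ with $w_k, \tilde w_k \in (X\setminus\{0\})^* \sqcup (X\setminus\{0\})^\N$ and $|w_k| = |\tilde w_k|$. Comparing with Definition~\ref{def:compatibility}, the only thing left to show is that for each finite block $w_k \in Y_m \Leftrightarrow \tilde w_k \in Y_m$, that is, that $w_k$ and $\tilde w_k$ end in the same power $(d-1)^{r_k}$.

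The invariant I would use is the function $s_n = s_n(\xi)$ recording whether $\sigma^n(\xi)$ is fixed by $B$. The key observation is that $s_n$ is detected by the copy $\Delta_\xi^n$: by the definition of $\Delta_\xi^n$ and the discussion following Proposition~\ref{prop:copy}, the only vertices of $\Delta_\xi^n$ joined by an edge to the rest of $\Gamma_\xi$ are $(d-1)^{n-1}0\sigma^n(\xi)$, which is always such a vertex (because $\omega_{n-1}$ is onto, so some $b\in B$ sends it outside $\Delta_\xi^n$), and $(d-1)^n\sigma^n(\xi)$, which is such a vertex precisely when $\sigma^n(\xi)$ is not fixed by $B$. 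These two vertices are distinct, since their last letters $0$ and $d-1$ differ. Hence $\Delta_\xi^n$ has exactly two vertices with an edge leaving it when $s_n = 0$, and exactly one when $s_n = 1$.

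Next I would show $\varphi$ preserves this count. Applying Lemma~\ref{lem:Delta_to_Delta} with $\xi' = \xi$ and $\eta' = \varphi(\xi) = \eta$ gives $\varphi(\Delta_\xi^n) = \Delta_\eta^n$ for every $n$. Since $\varphi$ is an isomorphism $\Gamma_\xi \to \Gamma_\eta$ mapping $\Delta_\xi^n$ onto $\Delta_\eta^n$, it maps the complement onto the complement, and therefore restricts to a bijection between the vertices of $\Delta_\xi^n$ having an edge leaving $\Delta_\xi^n$ and those of $\Delta_\eta^n$. Comparing cardinalities with the previous paragraph yields $s_n(\xi) = s_n(\eta)$ for all $n \ge 1$. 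Thus $\varphi$ forces $\xi$ and $\eta$ to agree both on their zeros and on the whole sequence $(s_n)$.

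It then remains to recover the trailing exponents from $(s_n)$ and the common zero positions, the only genuinely combinatorial step. Writing $j_k$ for the position of the zero ending the $k$-th finite block, I would verify that $\sigma^n(\xi)$ is not fixed by $B$ for $j_k - r_k \le n \le j_k$ (the first non-$(d-1)$ letter of each of these tails is the $0$ at position $j_k$), whereas $\sigma^{j_k - r_k - 1}(\xi)$ is fixed by $B$ when $r_k < |w_k|$ (its first letter lies in $X \setminus \{0, d-1\}$). Consequently the length $\ell_k$ of the run of indices $n$ with $s_n = 0$ ending at $j_k - 1$ equals $r_k$ when $r_k < |w_k|$ and exceeds $|w_k|$ when $r_k = |w_k|$; in both cases $r_k = \min(\ell_k, |w_k|)$. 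Since $\xi$ and $\eta$ share the $j_k$ (hence the $|w_k|$) and the sequence $(s_n)$, they share every $r_k$, which is exactly $\xi \sim \eta$. The hard part will be this last translation: one must treat carefully the extremal case $w_k = (d-1)^{|w_k|}$ (membership in $Y_{|w_k|}$), where the zero-run of $(s_n)$ spills past the preceding zero and has to be truncated at the block length; the remainder is a direct application of the earlier lemmas.
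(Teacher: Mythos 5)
Your argument is correct, and it takes a genuinely different route from the paper's. The paper also starts from Lemmas~\ref{lem:Delta_to_Delta} and~\ref{lem:same_zeros}, but then argues by contradiction: assuming $w_k\in Y_m$ and $\tilde w_k\in Y_{m'}$ with $m<m'$, it exhibits (in two cases, $m'=n$ and $m'<n$) a pair of copies $\Delta_1\subset\Gamma_\xi$ and $\Delta_2\subset\Gamma_\eta$ that must correspond under $\varphi$, and observes that only $\Delta_2$ contains a vertex lying in some $\Lambda_{\eta'}^M$ with $M$ at least the level of the copy, contradicting Lemma~\ref{lem:Delta_to_Delta}. You instead extract a single complete invariant --- the indicator sequence $s_n$ of whether $\sigma^n(\xi)$ is fixed by $B$, detected by whether $\Delta_\xi^n$ has one or two vertices incident to an edge leaving it --- and then reconstruct every exponent $r_k$ from the zero positions together with $(s_n)$ via $r_k=\min(\ell_k,|w_k|)$. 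Both proofs ultimately exploit the same geometric fact (the trailing $(d-1)$-runs are visible in how the copies $\Delta^n$ attach to the rest of the graph), but your version is more systematic: it shows directly that the data preserved by any rooted isomorphism determines the compatibility class, rather than ruling out each mismatch separately, and your counting of boundary vertices of $\Delta_\xi^n$ (always $(d-1)^{n-1}0\sigma^n(\xi)$, and $(d-1)^n\sigma^n(\xi)$ exactly when $\sigma^n(\xi)$ is not fixed by $B$) is a correct reading of the remark following Proposition~\ref{prop:copy}. The truncation analysis in your last paragraph, including the case $w_k=(d-1)^{|w_k|}$ where the run spills past the preceding zero, is also right.

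One small patch is needed: your boundary-vertex count only yields $s_n(\xi)=s_n(\eta)$ for $n\ge1$, while recovering $r_0$ may require $s_0$ (for instance, to tell $w_0=(d-1)$ from $w_0=i$ with $i\neq 0,d-1$ when $j_0=1$). This is supplied by the observation already made at the start of the proof of Lemma~\ref{lem:Delta_to_Delta}: a vertex fixed by $B$ carries $d^m-1$ loops, whereas one that is not carries only $d^{m-1}-1$, so the roots $\xi$ and $\eta=\varphi(\xi)$ are simultaneously fixed or not fixed by $B$. With that sentence added, your proof is complete.
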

\begin{proof}
	By Lemma~\ref{lem:same_zeros}, we can assume that $\xi = w_0 0 w_1 0 \dots$ and $\eta = \tilde{w}_0 0 \tilde{w}_1 0 \dots$ with $w_k, \tilde{w}_k \in (X\setminus\{0\})^*\sqcup (X\setminus\{0\})^\N$ and $|w_k| = |\tilde{w}_k|$ for every $k \ge 0$ such that $|w_k| < \infty$. Let us assume, for a contradiction, that there exists some $k \ge 0$ such that, if $n=|w_k|$, $w_k \in Y_m$ and $\tilde{w}_k \in Y_{m'}$, with $m < m'$. Let also $N_r = \sum_{s=0}^{r-1} (|w_s| + 1)$, so that $\sigma^{N_r}(\xi) = w_k0w_{k+1}0\dots$ and $\sigma^{N_r}(\eta) = \tilde{w}_k0\tilde{w}_{k+1}0\dots$.
	
	Assume first $m < m' = n$, then $w_k = wi(d-1)^m$ and $\tilde{w}_k = (d-1)^n$, for some $i\in X\setminus\{0,d-1\}$ and $w \in (X\setminus\{0\})^{n - m - 1}$. We exclude the case $k = 0$ because then $\xi$ and $\eta$ would respectively be fixed and not fixed by $B$, which is absurd. So $N_k > 0$ and let us define the following copies of $\Gamma_{N_k}$, which respectively contain $\xi$ and $\eta$:
	\[
	\Delta_1 = X^{N_k}wi(d-1)^m0\sigma^{N_{k+1}}(\xi),
	\qquad
	\Delta_2 = X^{N_k}(d-1)^n0\sigma^{N_{k+1}}(\eta).
	\]
	
	By Lemma~\ref{lem:Delta_to_Delta}, we have $\varphi(\Delta_1) = \Delta_2$. The latter contains the vertex $(d~-~1)^{N_k + n}0\sigma{N_{n+1}}(\eta)$ which belongs to $\Lambda_{\eta'}^{N_k + n}$, for some $\eta' \in \Cof(\eta)$. However, the former does not contain any vertex which belongs to $\Lambda_{\xi'}^M$ for any $\xi' \in \Cof(\xi)$ and $M \ge N_k$. Again by Lemma~\ref{lem:Delta_to_Delta}, this is a contradiction.

	If $m < m' < n$, then $m < n - 1$ or equivalently $n - m - 1 > 0$. In this case, we consider the following copies of $\Gamma_{N_k + n - m - 1}$, again containing $\xi$ and $\eta$, respectively:
	\[
	\Delta_1 = X^{N_k + n - m - 1}i(d-1)^m0\sigma^{N_{k+1}}(\xi),
	\]\[
	\Delta_2 = X^{N_k + n - m - 1}(d-1)^{m+1}0\sigma^{N_{k+1}}(\eta).
	\]
	
	Again Lemma~\ref{lem:Delta_to_Delta} implies $\varphi(\Delta_1) = \Delta_2$. Similarly to the previous case, the latter contains the vertex $(d-1)^{N_k + n}0\sigma^{N_{n+1}}(\eta)$, which belongs to $\Lambda_{\eta'}^{N_k + n}$ for some $\eta' \in \Cof(\eta)$. In the former, there is not any vertex which belongs to $\Lambda_{\xi'}^M$, for any $\xi' \in \Cof(\xi)$ and $M \ge N_k + n - m - 1$. As $n - m - 1 > 0$, by Lemma~\ref{lem:Delta_to_Delta} this is a contradiction.
\end{proof}

We have shown that compatibility is a necessary condition to find an isomorphism of rooted, unlabeled graphs. Our next goal will be to prove the converse. Namely, that for any two compatible points there exists an isomorphism mapping their graphs to each other. To this purpose, let use introduce the following notation.

\begin{definition}
	\label{def:R}
	Let $\xi, \eta \in X^\N$. We define $R = R_{\xi, \eta} = \min \{s \mid \sigma^s(\xi) = \sigma^s(\eta)\}$. Notice that $R < \infty$ iff $\eta \in \Cof(\xi)$. Also note that $R = 0$ iff $\eta = \xi$ and that otherwise, by minimality, $\eta_{R - 1} \neq \xi_{R - 1}$.
\end{definition}

\begin{definition}
	Let $\xi, \eta \in X^\N$. We define $\tau_n = \tau_{n, \xi, \eta} = (\xi_n \: \eta_n) \in Sym(X)$. Let us also define $\varphi = \varphi_{\xi, \eta}: (\Gamma_\xi, \xi) \to (\Gamma_{\eta}, \eta)$ as
	\[
	\varphi(\xi') = \left\{
	\begin{array}{ll}
	\eta & \quad \text{if } \xi' = \xi\\
	\xi_0'\dots \xi_{R - 2}'\tau_{R - 1}(\xi_{R - 1}')\sigma^R(\eta) & \quad \text{if } \xi' \neq \xi, \quad \text{with } R = R_{\xi, \xi'}
	\end{array}
	\right..
	\]
\end{definition}

\begin{remark}
\label{rem:isomorphisms}
Suppose $\xi \sim \xi'$, and let $\eta' = \varphi(\xi')$.
\begin{enumerate}
	\item $R_{\eta, \eta'} = R_{\xi, \xi'}$. It is clear that $R_{\eta, \eta'} \le R_{\xi, \xi'}$, and if it was strictly smaller, then, setting $n = R_{\xi, \xi'} - 1$, we would have $\eta_n = \tau_n(\xi_n')$, and so $\xi_n = \xi_n'$, which is a contradiction with the minimality of $R_{\xi, \xi'}$.

	\item $\psi = \varphi_{\eta, \xi}$ is the inverse of $\varphi = \varphi_{\xi, \eta}$. Indeed, $\psi(\varphi(\xi)) = \xi$, and, for $\xi' \neq \xi$,
	\[
	\psi(\varphi(\xi')) = \psi\left(\xi_0'\dots \xi_{R - 2}' \tau_{R-1}(\xi_{R - 1}')\sigma^R(\eta)\right) =
	\]\[
	= \xi_0'\dots \xi_{R - 2}' \tau_{R-1}^2(\xi_{R - 1}')\sigma^R(\xi) = \xi_0'\dots \xi_{R - 2}'\xi_{R - 1}'\sigma^R(\xi) = \xi'.
	\]
	Hence, $\varphi$ is a bijection between the vertex sets of $\Gamma_\xi$ and $\Gamma_\eta$.
\end{enumerate}
\end{remark}

\begin{proposition}
	\label{prop:isomorphism_sufficient}
	Let $\xi, \eta \in X^\N$ such that $\xi \sim \eta$. Then $\varphi_{\xi, \eta}: (\Gamma_\xi, \xi) \to (\Gamma_\eta, \eta)$ is an isomorphism of rooted, unlabeled graphs.
\end{proposition}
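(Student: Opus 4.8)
The plan is to verify that the bijection $\varphi = \varphi_{\xi,\eta}$ introduced above is a graph isomorphism, i.e. that it carries edges to edges and loops to loops. By Remark~\ref{rem:isomorphisms}(2) the inverse of $\varphi$ is $\varphi_{\eta,\xi}$, a map of exactly the same shape; hence it suffices to prove that $\varphi$ sends each generator-edge issuing from a vertex $\xi'$ to an edge issuing from $\eta' := \varphi(\xi')$, and that $\xi'$ is fixed by $B$ if and only if $\eta'$ is. Applying this to both $\varphi$ and $\varphi^{-1}$ yields the two directions needed for an isomorphism. Throughout I write $R = R_{\xi,\xi'} = R_{\eta,\eta'}$ (equal by Remark~\ref{rem:isomorphisms}(1)) and use the explicit description of $\eta'$: it agrees with $\xi'$ on coordinates $0,\dots,R-2$, satisfies $\eta'_{R-1} = \tau_{R-1}(\xi'_{R-1})$, and has $\sigma^R(\eta') = \sigma^R(\eta)$ while $\sigma^R(\xi') = \sigma^R(\xi)$. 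The edges of $\Gamma_\xi$ are those listed in Proposition~\ref{prop:description_Gamma_xi}; note that $A$-edges only alter coordinate $0$ whereas $B$-edges only alter a coordinate $\ge 1$, so the two families are disjoint and can be treated separately, and that all $B$-edges between a fixed pair of moved vertices occur with the same multiplicity $d^{m-1} = |\Ker\omega_q|$ while $A$-edges are simple. Since these multiplicities are uniform, it is enough to match adjacency and edge-type.

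The conceptual tool that makes the compatibility hypothesis work is the following observation, immediate from Definition~\ref{def:compatibility}: for every $s \ge 0$ the tail $\sigma^s(\xi)$ is moved by $B$ if and only if $\sigma^s(\eta)$ is. Indeed, $\sigma^s(\xi)$ is moved exactly when the first symbol different from $d-1$ in $\xi_s\xi_{s+1}\cdots$ is a $0$, which happens precisely when $s$ falls inside the trailing $(d-1)$-run of the block of $\xi$ containing it (that block then closing with a $0$); since $\xi$ and $\eta$ have their zeros in the same positions and their corresponding blocks share the same trailing $(d-1)$-count, the two conditions coincide. I would first use this to show that $\xi'$ is fixed by $B$ iff $\eta'$ is, thereby matching loops. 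Writing $q$ for the position of the first non-$(d-1)$ symbol of $\xi'$, the cases $q \le R-2$ are transparent because $\xi'$ and $\eta'$ agree there. The delicate case is $q = R-1$, where the transposition $\tau_{R-1}$ can turn a non-$(d-1)$ symbol into $d-1$ and thus push the first non-$(d-1)$ coordinate into the tail. Here one checks, using $\xi_{R-1} \neq \xi'_{R-1}$ together with the equality of trailing $(d-1)$-counts, that $R-1$ cannot be the start of the block's trailing run, which by the observation above forces $\sigma^R(\xi)$ and $\sigma^R(\eta)$ to share the same fixed/moved status; this is exactly what is needed for $\eta'$ to inherit the status of $\xi'$.

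With loops under control I would then treat the edges. For $A$-edges the image $\varphi(a^j\xi')$ always differs from $\eta'$ in coordinate $0$ only: when $R \ge 2$ it is literally $a^j\eta'$, and when $R=1$ the transposition $\tau_0$ may relabel $j$ but the two images still differ solely in coordinate $0$, so adjacency through some $A$-edge is preserved (the sub-case $a^j\xi' = \xi$ being absorbed by $\varphi(\xi)=\eta$). For $B$-edges issuing from a moved vertex $\xi'$ acting on coordinate $q+1$ (so $\xi'$ has prefix $(d-1)^q0$), the loop analysis already shows that $\eta'$ is moved at the corresponding coordinate. I would split according to whether $q+1$ lies strictly below the splice coordinate $R-1$, equals it, or lies in the tail. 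In the first case the action happens where $\xi'$ and $\eta'$ coincide and the target maps directly to a $B$-neighbour of $\eta'$. In the remaining cases one recomputes $R_{\xi,\xi''}$ for the target $\xi''$; the uniform point is that, since $\omega_q(b)=a^j$ with $j\neq0$ and the relevant $\tau$ is a transposition of the two distinguished symbols, $\varphi(\xi'')$ differs from $\eta'$ in exactly coordinate $q+1$ and takes a value different from $\eta'_{q+1}$, hence is a genuine non-loop $B$-neighbour of $\eta'$; the degenerate possibility $\xi''=\xi$ is again handled by $\varphi(\xi)=\eta$.

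The main obstacle is precisely the bookkeeping around the splice coordinate $R-1$: both the loop-matching and the $B$-edge analysis can be derailed when a generator acts at or adjacent to $R-1$, because there the transposition $\tau_{R-1}$ and the replacement of the $\xi$-tail by the $\eta$-tail interfere with the $(d-1)$-prefix pattern that governs the $B$-action. The whole purpose of compatibility — equal zero positions and equal trailing $(d-1)$-counts of the intervening blocks — is to guarantee, through the observation of the second paragraph, that this interference never alters whether a vertex is moved nor the coordinate on which $B$ acts. Once that is secured, the remaining verifications reduce to the routine coordinate computations indicated above, and together with the uniform edge multiplicities they upgrade the adjacency-preserving bijection $\varphi$ to an isomorphism of rooted, unlabeled graphs.
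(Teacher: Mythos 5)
Your proposal is correct and follows essentially the same route as the paper: both verify directly that the explicitly defined bijection $\varphi_{\xi,\eta}$ preserves adjacency by a case analysis on where the acted-upon coordinate sits relative to the splice position $R-1$, with compatibility entering exactly to guarantee that the relevant transposition fixes $0$ and $d-1$ and that the fixed-by-$B$ status of vertices is preserved. The only cosmetic difference is that the paper packages the verification as $\varphi(\Delta^1_{\xi'})=\Delta^1_{\eta'}$ and $\varphi(\Lambda^n_{\xi'})=\Lambda^n_{\eta'}$ with a four-way case split on $R$, whereas you check generator-edges one at a time and isolate the (correct and implicitly used) observation that compatibility preserves the moved-by-$B$ status of all tails.
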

\begin{proof}
	Let $\varphi = \varphi_{\xi, \eta}$. We already proved in Remark~\ref{rem:isomorphisms} that $\varphi$ is a bijection between the sets of vertices, so we only have to prove that edges are preserved. Every $A$ or $B$-edge belongs to $\Delta_{\xi'}^1$ or $\Lambda_{\xi'}^n$, respectively, for some $\xi' \in \Cof(\xi)$ and $n \ge 0$. Let then $\xi' \in \Cof(\xi)$, $n \ge 0$, set $\eta' = \varphi(\xi')$ and let us prove $\varphi(\Delta_{\xi'}^1) = \Delta_{\eta'}^1$ and $\varphi(\Lambda_{\xi'}^n) = \Lambda_{\eta'}^n$.
	
	For the first claim, let us consider two cases. If $\xi \in \Delta_{\xi'}^1$, then we have
	\[
	\varphi(\Delta_{\xi'}^1) = \varphi(\Delta_\xi^1) = \varphi\left(\{\xi\} \sqcup \{  j\sigma(\xi) \mid j \neq \xi_0\}\right) =
	\]\[
	= \{\eta\} \sqcup \{\tau_0(j)\sigma(\eta) \mid j \neq \xi_0\} =\{\eta\} \sqcup \{j\sigma(\eta) \mid j \neq \eta_0\} = \Delta_{\eta}^1.
	\]
	Since $R_{\xi, \xi'} \le 1$, by Remark~\ref{rem:isomorphisms} $R_{\eta, \eta'} \le 1$, so $\Delta_{\eta}^1 = \Delta_{\eta'}^1$. 

	Assume now $\xi \not\in \Delta_{\xi'}^1$, so $R = R_{\xi, \xi'} \ge 2$. Also, $R_{\xi, v} = R$ for all $v \in \Delta_{\xi'}^1$. Then
	\[
	\varphi(\Delta_{\xi'}^1) = \varphi(\{j\xi_1'\dots\xi_{R-1}'\sigma^R(\xi) \mid j \in X\}) = \]\[
	= \{j\xi_1'\dots\xi_{R-2}'\tau_{R-1}(\xi_{R-1}')\sigma^R(\eta) \mid j \in X \} = \Delta_{\eta'}^1.
	\]
	
	To prove the second claim, set $R = \min_{v \in \Lambda_{\xi'}^n} R_{\xi, v}$. In this case, $R$ cannot be $n+2$, since $v_{n+1}$ takes all values in $X$ when $v$ runs through $\Lambda_{\xi'}^n$. We will consider four different cases, depending on the value of $R$.

	Suppose first $R = 0$. In this case, $\xi \in \Lambda_{\xi'}^n$. Moreover, $R_{\xi, v} = n + 2$ for all $v \in \Lambda_{\xi'}^n$ except for $v = \xi$. Therefore,
	\[
	\varphi(\Lambda_{\xi'}^n) = \varphi(\{\xi\} \sqcup \{(d-1)^n0j\sigma^{n+2}(\xi) \mid j \neq \xi_{n+1} \}) =
	\]\[
	= \{\eta\} \sqcup \{(d-1)^n0\tau_{n+1}(j)\sigma^{n+2}(\eta) \mid j \neq \xi_{n+1} \} =
	\]\[
	= \{\eta\} \sqcup \{(d-1)^n0j\sigma^{n+2}(\eta) \mid j \neq \eta_{n+1} \} =
	\]\[
	= \{(d-1)^n0j\sigma^{n+2}(\eta) \mid j \in X\} = \Lambda_{\eta}^n.
	\]
	And $\Lambda_{\eta}^n = \Lambda_{\eta'}^n$, because $R_{\eta, \eta'} = R_{\xi, \xi'} \le n+2$, so $\sigma^{n+2}(\eta) = \sigma^{n+2}(\eta')$.

	Suppose now $1 \le R \le n$. For the vertex $v = (d-1)^n0\xi_{n+1}\sigma^{n+2}(\xi') \in \Lambda_{\xi'}^n$ we must have $R_{\xi, v} = R \le n$. This implies $\xi = \xi_0 \dots \xi_{R-1} (d-1)^{n-R}0\xi_{n+1}\sigma^{n+2}(\xi)$ and $\sigma^{n+2}(\xi) = \sigma^{n+2}(\xi')$. Additionally, since $\xi \sim \eta$, we can write $\eta = \eta_0 \dots \eta_{R-1}(d~-~1)^{n-R}0 \eta_{n+1}\sigma^{n+2}(\eta)$. For every $w \in \Lambda_{\xi'}^n$, $w \neq v$, we have $R_{\xi, w} = n + 2$. Then,
	\[
	\varphi(\Lambda_{\xi'}^n) = \varphi\left(\{v\} \sqcup \{(d-1)^n0j\sigma^{n+2}(\xi) \mid j \neq \xi_{n+1}\}\right) =
	\]\[
	= \{(d-1)^{R-1}\tau_{R-1}(d-1)(d-1)^{n-R}0\eta_{n+1}\sigma^{n+2}(\eta)\} \sqcup
	\]\[
	\sqcup \{(d-1)^n0\tau_{n+1}(j)\sigma^{n+2}(\eta) \mid j \neq \xi_{n+1}\} =
	\]\[
	= \{(d-1)^n0\eta_{n+1}\sigma^{n+2}(\eta)\} \sqcup \{(d-1)^n0j\sigma^{n+2}(\xi) \mid j \neq \eta_{n+1}\} = \Lambda_{\eta}^n.
	\]
	where we used $\tau_{R - 1}(d-1) = d-1$ because $\xi \sim \eta$. In addition, again $R_{\eta, \eta'} = R_{\xi, \xi'} \le n + 2$, so $\Lambda_{\eta}^n = \Lambda_{\eta'}^n$.

	The third case is $R = n + 1$. The vertex $v = (d-1)^n0\xi_{n+1}\sigma^{n+2}(\xi') \in \Lambda_{\xi'}^n$ satisfies $R_{\xi, v} = R = n+1$. This implies $\sigma^{n+2}(\xi) = \sigma^{n+2}(\xi')$. For any other $w \in \Lambda_{\xi'}^n$, we must have $R_{\xi, w} = n + 2$. Therefore,
	\[
	\varphi(\Lambda_{\xi'}^n) = \varphi(\{v\} \sqcup \{(d-1)^n0j\sigma^{n+2}(\xi) \mid j \neq \xi_{n+1}\}) =
	\]\[
	= \{(d-1)^n\tau_n(0)\eta_{n+1}\sigma^{n+2}(\eta)\} \sqcup \{(d-1)^n0\tau_{n+1}(j)\sigma^{n+2}(\eta) \mid j \neq \xi_{n+1}\} =
	\]\[
	= \{(d-1)^n0\eta_{n+1}\sigma^{n+2}(\eta)\} \sqcup \{(d-1)^n0j\sigma^{n+2}(\eta) \mid j \neq \eta_{n+1}\} = \Lambda_{\eta}^n,
	\]
	where we used $\tau_n(0) = 0$ after Lemma~\ref{lem:same_zeros} and provided that $\xi \sim \eta$. Once more, $R_{\eta, \eta'} = R_{\xi, \xi'} \le n + 2$, so $\Lambda_{\eta}^n = \Lambda_{\eta'}^n$.

	Finally, assume $R \ge n + 3$. We can write $\Lambda_{\xi'}^n = \{(d-1)^n0j\sigma^{n+2}(\xi') \mid j \in X\} = \{(d-1)^n0j\xi_{n+2}'\dots\xi_{R-1}'\sigma^R(\xi') \mid j \in X\}$. In this case, $R_{\xi, v} = R$ for every $v \in \Lambda_{\xi'}^n$, and also $R_{\xi, \xi'} = R$, which implies $\sigma^R(\xi) = \sigma^R(\xi')$. Then,
	\[
	\varphi(\Lambda_{\xi'}^n) = \varphi(\{(d-1)^n0j\xi_{n+2}'\dots\xi_{R-1}'\sigma^R(\xi) \mid j \in X\}) =
	\]\[
	= \{(d-1)^n0j\xi_{n+2}'\dots \xi_{R-2}'\tau_{R-1}(\xi_{R-1}')\sigma^R(\eta) \mid j \in X\} =
	\]\[ 
	= \Lambda_{\xi_0'\dots\xi_{R-2}'\tau_{R-1}(\xi_{R-1}')\sigma^R(\eta)}^n,
	\]
	and notice that $\eta' = \xi_0'\dots\xi_{R-2}'\tau_{R-1}(\xi_{R-1}')\sigma^R(\eta)$, so
	\[
	\Lambda_{\xi_0'\dots\xi_{R-2}'\tau_{R-1}(\xi_{R-1}')\sigma^R(\eta)}^n = \Lambda_{\eta'}^n.
	\]
\end{proof}

\begin{theorem}
	\label{thm:rooted_isomorphism}
	Let $\xi, \eta \in X^\N$. Then, $(\Gamma_\xi, \xi)$ and $(\Gamma_\eta, \eta)$ are isomorphic as rooted, undirected, unlabeled graphs iff $\xi \sim \eta$.
\end{theorem}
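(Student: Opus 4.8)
The plan is to read off the theorem as the conjunction of the two propositions immediately preceding it, which already establish the two implications separately. For the forward direction, I would suppose that $(\Gamma_\xi, \xi)$ and $(\Gamma_\eta, \eta)$ are isomorphic as rooted, undirected, unlabeled graphs; then Proposition~\ref{prop:isomorphism_necessary} gives at once $\xi \sim \eta$. For the reverse direction, I would assume $\xi \sim \eta$; then Proposition~\ref{prop:isomorphism_sufficient} asserts that the explicitly defined map $\varphi_{\xi,\eta}$ is an isomorphism of rooted, unlabeled graphs, which is exactly the witness that $(\Gamma_\xi, \xi)$ and $(\Gamma_\eta, \eta)$ are isomorphic. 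Concatenating the two statements yields the desired equivalence.

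Thus the theorem carries essentially no new content beyond bookkeeping, since all the substantive work has been front-loaded into the supporting results. On the necessity side, the engine is the pair Lemma~\ref{lem:Delta_to_Delta} and Lemma~\ref{lem:same_zeros}, which force any isomorphism to carry each copy $\Delta^n$ of $\Gamma_n$ and each block $\Lambda^n$ to a subgraph of the same type and to preserve the positions of the zeros; Proposition~\ref{prop:isomorphism_necessary} then upgrades this into the full suffix-matching condition of Definition~\ref{def:compatibility}. On the sufficiency side, Proposition~\ref{prop:isomorphism_sufficient} verifies, through the case analysis on $R = R_{\xi,\xi'}$, that $\varphi_{\xi,\eta}$ maps every $\Delta^1_{\xi'}$ onto $\Delta^1_{\eta'}$ and every $\Lambda^n_{\xi'}$ onto $\Lambda^n_{\eta'}$, so that all $A$- and $B$-edges are preserved.

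I would also record explicitly that this statement lives in the $d \ge 3$ regime of the present section (the case $d = 2$ being governed instead by Proposition~\ref{prop:isomorphisms_d2}, where compatibility degenerates and the characterization is genuinely different), since the counting steps in the proofs of Lemma~\ref{lem:Delta_to_Delta} and Lemma~\ref{lem:same_zeros} rely on $d \ge 3$. There is no real obstacle in the theorem itself; the only point requiring a moment's care is to confirm that the necessary condition extracted in Proposition~\ref{prop:isomorphism_necessary} is \emph{exactly} the compatibility relation $\sim$ taken as the hypothesis of Proposition~\ref{prop:isomorphism_sufficient}, so that the two half-arguments dovetail without leaving a gap between them.
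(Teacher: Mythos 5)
Your proposal matches the paper's proof exactly: the theorem is proved there by citing Proposition~\ref{prop:isomorphism_necessary} for necessity and Proposition~\ref{prop:isomorphism_sufficient} for sufficiency, with all the substantive work residing in those results and their supporting lemmas. Your added remark that the statement implicitly lives in the $d\ge 3$ regime is a fair observation consistent with the paper's earlier exclusion of the $d=2$ cases.
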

\begin{proof}
	One implication is Proposition~\ref{prop:isomorphism_necessary} and the other is Proposition~\ref{prop:isomorphism_sufficient}.
\end{proof}

We can now adapt Theorem~\ref{thm:rooted_isomorphism} to unrooted graphs.

\begin{corollary}
	\label{cor:unrooted_isomorphism}
	Let $\xi, \eta \in X^\N$. Then, $\Gamma_\xi$ and $\Gamma_\eta$ are isomorphic as unrooted, undirected, unlabeled graphs iff there exists $\eta' \in \Cof(\eta)$ such that $\xi \sim \eta'$.
\end{corollary}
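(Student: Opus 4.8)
The plan is to reduce the unrooted isomorphism question to the rooted one already settled in Theorem~\ref{thm:rooted_isomorphism}. The key observation is that an unrooted isomorphism $\Gamma_\xi \to \Gamma_\eta$ is the same data as a rooted isomorphism $(\Gamma_\xi, \xi) \to (\Gamma_\eta, \eta')$ for some choice of root $\eta'$ in the target, and the admissible roots $\eta'$ are exactly the vertices of $\Gamma_\eta$, which by Proposition~\ref{prop:description_Gamma_xi} form the cofinality class $\Cof(\eta)$. So I would phrase the corollary as: $\Gamma_\xi \cong \Gamma_\eta$ (unrooted) iff $(\Gamma_\xi, \xi) \cong (\Gamma_\eta, \eta')$ (rooted) for some $\eta' \in \Cof(\eta)$, and then invoke Theorem~\ref{thm:rooted_isomorphism} to translate the right-hand side into $\xi \sim \eta'$.

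For the forward direction, suppose $\psi : \Gamma_\xi \to \Gamma_\eta$ is an isomorphism of unrooted, undirected, unlabeled graphs. Set $\eta' = \psi(\xi)$, which is a vertex of $\Gamma_\eta$, hence $\eta' \in V(\Gamma_\eta) = \Cof(\eta)$ by Proposition~\ref{prop:description_Gamma_xi}. Then $\psi$ is by construction a rooted isomorphism $(\Gamma_\xi, \xi) \to (\Gamma_\eta, \eta')$. Here I would use that $\Gamma_\eta$ and $\Gamma_{\eta'}$ are literally the same graph whenever $\eta' \in \Cof(\eta)$, since the vertex set and edges in Proposition~\ref{prop:description_Gamma_xi} depend only on the cofinality class; thus $(\Gamma_\eta, \eta') = (\Gamma_{\eta'}, \eta')$ and Theorem~\ref{thm:rooted_isomorphism} applies to give $\xi \sim \eta'$. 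For the converse, if $\xi \sim \eta'$ for some $\eta' \in \Cof(\eta)$, Proposition~\ref{prop:isomorphism_sufficient} provides a rooted isomorphism $\varphi_{\xi, \eta'} : (\Gamma_\xi, \xi) \to (\Gamma_{\eta'}, \eta') = (\Gamma_\eta, \eta')$, and forgetting the root yields an unrooted isomorphism $\Gamma_\xi \cong \Gamma_\eta$.

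The only genuine subtlety — and the step I expect to require the most care — is the identification $\Gamma_\eta = \Gamma_{\eta'}$ for cofinal $\eta, \eta'$. This is where the argument crucially uses that the Schreier graph is an invariant of the orbit rather than of the individual boundary point: two cofinal points lie in the same orbit (the Proposition preceding~\ref{prop:description_Gamma_xi} shows $G \cdot \xi = \Cof(\xi)$), so they index the very same graph, and only the root changes. I would make this explicit to avoid any confusion between "the graph associated to $\eta$" and "the graph associated to $\eta'$". Everything else is bookkeeping: the forward direction merely records where the root goes, and the backward direction is a direct application of the explicit isomorphism $\varphi_{\xi,\eta'}$ already constructed. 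No new combinatorial estimates are needed beyond those in the rooted case.
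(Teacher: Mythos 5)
Your proposal is correct and follows essentially the same route as the paper: take $\eta'=\varphi(\xi)\in\Cof(\eta)$ to reduce to the rooted case and apply Theorem~\ref{thm:rooted_isomorphism} in both directions. The only difference is that you make explicit the identification $\Gamma_\eta=\Gamma_{\eta'}$ for cofinal points, which the paper uses implicitly; this is a harmless (and arguably welcome) clarification, not a change of argument.
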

\begin{proof}
	Let $\varphi: \Gamma_\xi \to \Gamma_\eta$ be an isomorphism of unrooted graphs. Then $\eta' = \varphi(\xi) \in \Cof(\eta)$, and $\tilde{\varphi}: (\Gamma_\xi, \xi) \to (\Gamma_\eta, \eta')$ is an isomorphism of rooted graphs. By Theorem~\ref{thm:rooted_isomorphism}, $\xi \sim \eta'$.

	On the other hand, if there exists $\eta' \in \Cof(\eta)$ such that $\xi \sim \eta'$, again by Theorem~\ref{thm:rooted_isomorphism} $(\Gamma_\xi, \xi)$ and $(\Gamma_\eta, \eta')$ are isomorphic as rooted graphs, and so as unrooted graphs as well.
\end{proof}

\begin{theorem}
	\label{thm:measure_isomorphism_classes}
	All rooted, unlabeled isomorphism classes have measure zero in $X^\N$.
\end{theorem}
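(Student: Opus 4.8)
The plan is to combine the classification of isomorphism classes in Theorem~\ref{thm:rooted_isomorphism} with the two measure-zero mechanisms already at hand: the infinite-product estimate for events on disjoint coordinates, and Lemma~\ref{lem:measures}. Throughout I work in the regime $d \ge 3$ of this part of the section, so that Theorem~\ref{thm:rooted_isomorphism} applies and the rooted, unlabeled isomorphism class of a point $\xi \in X^\N$ is exactly its compatibility class $[\xi]_\sim$. The crucial observation is that compatibility pins down the \emph{positions} of the zeros, not merely their number: by Definition~\ref{def:compatibility}, writing $\xi = w_0 0 w_1 0 \cdots$ and $\eta = \tilde w_0 0 \tilde w_1 0 \cdots$ with zero-free blocks satisfying $|w_k| = |\tilde w_k|$, every $\eta \sim \xi$ carries a zero precisely where $\xi$ does. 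Hence, setting $Z = \{ i \ge 0 : \xi_i = 0 \}$, we get the inclusion $[\xi]_\sim \subseteq \{ \eta \in X^\N : \eta_i = 0 \text{ for all } i \in Z \}$, and it suffices to bound the measure of the right-hand side.

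I would then split according to whether $Z$ is infinite or finite. If $Z$ is infinite, the events $\{\eta_i = 0\}$ for $i \in Z$ depend on pairwise distinct coordinates and so are $\mu$-independent, each of probability $1/d < 1$; therefore $\mu([\xi]_\sim) \le \prod_{i \in Z} \tfrac{1}{d} = 0$. If $Z$ is finite, then $\xi$, and with it every point of $[\xi]_\sim$, lies in $X^*(X \setminus \{0\})^\N$; since $\mu\bigl((X\setminus\{0\})^\N\bigr) = \lim_{n}\bigl(\tfrac{d-1}{d}\bigr)^n = 0$, Lemma~\ref{lem:measures} applied with $E = (X\setminus\{0\})^\N$ gives $\mu\bigl(X^*(X\setminus\{0\})^\N\bigr) = 0$, so again $\mu([\xi]_\sim) = 0$. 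In both cases the class is $\mu$-null.

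There is no serious obstacle in this argument; the one point I would stress is the reduction to the locations of the zeros. Once one observes that compatibility fixes these, the finer constraints on the blocks (their lengths and trailing runs of $d-1$) become irrelevant to the measure, which already vanishes. The conclusion then follows from the elementary infinite product (valid since $d \ge 2$) together with Lemma~\ref{lem:measures}, with no need to estimate the per-block probabilities explicitly.
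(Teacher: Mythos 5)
Your proof is correct and follows essentially the same route as the paper's: both reduce the isomorphism class to the compatibility class via Theorem~\ref{thm:rooted_isomorphism}, observe that compatibility forces the zeros to occur at the same positions, and then split into the finitely-many-zeros case (handled by $\mu\bigl(X^*(X\setminus\{0\})^\N\bigr)=0$) and the infinitely-many-zeros case (where the paper's iterated bound $\mu(C)\le d^{-k}\mu(C_k)$ is just your independent-coordinates product in disguise). Your explicit remark that the argument lives in the $d\ge 3$ regime matches the paper's standing assumption for this part of the section.
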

\begin{proof}
	Let $\xi \in X^\N$, and let $C = \Comp(\xi) \subset X^\N$ be its compatibility class. We will show $\mu(C) = 0$.

	Suppose first that $\xi$ has finitely many zeros. In that case, there exists $N \ge 0$, $w \in X^N$ and $\xi' \in (X\setminus\{0\})^\N$ such that $\xi = w \xi'$. Moreover, $C \subset X^N (X\setminus\{0\})^\N$, so
	\[
	\mu(C) \le \mu(X^N (X\setminus\{0\})^\N) = \mu((X\setminus\{0\})^\N) = 0.
	\]

	Assume that $\xi$ has infinitely many zeros, so there is an infinite sequence of words $(w_k)_k$, $w_k \in (X\setminus\{0\})^*$ such that $\xi = w_0 0 w_1 0 \dots$. Let $n_k = |w_k|$ and $C_k = \Comp(w_k 0 w_{k+1} 0 \dots)$. By Lemma~\ref{lem:same_zeros}, any compatible point must have the same zeros at the same positions, so we have
	\[
	\mu(C) \le \mu(X^{n_0}0C_1) = \frac{1}{d}\mu(C_1).
	\]
	If we iterate this inequality, we have that, for any $k \ge 0$,
	\[
	\mu(C) \le \frac{1}{d^k}\mu(C_k).
	\]
	In particular, for any $\varepsilon > 0$, by choosing $k$ so that $\frac{1}{d^k} \le \varepsilon$, $\mu(C) \le \frac{1}{d^k} \mu(C_k) \le \varepsilon$. Hence, $\mu(C) = 0$.
\end{proof}


\begin{corollary}
	All unrooted, unlabeled isomorphism classes have measure zero in $X^\N$.
\end{corollary}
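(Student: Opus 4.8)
The plan is to reduce the statement to Theorem~\ref{thm:measure_isomorphism_classes} by expressing every unrooted isomorphism class as the cofinality-saturation of a rooted one. Fix $\xi \in X^\N$. By Corollary~\ref{cor:unrooted_isomorphism}, a point $\eta$ has $\Gamma_\eta$ isomorphic to $\Gamma_\xi$ as unrooted graphs precisely when $\Cof(\eta)$ meets $\Comp(\xi)$, so the unrooted class of $\xi$ equals $\bigcup_{\eta' \in \Comp(\xi)} \Cof(\eta')$. Unwinding the definition of cofinality, $\eta$ lies in this union iff $\sigma^r(\eta) \in \sigma^r(\Comp(\xi))$ for some $r \ge 0$, which rewrites the unrooted class as the countable union $\bigcup_{r \ge 0} X^r \sigma^r(\Comp(\xi))$.

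First I would record that $\Comp(\xi)$ is null, which is exactly Theorem~\ref{thm:measure_isomorphism_classes}. The remaining work is to show that applying $\sigma^r$ and then prepending an arbitrary length-$r$ prefix keeps the set null. Prepending a prefix is harmless: attaching all words of a fixed length $r$ produces $d^r$ disjoint scaled copies whose measures sum back to the original, so $\mu(X^r E) = \mu(E)$ for any $E$, which is the very mechanism already used in Lemma~\ref{lem:measures}. Thus everything hinges on controlling the forward image $\sigma^r(\Comp(\xi))$.

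The key step, and the only subtle one, is that the shift $\sigma$ sends null sets to null sets, despite being non-injective. I would prove this by splitting after the first letter: for null $A$, write $A = \bigsqcup_{i \in X}(A \cap iX^\N)$, note $A \cap iX^\N = iC_i$ with $\sigma(A \cap iX^\N) = C_i$ and $\mu(iC_i) = \tfrac1d \mu(C_i)$, so each $C_i$ is null and $\sigma(A) = \bigcup_{i} C_i$ is a finite union of null sets. Iterating, $\sigma^r(\Comp(\xi))$ is null for every $r$, hence so is each $X^r\sigma^r(\Comp(\xi))$, and the unrooted class, being their countable union, has measure zero. The main obstacle to watch for is measurability: the forward image of a Borel set under the non-injective $\sigma$ is only analytic, so strictly speaking I would either invoke universal measurability of analytic sets or, matching the outer-measure style of the preceding proofs, run the entire argument with outer measure, where monotonicity and finite subadditivity make the bound for $\sigma(A)$ and the prefix estimate $\mu^*(X^r E) \le \mu^*(E)$ go through unchanged.
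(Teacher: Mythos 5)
Your proof is correct, but it takes a genuinely different route from the paper's. Both arguments exhibit the unrooted class as a countable union of null sets and reduce to Theorem~\ref{thm:measure_isomorphism_classes}, but the decompositions differ: the paper saturates over the group, setting $D=\bigcup_{g\in G}\Comp(g\xi)$ and observing that each term is itself a compatibility class, hence null directly by Theorem~\ref{thm:measure_isomorphism_classes}, so countability of $G$ finishes the proof in two lines. You instead saturate by cofinality depth, writing the class as $\bigcup_{r\ge0}X^r\sigma^r(\Comp(\xi))$, which obliges you to prove the auxiliary facts that $\sigma$ maps null sets to null sets and that $\mu(X^rE)=\mu(E)$. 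Both steps are fine, and your measurability worry is easily dispatched: $\Comp(\xi)$ is an intersection of conditions each depending on finitely many coordinates, hence closed, so $\sigma^r(\Comp(\xi))$ is compact and in particular Borel. What your version buys is that Theorem~\ref{thm:measure_isomorphism_classes} is invoked only once, at $\xi$ itself, and no appeal to the group action or to the identification $\Cof(\eta)=G\cdot\eta$ is needed; the paper's version is shorter because each translate $\Comp(g\xi)$ is already covered by the theorem.
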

\begin{proof}
	Let $\xi \in X^\N$, and let $\Comp(\xi) \subset X^\N$ be its compatibility class. Let also $D = \cup_{g \in G} \Comp(g \xi)$ be it so-called eventual compatibility class. After Corollary~\ref{cor:unrooted_isomorphism}, $\Gamma_\xi$ is isomorphic to $\Gamma_\eta$ iff $\eta \in D$. Because the action preserves the measure $\mu$ and $\mu(\Comp(\xi)) = 0$ after Theorem~\ref{thm:measure_isomorphism_classes}, we have
	\[
	\mu(D) \le \sum_{g \in G}  \mu(\Comp(g\xi)) = \sum_{g \in G}  \mu(\Comp(\xi)) = 0.
	\]
\end{proof}

\section{Examples}
\label{sec:examples}

\subsection{Grigorchuk groups $G_\omega$}

Spinal groups were introduced as a generalization for Grigorchuk groups. As mentioned in Section 2, these correspond to $d=2$ and $m=2$. From our description we easily recover the construction of the Schreier graphs for Grigorchuk groups given in~\cite{BG00} for the first Griogrhuk group and in~\cite{Mat15} for all the groups $G_\omega$ with $d=2,m=2$. The graphs $\Xi$, $\Theta$ and $\Lambda_{\omega_n}$ are defined in Fig.~\ref{fig:blocks}, and there is an example of $\Gamma_n$ in Fig.~\ref{fig:Gamma3_Grig}.
	
\begin{figure}[h]
    \centering
    \begin{tikzpicture}
	    \pgfmathsetmacro{\r}{1.2}
	    
	    \begin{scope}[shift={(0,0)}]
		    \pic {Loop={(0,0)}{0}{\r}{blue}};
		    \pic {Loop={(0,0)}{90}{\r}{red}};
		    \pic {Loop={(0,0)}{180}{\r}{green}};
		    \draw (-0.9*\r, 0) node {\tiny{$c$}};
		    \draw (0, 0.9*\r) node {\tiny{$b$}};
		    \draw (0, -0.9*\r) node {\tiny{$d$}};
	    \end{scope}
	    
	    \begin{scope}[shift={(0,0)}]
		    \pic {Line={(0,0)}{0}{\r}{black}};
		    \draw (0.5*\r,0) node[above] {\tiny{$a$}};
	    \end{scope}
	    
	    \begin{scope}[shift={(\r,0)}]
		    \pic {Loop={(0,0)}{0}{\r}{green}};
		    \pic {Loop={(\r,0)}{0}{\r}{green}};
		    \draw[color=blue] (0,0) .. controls (0.5*\r, 0.2*\r) .. (\r,0);
		    \draw[color=red] (0,0) .. controls (0.5*\r, -0.2*\r) .. (\r,0);

		    \draw (0.5*\r, 0.1*\r) node[above] {\tiny{$b$}};
		    \draw (0.5*\r, -0.1*\r) node[below] {\tiny{$c$}};
		    \draw (0, 0.9*\r) node {\tiny{$d$}};
		    \draw (\r, 0.9*\r) node {\tiny{$d$}};
	    \end{scope}
	    
	    \begin{scope}[shift={(2*\r,0)}]
		    \pic {Line={(0,0)}{0}{\r}{black}};
		    \draw (0.5*\r,0) node[above] {\tiny{$a$}};
	    \end{scope}
	    
	    \begin{scope}[shift={(3*\r,0)}]
		    \pic {Loop={(0,0)}{0}{\r}{red}};
		    \pic {Loop={(\r,0)}{0}{\r}{red}};
		    \draw[color=blue] (0,0) .. controls (0.5*\r, 0.2*\r) .. (\r,0);
		    \draw[color=green] (0,0) .. controls (0.5*\r, -0.2*\r) .. (\r,0);

		    \draw (0.5*\r, 0.1*\r) node[above] {\tiny{$b$}};
		    \draw (0.5*\r, -0.1*\r) node[below] {\tiny{$d$}};
		    \draw (0, 0.9*\r) node {\tiny{$c$}};
		    \draw (\r, 0.9*\r) node {\tiny{$c$}};
	    \end{scope}
    
	    \begin{scope}[shift={(4*\r,0)}]
		    \pic {Line={(0,0)}{0}{\r}{black}};
		    \draw (0.5*\r,0) node[above] {\tiny{$a$}};
	    \end{scope}
	    
	    \begin{scope}[shift={(5*\r,0)}]
		    \pic {Loop={(0,0)}{0}{\r}{green}};
		    \pic {Loop={(\r,0)}{0}{\r}{green}};
		    \draw[color=blue] (0,0) .. controls (0.5*\r, 0.2*\r) .. (\r,0);
		    \draw[color=red] (0,0) .. controls (0.5*\r, -0.2*\r) .. (\r,0);

		    \draw (0.5*\r, 0.1*\r) node[above] {\tiny{$b$}};
		    \draw (0.5*\r, -0.1*\r) node[below] {\tiny{$c$}};
		    \draw (0, 0.9*\r) node {\tiny{$d$}};
		    \draw (\r, 0.9*\r) node {\tiny{$d$}};
	    \end{scope}
	    
	    \begin{scope}[shift={(6*\r,0)}]
		    \pic {Line={(0,0)}{0}{\r}{black}};
		    \draw (0.5*\r,0) node[above] {\tiny{$a$}};
	    \end{scope}
	    
	    \begin{scope}[shift={(7*\r,0)}]
		    \pic {Loop={(0,0)}{0}{\r}{blue}};
		    \pic {Loop={(0,0)}{-90}{\r}{red}};
		    \pic {Loop={(0,0)}{180}{\r}{green}};
		    \draw (0.9*\r, 0) node {\tiny{$c$}};
		    \draw (0, 0.9*\r) node {\tiny{$b$}};
		    \draw (0, -0.9*\r) node {\tiny{$d$}};
	    \end{scope}
	    
	    \draw (0,0) node[below] {\tiny{$110$}};
	    \draw (\r,0) node[below] {\tiny{$010$}};
	    \draw (2*\r,0) node[below] {\tiny{$000$}};
	    \draw (3*\r,0) node[below] {\tiny{$100$}};
	    \draw (4*\r,0) node[below] {\tiny{$101$}};
	    \draw (5*\r,0) node[below] {\tiny{$001$}};
	    \draw (6*\r,0) node[below] {\tiny{$011$}};
	    \draw (7*\r,0) node[below] {\tiny{$111$}};
		    
	\end{tikzpicture}
  \caption{$\Gamma_3$ for Grigorchuk's group.}
  \label{fig:Gamma3_Grig}
  \end{figure}
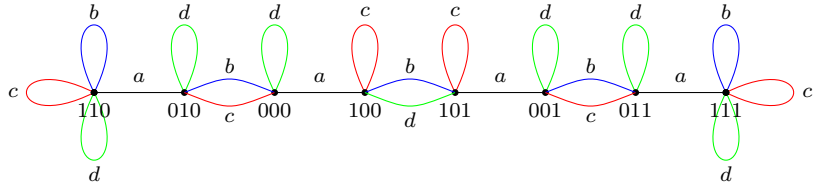

\subsection{The Fabrykowski-Gupta group}

The Fabrykowski-Gupta group is the simplest nontrivial example with $d \geq 3$. It is the spinal group $G_\omega$ defined by $d=3$, $m=1$ (so $A = \{1, a, a^2\}$ and $B = \{1, b, b^2\}$) and the constant sequence $\omega$ where all the epimorphisms equal that mapping $b$ to $a$. The graphs $\Xi$, $\Theta$ and $\Lambda = \Lambda_{\pi_b}$ are given in Fig.~\ref{fig:blocks}, and there is an example of a $\Gamma_n$ in Fig.~\ref{fig:Gamma3_FG}, which corresponds to the decription given in~\cite{BG00}.

\begin{figure}[ht]
    \centering
    \begin{tikzpicture}
	  \pgfmathsetmacro{\r}{1}
	  
	  \tikzset{
	  	pics/Xi/.style n args={3}{
	  		code = {
	  			\begin{scope}[shift={#1}]
		  			\pic {Loop={(0,0)}{60+#2}{0.5*\r}{blue}};
		  			\pic {Loop={(0,0)}{-60+#2}{0.5*\r}{red}};
		  			\draw (120 + #2: 0.35*\r) node {\tiny{$b$}};
		  			\draw (60 + #2: 0.35*\r) node {\tiny{$b^2$}};
		  			\draw[fill=black] (0,0) circle (\r/32);
		  			\draw (0+#2: 0.2*\r) node {\tiny{$#3$}};
	  			\end{scope}
	  		}	
	  	}
	  }
	  
	  \tikzset{
	  	pics/Theta/.style n args={2}{
	  		code = {
	  			\begin{scope}[shift={#1}, rotate=#2]		  			
		  			\draw (0:0) edge[midarrow>] (0: \r);
		  			\draw (0:\r) edge[midarrow>] (60: \r);
		  			\draw (60:\r) edge[midarrow>] (0:0);
		  			
		  			\draw (0.5*\r, 0.1*\r) node {\tiny{$a$}};
		  			\draw (0.7*\r, 0.37*\r) node {\tiny{$a$}};
		  			\draw (0.3*\r, 0.37*\r) node {\tiny{$a$}};
	  			\end{scope}
	  		}	
	  	}
	  }
	  
	  \tikzset{
	  	pics/Lambda/.style n args={2}{
	  		code = {
	  			\begin{scope}[shift={#1}, rotate=#2]
		  			\pic {Triangle={(0,0)}{0 + #2}{\r}{blue}};
		  			
		  			\draw[color=blue] (0:0) edge[midarrow>] (0: \r);
		  			\draw[color=blue] (0:\r) edge[midarrow>] (60: \r);
		  			\draw[color=blue] (60:\r) edge[midarrow>] (0:0);
		  			
		  			\draw (0.5*\r, 0.15*\r) node {\tiny{$b$}};
		  			\draw (0.65*\r, 0.4*\r) node {\tiny{$b$}};
		  			\draw (0.35*\r, 0.4*\r) node {\tiny{$b$}};
		  			
		  			\draw[color=red] (0:0) edge[bend left, midarrow>] (60: \r);
		  			\draw[color=red] (60:\r) edge[bend left, midarrow>] (0: \r);
		  			\draw[color=red] (0:\r) edge[bend left, midarrow>] (0:0);
		  			
		  			\draw (0.5*\r, -0.3*\r) node {\tiny{$b^2$}};
		  			\draw (\r, 0.6*\r) node {\tiny{$b^2$}};
		  			\draw (0, 0.6*\r) node {\tiny{$b^2$}};
	  			\end{scope}
	  		}	
	  	}
	  }

	  \tikzset{
	  	pics/Gamma2/.style n args={3}{
	  		code = {
	  			\begin{scope}[shift={#1}, rotate=#2]
		  			\pic {Lambda={(0,0)}{0 + #2}};
		  			
		  			\begin{scope}[shift={(0,0)}]
			  			\draw (0,-0.2*\r) node {\tiny{$02#3$}};
			  			\pic {Theta={(0+#2:0)}{180+#2}};
			  			\pic {Xi={(180+#2:\r)}{60+#2}{12#3}};
			  			\pic {Xi={(240+#2:\r)}{180+#2}{22#3}};
		  			\end{scope}
		  			
		  			\begin{scope}[shift={(60:\r)}]
			  			\draw (0,-0.2*\r) node {\tiny{$01#3$}};
			  			\pic {Theta={(0+#2:0)}{60+#2}};
			  			\pic {Xi={(60+#2:\r)}{300+#2}{11#3}};
			  			\pic {Xi={(120+#2:\r)}{60+#2}{21#3}};
		  			\end{scope}
		  			
		  			\begin{scope}[shift={(0:\r)}]
			  			\draw (0,-0.2*\r) node {\tiny{$00#3$}};
			  			\pic {Theta={(0+#2:0)}{300+#2}};
			  			\pic {Xi={(300+#2:\r)}{180+#2}{10#3}};
		  			\end{scope}
	  			\end{scope}
  			}	
  		}
	  }
	  
	  \pic {Lambda={(0,0)}{0}};
	  
	  \begin{scope}[shift={(0 : 0)}]
		  \draw (270 : 0.2*\r) node {\tiny{$201$}};
		  \begin{scope}[shift={(180 : 2*\r)}]
			  \pic {Gamma2={(0, 0)}{0}{1}};
		  \end{scope}
	  \end{scope}
	  
	  \begin{scope}[shift={(60 : \r)}]
		  \draw (150 : 0.2*\r) node {\tiny{$200$}};
		  \begin{scope}[shift={(60 : 2*\r)}]
			  \pic {Gamma2={(0, 0)}{240}{0}};
		  \end{scope}
  	  \end{scope}
  	  
  	  \begin{scope}[shift={(0 : \r)}]
	  	  \draw (30 : 0.2*\r) node {\tiny{$202$}};
	  	  \begin{scope}[shift={(300 : 2*\r)}]
	  	  \pic {Gamma2={(0, 0)}{120}{2}};
	  	  \end{scope}
  	  \end{scope}
      
	\end{tikzpicture}
  \caption{$\Gamma_3$ for the Fabrykowski-Gupta group.}
  \label{fig:Gamma3_FG}
  \end{figure}
	
We provide two examples of infinite Schreier graphs, of the the points $2^\N$ and $0^\N$. The former is a one-ended graph, and the latter is two-ended. In Fig.~\ref{fig:2} and~\ref{fig:0}, vertex labels only display a prefix, so dots must be replaced with the appropriate shift of $\xi$. Any edge labeled by $\Gamma_n$ is actually a subgraph $\Delta_{\xi'}^n$, which is a copy of $\Gamma_n$.

\begin{figure}[ht]
	\centering
	\begin{tikzpicture}
		\pgfmathsetmacro{\r}{1}

		\pic {Loop={(0,0)}{90}{\r}{blue}};
		\draw (0,0) node[below] {\tiny{$\xi$}};
		\pic {Triangle={(0,0)}{0}{\r}{black}};
		\begin{scope}[shift={(60:\r)}]
			\pic {Loop={(0,0)}{0}{\r}{blue}};
			\draw (0,0) node[right] {\tiny{$1...$}};
		\end{scope}
		
		\pic {Triangle={(\r,0)}{0}{\r}{blue}};
		\draw (\r,0) node[below] {\tiny{$0...$}};
		\begin{scope}[shift={(\r,0)}]
			\begin{scope}[shift={(60:\r)}]
				\pic {Line={(0,0)}{90}{\r}{black}};
				\draw (0,0.5*\r) node[left] {\tiny{$\Gamma_1$}};
				\draw (0,0) node[right] {\tiny{$01...$}};
				\pic {Loop={(0,\r)}{0}{\r}{blue}};
				\draw (0,\r) node[right] {\tiny{$21...$}};
			\end{scope}
		\end{scope}
		\pic {Line={(2*\r,0)}{0}{\r}{black}};
		\draw (2.5*\r,0) node[above] {\tiny{$\Gamma_1$}};
		\draw (2*\r,0) node[below] {\tiny{$00...$}};
		
		\pic {Triangle={(3*\r,0)}{0}{\r}{blue}};
		\draw (3*\r,0) node[below] {\tiny{$20...$}};
		\begin{scope}[shift={(3*\r,0)}]
			\begin{scope}[shift={(60:\r)}]
				\pic {Line={(0,0)}{90}{\r}{black}};
				\draw (0,0.5*\r) node[left] {\tiny{$\Gamma_2$}};
				\draw (0,0) node[right] {\tiny{$201...$}};
				\pic {Loop={(0,\r)}{0}{\r}{blue}};
				\draw (0,\r) node[right] {\tiny{$221...$}};
			\end{scope}
		\end{scope}
		\pic {Line={(4*\r,0)}{0}{\r}{black}};
		\draw (4.5*\r,0) node[above] {\tiny{$\Gamma_2$}};
		\draw (4*\r,0) node[below] {\tiny{$200...$}};
		
		\pic {Triangle={(5*\r,0)}{0}{\r}{blue}};
		\draw (5*\r,0) node[below] {\tiny{$220...$}};
		\begin{scope}[shift={(5*\r,0)}]
		\begin{scope}[shift={(60:\r)}]
		\pic {Line={(0,0)}{90}{\r}{black}};
		\draw (0,0.5*\r) node[left] {\tiny{$\Gamma_3$}};
		\draw (0,0) node[right] {\tiny{$2201...$}};
		\pic {Loop={(0,\r)}{0}{\r}{blue}};
		\draw (0,\r) node[right] {\tiny{$2221...$}};
		\end{scope}
		\end{scope}
		\pic {Line={(6*\r,0)}{0}{\r/2}{black}};
		\draw (6.5*\r,0) node[above] {\tiny{$\Gamma_3$}};
		\draw (6*\r, 0) node[below] {\tiny{$2200...$}};
		\draw[densely dotted] (6.5*\r,0) -- (7*\r, 0);

	\end{tikzpicture}
  
	\caption{Sketch of the Schreier graph for $\xi = 2^\N$.}
  
	\label{fig:2}
\end{figure}
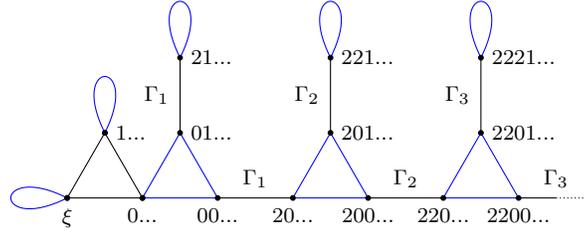
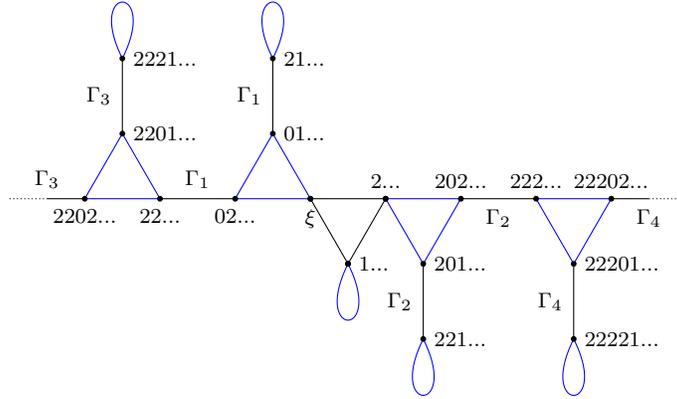
\begin{figure}[ht]
	\centering
	\begin{tikzpicture}
		\pgfmathsetmacro{\r}{1}

		\draw (0,0) node[below] {\tiny{$\xi$}};
		\pic {Triangle={(0,0)}{-60}{\r}{black}};
		\begin{scope}[shift={(-60:\r)}]
			\pic {Loop={(0,0)}{180}{\r}{blue}};
			\draw (0,0) node[right] {\tiny{$1...$}};
		\end{scope}
		
		\pic {Triangle={(\r,0)}{-60}{\r}{blue}};
		\draw (\r,0) node[above] {\tiny{$2...$}};
		\begin{scope}[shift={(\r,0)}]
			\begin{scope}[shift={(-60:\r)}]
				\pic {Line={(0,0)}{-90}{\r}{black}};
				\draw (0,-0.5*\r) node[left] {\tiny{$\Gamma_2$}};
				\draw (0,0) node[right] {\tiny{$201...$}};
				\pic {Loop={(0,-\r)}{180}{\r}{blue}};
				\draw (0,-\r) node[right] {\tiny{$221...$}};
			\end{scope}
		\end{scope}
		\pic {Line={(2*\r,0)}{0}{\r}{black}};
		\draw (2.5*\r,0) node[below] {\tiny{$\Gamma_2$}};
		\draw (2*\r,0) node[above] {\tiny{$202...$}};
		
		\pic {Triangle={(3*\r,0)}{-60}{\r}{blue}};
		\draw (3*\r,0) node[above] {\tiny{$222...$}};
		\begin{scope}[shift={(3*\r,0)}]
			\begin{scope}[shift={(-60:\r)}]
				\pic {Line={(0,0)}{-90}{\r}{black}};
				\draw (0,-0.5*\r) node[left] {\tiny{$\Gamma_4$}};
				\draw (0,0) node[right] {\tiny{$22201...$}};
				\pic {Loop={(0,-\r)}{180}{\r}{blue}};
				\draw (0,-\r) node[right] {\tiny{$22221...$}};
			\end{scope}
		\end{scope}
		\pic {Line={(4*\r,0)}{0}{0.5*\r}{black}};
		\draw (4.5*\r,0) node[below] {\tiny{$\Gamma_4$}};
		\draw (4*\r,0) node[above] {\tiny{$22202...$}};
		\draw[densely dotted] (4.5*\r,0) -- (5*\r, 0);

		
		\pic {Triangle={(-\r,0)}{0}{\r}{blue}};
		\begin{scope}[shift={(-\r,0)}]
			\begin{scope}[shift={(60:\r)}]
				\pic {Line={(0,0)}{90}{\r}{black}};
				\draw (0,0.5*\r) node[left] {\tiny{$\Gamma_1$}};
				\draw (0,0) node[right] {\tiny{$01...$}};
				\pic {Loop={(0,\r)}{0}{\r}{blue}};
				\draw (0,\r) node[right] {\tiny{$21...$}};
			\end{scope}
		\end{scope}
		\pic {Line={(-2*\r,0)}{0}{\r}{black}};
		\draw (-1.5*\r,0) node[above] {\tiny{$\Gamma_1$}};
		\draw (-\r,0) node[below] {\tiny{$02...$}};
		
		\pic {Triangle={(-3*\r,0)}{0}{\r}{blue}};
		\draw (-2*\r,0) node[below] {\tiny{$22...$}};
		\begin{scope}[shift={(-3*\r,0)}]
			\begin{scope}[shift={(60:\r)}]
				\pic {Line={(0,0)}{90}{\r}{black}};
				\draw (0,0.5*\r) node[left] {\tiny{$\Gamma_3$}};
				\draw (0,0) node[right] {\tiny{$2201...$}};
				\pic {Loop={(0,\r)}{0}{\r}{blue}};
				\draw (0,\r) node[right] {\tiny{$2221...$}};
			\end{scope}
		\end{scope}
		\pic {Line={(-3.5*\r,0)}{0}{0.5*\r}{black}};
		\draw (-3.5*\r,0) node[above] {\tiny{$\Gamma_3$}};
		\draw (-3*\r,0) node[below] {\tiny{$2202...$}};
		\draw[densely dotted] (-4*\r,0) -- (-3.5*\r, 0);
		
	\end{tikzpicture}
  
	\caption{Sketch of the Schreier graph for $\xi = 0^\N$.}
  
	\label{fig:0}
\end{figure}

\subsection{Grigorchuk $p$-groups}

This family was an attempt to generalize the definition of Grigorchuk $2$-groups to bigger primes $p$ in~\cite{Gri85}. We can realize this family as spinal groups by setting $d = p$ and $m = 2$, so that $B= \langle b, c \rangle$, and taking a sequence of epimorphisms in $\{ \pi_0, \dots, \pi_{d-1}, \pi\} \subset \Epi(B, A)$, where $\pi_i(b) = a$, $\pi_i(c) = a^i$ and $\pi(b) = 1$, $\pi(c) = a$. $G_\omega$ is the spinal group defined by this sequence, provided that it satisfies the kernel condition.

\subsection{\v{S}uni\'{c} groups}

Following~\cite{Sun07}, choose a prime $p$, $m \ge 1$ and a polynomial $f \in (\Z/p\Z[x])$ of degree $m$, and set $A = \Z/p\Z$ and $B = (\Z/p\Z)^m$. Then define a group $G_{p, f}$ acting on the tree $T_p$. Equivalently, one can instead of $f$ choose $p$, $m$, $\alpha \in \Epi(B, A)$ and $\rho \in \Aut(B)$ to define the group.
The group $G_{p, f}$ is in fact the spinal group defined by $d = p$,  $m$ and $\omega = \omega_0 \omega_1 \dots$ given by $\omega_i = \alpha\rho^i$, for all $i \ge 0$, whenever the action is faithful. In this case, the sequence is always periodic. All the examples mentioned above in this Section are in the class of \v{S}uni\'{c} groups.

\begin{figure}[ht]
	\centering
	\begin{tikzpicture}
		\pgfmathsetmacro{\r}{1.5}
		\pgfmathsetmacro{\t}{18}
		
		\tikzset{
			pics/Loop/.style n args={3}{
				code = {
					\begin{scope}[rotate=#1]
					\draw[color=#3] (0,0) .. controls (#2/2, #2) and (-#2/2, #2) .. (0,0);
					\draw[fill=black] (0,0) circle (#2/16);
					\end{scope}
				}	
			}
		}
		
		\foreach \i in {0,...,4}{
			\draw[color=blue] (90 + 72*\i : \r) edge[
				out = 180 + 72*\i + \t,
				in= 72 + 72*\i - \t,
				midarrow>
			] (90 + 72 + 72*\i : \r);
			
			\draw[color=red] (90 + 72*\i : \r) edge[
				out = 180 + 72 + 72*\i - \t/2,
				in= 72 + 72*\i + \t/2,
				midarrow>
			] (90 + 2*72 + 72*\i : \r);
			
			\draw[color=green] (90 + 72*\i : \r) edge[
			out = 4*72 + 72*\i - \t/2,
			in= 72 + 36 + 72*\i + \t/2,
			midarrow>
			] (90 + 3*72 + 72*\i : \r);
			
			\draw[color=orange] (90 + 72*\i : \r) edge[
			out = -72 + 72*\i + \t,
			in= 180 + 72*\i - \t,
			midarrow>
			] (90 + 4*72 + 72*\i : \r);
		}
		
		\foreach \j in {0,...,4}{
			\begin{scope}[shift={(90 + 72*\j:\r)}, shift={(54 + 72*\j:\r/2)}]
				\foreach \i in {0,...,4}{
					\draw (90 + 72*\i : \r/2) edge[midarrow>] (90 + 72 + 72*\i : \r/2);
					\draw (90 + 72*\i : \r/2) edge[midarrow>] (90 + 2*72 + 72*\i : \r/2);
					\draw (90 + 72*\i : \r/2) edge[midarrow>] (90 + 3*72 + 72*\i : \r/2);
					\draw (90 + 72*\i : \r/2) edge[midarrow>] (90 + 4*72 + 72*\i : \r/2);
					\ifthenelse{\i = \j}{}{
						\begin{scope}[shift={(90 + 2*72 + 72*\i : \r/2)}]
							\pic {Loop={2*72 + 72*\i + 2*\t}{\r/4}{blue}};
							\pic {Loop={2*72 + 72*\i + 2*\t/3}{\r/4}{red}};
							\pic {Loop={2*72 + 72*\i - 2*\t/3}{\r/4}{green}};
							\pic {Loop={2*72 + 72*\i - 2*\t}{\r/4}{orange}};
						\end{scope}
					}
				}
				\draw[fill=black] (90 + 2*72 + 72*\j: \r/2) circle (\r/64);
			\end{scope}
		}

	\end{tikzpicture}

	\label{fig:Gamma2_p5m1}  
	\caption{$\Gamma_2$ for the group defined by $p=5$, $m=1$ and a constant sequence of epimorphisms. Colored edges are labeled by powers of $b$, while black ones are labeled by powers of $a$.}
\end{figure}



\begin{proposition}
	Let $G_\omega$ be a spinal group. Then, $G_\omega$ is self-similar iff $G_\omega$ belongs to \v{S}uni\'{c}'s family. Equivalently, iff for every $n \ge 0$, $\omega_n = \omega_0 \rho^n$, for some $\rho \in \Aut(B)$.
\end{proposition}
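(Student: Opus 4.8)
The plan is to first turn self-similarity into a condition on the sections of the generators, then prove the two implications, the forward one being a short computation and the converse being the real content.

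\textit{Reduction.} The standard criterion says that a group $\langle S\rangle$ is self-similar as soon as every first-level section $s|_i$ of every $s\in S$ lies in the group: the cocycle identity $(uv)|_i = u|_{v(i)}\,v|_i$ propagates this to all sections. For $G=G_\omega$ the generators are $a$, whose sections are all trivial, and the $b_\omega$, whose decomposition $b_\omega = (\omega_0(b),1,\dots,1,b_{\sigma\omega})$ shows that the only nontrivial sections of generators are the elements $\omega_0(b)\in A\subseteq G_\omega$ and the shifted spinal automorphisms $b_{\sigma\omega}$. Hence $G_\omega$ is self-similar if and only if $b_{\sigma\omega}\in G_\omega$ for every $b\in B$, i.e. if and only if $B_{\sigma\omega}\subseteq G_\omega$. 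Iterating and using that $G_\omega$ is then closed under sections, this also forces $B_{\sigma^n\omega}\subseteq G_\omega$ for all $n$, which I will use below.

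\textit{Šunić $\Rightarrow$ self-similar.} Suppose $\omega_n=\omega_0\rho^n$ for some $\rho\in\Aut(B)$. Reading off the action at the vertex $(d-1)^r0$, the automorphism $b_{\sigma\omega}$ acts there as $(\sigma\omega)_r(b)=\omega_{r+1}(b)=\omega_0\rho^{r+1}(b)=\omega_r(\rho(b))$, which is exactly how $\rho(b)_\omega$ acts. Thus $b_{\sigma\omega}=\rho(b)_\omega$ for every $b$, so $B_{\sigma\omega}=B_\omega\subseteq G_\omega$ and self-similarity follows from the reduction.

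\textit{Self-similar $\Rightarrow$ Šunić.} Assume $B_{\sigma\omega}\subseteq G_\omega$. Faithfulness of the $B$-action (the defining condition $\bigcap_{j\ge i}\Ker\omega_j=1$) makes both $b\mapsto b_\omega$ and $b\mapsto b_{\sigma\omega}$ injective homomorphisms $B\hookrightarrow\Aut(T_d)$. The key step I would prove is the sharper containment $B_{\sigma\omega}\subseteq B_\omega$; granting it, $\rho:=(\,\cdot_\omega)^{-1}\circ(\,\cdot_{\sigma\omega}):B\to B$ is a well-defined injective endomorphism of the finite group $B$, hence an automorphism, and the identity $b_{\sigma\omega}=\rho(b)_\omega$ unwinds, vertex by vertex along $(d-1)^r0$, to $\omega_{r+1}(b)=\omega_r(\rho(b))$, that is $\omega_{r+1}=\omega_r\circ\rho$, giving $\omega_n=\omega_0\rho^n$. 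To produce $\rho$ intrinsically I would fix the spine end $(d-1)^\N$, set $P:=\Stab_{G_\omega}((d-1)^\N)$, and for $g\in P$ let $c_r(g)\in A\cong\Zd$ be the first-level action of the section $g|_{(d-1)^r0}$; since $P$ fixes each such vertex, every $c_r$ is a homomorphism, $c_r(b_\omega)=\omega_r(b)$, and $\gamma:=(c_r)_r$ carries $B_\omega$ isomorphically onto $V:=\{(\omega_r(b))_r:b\in B\}$, while $\gamma(b_{\sigma\omega})=(\omega_{r+1}(b))_r$ is the shift of $\gamma(b_\omega)$.

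\textit{Where the difficulty sits.} The crux is the rigidity $\gamma(P)=V$: once it holds, $B_{\sigma\omega}\subseteq P$ forces the shift to preserve $V$, and transporting it through $B\cong V$ yields exactly the endomorphism $\rho$ (injective by $\bigcap_{j\ge1}\Ker\omega_j=1$), so $B_{\sigma\omega}\subseteq B_\omega$ as required. Via the recursion $P=\{g\in\Stab_{G_\omega}(X):g|_{d-1}\in\Stab_{G_{\sigma\omega}}((d-1)^\N)\}$ with $c_0(g)=\alpha_0(g|_0)$ and $c_{r+1}(g)=c_r(g|_{d-1})$ (where $\alpha_0$ is the first-level action), the inclusion $\gamma(P)\subseteq V$ reduces level by level to the assertion that an element of $G_\omega$ trivial on the last subtree cannot carry a nontrivial first-level action in its section on the subtree $0$ — i.e. that the first-level action cannot hide in the subtrees off the spine. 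This is the step I expect to be hardest; I would establish it from the branch structure of $G_\omega$ (branch except in the case $d=2,m=1$, where $\omega$ is forced constant and the statement is trivial, by \cite{BGS03}) and its rigid-stabilizer projections, or alternatively by a direct peeling argument: choosing $c$ with $\omega_0(c)=\omega_1(b)$ makes $b_{\sigma\omega}c_\omega^{-1}$ supported on the last subtree and equal to $b_{\sigma^2\omega}c_{\sigma\omega}^{-1}$ there, after which one iterates down the spine and uses $\bigcap_{j\ge i}\Ker\omega_j=1$ to force the accumulated discrepancy to vanish in the limit. Either route converts the single membership $b_{\sigma\omega}\in G_\omega$ into the uniform relation $\omega_{r+1}=\omega_r\rho$.
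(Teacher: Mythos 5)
Your proof follows essentially the same route as the paper: both directions reduce to the identity $b_{\sigma\omega}=\rho(b)_\omega$, i.e.\ $\omega_{r+1}=\omega_r\circ\rho$, with $\rho$ forced to be a well-defined automorphism of $B$ by the kernel condition. The only place you diverge is that you explicitly flag the containment $B_{\sigma\omega}\subseteq B_\omega$ (as opposed to the mere $B_{\sigma\omega}\subseteq G_\omega$ that self-similarity gives) as the step needing proof and sketch two routes to it, whereas the paper simply asserts that the section $b|_{d-1}$ equals some $b'\in B$; your sketch is reasonable but, like the paper, you do not carry that step out in full.
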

\begin{proof}
	It is clear from the definition that any group in \v{S}uni\'{c}'s family is self-similar. If $G_\omega$ is a self-similar spinal group, then for every $b\in B$ there must exist some $b' \in B$ such that $b' = b|_{d-1}$, so $\omega_{n+1}(b) = \omega_n(b')$ for every $n \ge 0$. If there was $b''$ for which this is also true, then $b'(b'')^{-1}$ would be in $\Ker(\omega_n)$ for every $n \ge 0$, which would violate the kernel condition. So $b'$ is unique, and we can define an endomorphism $\rho: B \to B$ as $\rho(b) = b'$. If $1 \neq b \in \Ker(\rho)$, then $\omega_n(b) = 1$ for every $n \ge 1$, which again would violate the kernel condition, so $\rho$ is an automorphism. If we define $\alpha$ to be $\omega_0$, we are in the setting of a group in \v{S}uni\'{c}'s family, with the kernel condition making the action faithful.
\end{proof}

	
	
	

\subsection{Iterated monodromy groups}

According to \cite{Nek09}, 
	an automorphism group of a rooted tree $T$ is an iterated monodromy group of a post-critically finite backward iteration of polynomials if and only if it is generated by a dendroid set of automorphisms of T.

We will omit the details, but it can be proven that any spinal group $G_\omega$ for which at most $m$ different $\Ker(\omega_i)$ occur, for all $i \ge 0$, admits a dendroid generating set.
This means that there are many iterated monodromy groups in the spinal family, self-similar or not. A particular case of iterated monodromy groups of a sequence of polynomials is when that sequence is constant equal to $f$, and so we call the iterated monodromy group $IMG(f)$. In our setting, this happens only when $m = 1$. Further details can be found in~\cite{Nek09}.

\section*{Acknowledgments}

We gratefully acknowledge support of the Swiss Natonal Science Foundation. T.N.'s work was also supported by the grant No 14.W03.31.0030 of the government of RF.

\bibliography{my.bib}{}

\begin{thebibliography}{10}

\bibitem{AGV14}
Mikl\'{o}s Ab\'{e}rt, Yair Glasner, and B\'{a}lint Vir\'{a}g.
\newblock Kesten's theorem for invariant random subgroups.
\newblock {\em Duke Math. J.}, 163(3):465--488, 2014.

\bibitem{AAV13}
Gideon Amir, Omer Angel, and B\'{a}lint Vir\'{a}g.
\newblock Amenability of linear-activity automaton groups.
\newblock {\em J. Eur. Math. Soc. (JEMS)}, 15(3):705--730, 2013.

\bibitem{BG00}
L.~Bartholdi and R.~I. Grigorchuk.
\newblock On the spectrum of {H}ecke type operators related to some fractal
  groups.
\newblock {\em Tr. Mat. Inst. Steklova}, 231(Din. Sist., Avtom. i Beskon.
  Gruppy):5--45, 2000.

\bibitem{BGS03}
Laurent Bartholdi, Rostislav~I. Grigorchuk, and Zoran \v{S}uni\'{k}.
\newblock Branch groups.
\newblock In {\em Handbook of algebra, {V}ol. 3}, volume~3 of {\em Handb.
  Algebr.}, pages 989--1112. Elsevier/North-Holland, Amsterdam, 2003.

\bibitem{BP09}
Laurent Bartholdi and Floriane Pochon.
\newblock On growth and torsion of groups.
\newblock {\em Groups Geom. Dyn.}, 3(4):525--539, 2009.

\bibitem{BS01}
Laurent Bartholdi and Zoran Sunic.
\newblock On the word and period growth of some groups of tree automorphisms.
\newblock {\em Comm. Algebra}, 29(11):4923--4964, 2001.

\bibitem{Bon07}
Ievgen Bondarenko.
\newblock {\em Groups generated by bounded automata and their {S}chreier
  graphs}.
\newblock ProQuest LLC, Ann Arbor, MI, 2007.
\newblock Thesis (Ph.D.)--Texas A\&M University.

\bibitem{BDN16}
Ievgen Bondarenko, Daniele D'Angeli, and Tatiana Nagnibeda.
\newblock Ends of {S}chreier graphs and cut-points of limit spaces of
  self-similar groups,.
\newblock 2016.

\bibitem{BGJRT19}
Antoni Brzoska, Courtney George, Samantha Jarvis, Luke~G. Rogers, and Alexander
  Teplyaev.
\newblock Spectral properties of graphs associated to the basilica group,.
\newblock arXiv 1908.10505, 2019.

\bibitem{DGKPR}
D.~D'Angeli, Th. Godin, I.~Klimann, M.~Picantin, and E.~Rodaro.
\newblock Boundary action of automaton groups without singular points and wang
  tilings,.
\newblock arXiv 1604.07736, 2016.

\bibitem{DDMN10}
Daniele D'Angeli, Alfredo Donno, Michel Matter, and Tatiana Nagnibeda.
\newblock Schreier graphs of the {B}asilica group.
\newblock {\em J. Mod. Dyn.}, 4(1):167--205, 2010.

\bibitem{DDN11}
Daniele D'Angeli, Alfredo Donno, and Tatiana Nagnibeda.
\newblock Partition functions of the {I}sing model on some self-similar
  {S}chreier graphs.
\newblock In {\em Random walks, boundaries and spectra}, volume~64 of {\em
  Progr. Probab.}, pages 277--304. Birkh\"{a}user/Springer Basel AG, Basel,
  2011.

\bibitem{DDN12}
Daniele D'Angeli, Alfredo Donno, and Tatiana Nagnibeda.
\newblock Counting dimer coverings on self-similar {S}chreier graphs.
\newblock {\em European J. Combin.}, 33(7):1484--1513, 2012.

\bibitem{DG17}
Artem Dudko and Rostislav Grigorchuk.
\newblock On irreducibility and disjointness of {K}oopman and quasi-regular
  representations of weakly branch groups.
\newblock In {\em Modern theory of dynamical systems}, volume 692 of {\em
  Contemp. Math.}, pages 51--66. Amer. Math. Soc., Providence, RI, 2017.

\bibitem{FG85}
Jacek Fabrykowski and Narain Gupta.
\newblock On groups with sub-exponential growth functions.
\newblock {\em J. Indian Math. Soc. (N.S.)}, 49(3-4):249--256 (1987), 1985.

\bibitem{Fra18}
Mikolaj Fraczyk.
\newblock Kesten's theorem for uniformly recurrent subgroups,.
\newblock arXiv 1801.09132, 2018.

\bibitem{Fra19}
Dominik Francoeur.
\newblock On the subexponential growth of groups acting on rooted trees.
\newblock {\em Groups, Geometry and Dynamics}, 2019.

\bibitem{FG18}
Dominik Francoeur and Alejandra Garrido.
\newblock Maximal subgroups of groups of intermediate growth.
\newblock {\em Adv. Math.}, 340:1067--1107, 2018.

\bibitem{Gri84}
R.~I. Grigorchuk.
\newblock Degrees of growth of finitely generated groups and the theory of
  invariant means.
\newblock {\em Izv. Akad. Nauk SSSR Ser. Mat.}, 48(5):939--985, 1984.

\bibitem{Gri85}
R.~I. Grigorchuk.
\newblock Degrees of growth of {$p$}-groups and torsion-free groups.
\newblock {\em Mat. Sb. (N.S.)}, 126(168)(2):194--214, 286, 1985.

\bibitem{Gri11}
R.~I. Grigorchuk.
\newblock Some problems of the dynamics of group actions on rooted trees.
\newblock {\em Tr. Mat. Inst. Steklova}, 273(Sovremennye Problemy
  Matematiki):72--191, 2011.

\bibitem{GNS00}
R.~I. Grigorchuk, V.~V. Nekrashevich, and V.~I. Sushchanski\u{\i}.
\newblock Automata, dynamical systems, and groups.
\newblock {\em Tr. Mat. Inst. Steklova}, 231(Din. Sist., Avtom. i Beskon.
  Gruppy):134--214, 2000.

\bibitem{GLN18}
Rostislav Grigorchuk, Daniel Lenz, and Tatiana Nagnibeda.
\newblock Spectra of {S}chreier graphs of {G}rigorchuk's group and
  {S}chroedinger operators with aperiodic order.
\newblock {\em Math. Ann.}, 370(3-4):1607--1637, 2018.

\bibitem{GLNS19}
Rostislav Grigorchuk, Daniel Lenz, Tatiana Nagnibeda, and Daniel Sell.
\newblock Subshifts with leading sequences, uniformity of cocycles and spectra
  of {S}chreier graphs,.
\newblock arXiv 1906.01898, 2019.

\bibitem{GNP19}
Rostislav Grigorchuk, Tatiana Nagnibeda, and Aitor Pérez.
\newblock On spectra and spectral measures of {S}chreier and {C}ayley graphs.
\newblock 2019.

\bibitem{HY18}
Yair Hartman and Ariel Yadin.
\newblock Furstenberg entropy of intersectional invariant random subgroups.
\newblock {\em Compos. Math.}, 154(10):2239--2265, 2018.

\bibitem{JMMS18}
Kate Juschenko, Nicol\'{a}s Matte~Bon, Nicolas Monod, and Mikael de~la Salle.
\newblock Extensive amenability and an application to interval exchanges.
\newblock {\em Ergodic Theory Dynam. Systems}, 38(1):195--219, 2018.

\bibitem{JM13}
Kate Juschenko and Nicolas Monod.
\newblock Cantor systems, piecewise translations and simple amenable groups.
\newblock {\em Ann. of Math. (2)}, 178(2):775--787, 2013.

\bibitem{JZ18}
Kate Juschenko and Tianyi Zheng.
\newblock Infinitely supported {L}iouville measures of {S}chreier graphs.
\newblock {\em Groups Geom. Dyn.}, 12(3):911--918, 2018.

\bibitem{Mat15}
Nicol{\'a}s Matte~Bon.
\newblock Topological full groups of minimal subshifts with subgroups of
  intermediate growth.
\newblock {\em J. Mod. Dyn.}, 9:67--80, 2015.

\bibitem{Nek09}
Volodymyr Nekrashevych.
\newblock Combinatorics of polynomial iterations.
\newblock In {\em Complex dynamics}, pages 169--214. A K Peters, Wellesley, MA,
  2009.

\bibitem{Nek18}
Volodymyr Nekrashevych.
\newblock Palindromic subshifts and simple periodic groups of intermediate
  growth.
\newblock {\em Ann. of Math. (2)}, 187(3):667--719, 2018.

\bibitem{Sav15}
Dmytro Savchuk.
\newblock Schreier graphs of actions of {T}hompson's group {$F$} on the unit
  interval and on the {C}antor set.
\newblock {\em Geom. Dedicata}, 175:355--372, 2015.

\bibitem{Sun07}
Zoran Sunic.
\newblock Hausdorff dimension in a family of self-similar groups.
\newblock {\em Geom. Dedicata}, 124:213--236, 2007.

\bibitem{Vor12}
Yaroslav Vorobets.
\newblock Notes on the {S}chreier graphs of the {G}rigorchuk group.
\newblock 567:221--248, 2012.

\end{thebibliography}
\bibliographystyle{plain}

\end{document}